\newtheorem{theorem}{Theorem}[section]
\newtheorem{lemma}[theorem]{Lemma}
\newtheorem{proposition}[theorem]{Proposition}
\newtheorem{corollary}[theorem]{Corollary}
\newtheorem{definition}[theorem]{Definition}
\newtheorem{example}[theorem]{Example}
\numberwithin{equation}{section}
\DeclareMathOperator*{\esssup}{ess\,sup}
\newcommand{\vp}{\varphi}
\newcommand{\cli}{\mathcal{I}}
\newcommand{\clm}{\mathcal{M}}
\newcommand{\cls}{\mathcal{S}}
\newcommand{\D}{\mathbb{D}}
\newcommand{\T}{\mathbb{T}}
\newcommand{\raro}{\rightarrow}
\newcommand{\cam}{\mathscr{M}}
\begin{document}
\setcounter{page}{1}


\title[$M$-ideals]{$M$-ideals in $H^\infty(\mathbb{D})$}

\author[Deepak]{Deepak K. D.}
\address{Tata Institute of Fundamental Research, Centre For Applicable Mathematics, Post Bag No 6503, GKVK Post Office, Sharada Nagar, Chikkabommsandra, Bangalore 560065, Karnataka, India}
\email{dpk.dkd@gmail.com }

\author[Sarkar]{Jaydeb Sarkar}
\address{Indian Statistical Institute, Statistics and Mathematics Unit, 8th Mile, Mysore Road, Bangalore, 560059,
India}
\email{jay@isibang.ac.in, jaydeb@gmail.com}

\author[Siju]{Sreejith Siju}
\address{Indian Statistical Institute, Statistics and Mathematics Unit, 8th Mile, Mysore Road, Bangalore, 560059,
India}
\email{sreejithsiju5@gmail.com}

\subjclass[2010]{46H10, 30H10, 32A35, 47L20, 46J15, 58C10}

\keywords{$M$-ideals, $p$-sets, bounded analytic functions, Toeplitz operators, Hardy space, polydisc, inner functions, outer functions, ideals}

\begin{abstract}
This article intends to initiate an investigation into the structure of $M$-ideals in $H^\infty(\mathbb{D})$, where $H^\infty(\mathbb{D})$ denotes the Banach algebra of all bounded analytic functions on the open unit disc $\mathbb{D}$ in $\mathbb{C}$. We introduce the notion of analytic primes and prove that $M$-ideals in $H^\infty(\mathbb{D})$ are analytic primes. From Hilbert function space perspective, we additionally prove that $M$-ideals in $H^\infty(\mathbb{D})$ are dense in the Hardy space. We show that outer functions play a key role in representing singly generated closed ideals in $H^\infty(\mathbb{D})$ that are $M$-ideals. This is also relevant to $M$-ideals in $H^\infty(\mathbb{D})$ that are finitely generated closed ideals in $H^\infty(\mathbb{D})$. We analyze $p$-sets of $H^\infty(\mathbb{D})$ and their connection to the \v{S}ilov boundary of the maximal ideal space of $H^\infty(\mathbb{D})$. Some of our results apply to the polydisc. In addition to addressing questions regarding $M$-ideals, the results presented in this paper offer some new perspectives on bounded analytic functions.
\end{abstract}

\maketitle

\tableofcontents

\section{Introduction}\label{sec Intro}

The objective of this work is to integrate two classic concepts that are rich and independent areas of study: $M$-ideals in Banach spaces and the Banach algebra $H^\infty(\D)$. In 1972, Alfsen and Effros \cite{AE} introduced the concept of $M$-ideals (see Definition \ref{def M ideals}) as a means of extending the utility of closed two-sided ideals in $C^*$-algebras to Banach spaces. They proved, in particular, that a closed subspace of a real Banach space is an $M$-ideal if and only if the closed subspace satisfies the $3$-ball property. Beginning with this geometric implication, the concept of $M$-ideals evolved into one of the most useful tools in Banach space theory (specifically in the geometry of Banach spaces) and eventually became a subject in its own right. See the monograph by Harmand, Werner, and Werner \cite{HWW}, and also see the series of papers \cite{BO, CJ, Eff, GMYZ, Gilles, GL2, HL, AL, RS1, RS2, Y} and the references therein.

The second topic of this paper, $H^\infty(\D)$, is even more classic (see Bers \cite{Bers} and Kakutani \cite{Kakutani}, and the survey by Gamelin \cite{Gamelin1}). Recall that $H^\infty(\D)$ stands for the space of bounded analytic functions on the open unit disc $\D = \{z \in \mathbb{C}: |z| <1\}$, that is
\[
H^\infty(\D) = \{f \in \text{Hol} (\D): \sup_{z \in \D}|f(z)| < \infty\}.
\]
This is a commutative Banach algebra with respect to the pointwise product and under the supremum norm
\[
\|f\| = \sup_{z \in \D}|f(z)| \qquad (f \in H^\infty(\D)).
\]
$H^\infty(\D)$ is one of the most important Banach algebras that plays a key role in the theory of Hilbert function spaces and Banach spaces of analytic functions \cite{Hoffman}. And because of its analytic nature, it fits well between commutative $C^*$-algebras and generic commutative Banach algebras. For instance, if $\mathcal{M}(H^\infty(\D))$ denote the maximal ideal space of $H^\infty(\D)$, then the Gelfand map $\Gamma : H^\infty(\D) \raro C(\mathcal{M}(H^\infty(\D))$ defined by
\[
\Gamma(f) = \hat{f} \qquad (f \in H^\infty(\D)),
\]
is an isometry: a prototype feature of commutative $C^*$-algebras (see Section \ref{sec Prelim} for more details). However, despite being concrete and popular among commutative Banach algebras, this space raises fundamental problems for which there are no definitive solutions. The structure of the closed ideals in $H^\infty(\D)$, for example, is still obscure (however, see \cite{HH}, and also see
\cite{AZ, BZ, BK}). On one hand, this space has undergone extensive research in the context of Banach spaces of analytic functions (cf. \cite{Burgain, Iz 87, Kalton, Kalton 2, KW}). On the other hand, as far as our knowledge extends, $H^\infty(\D)$ has not been investigated in any way from the standpoint of $M$-ideals. And in this paper, this is exactly what we do.

Note that $H^\infty(\D)$ is also an example of a uniform algebra, and the classification of $M$-ideals in uniform algebras, which often involves maximal ideal spaces, is well known (see Theorem \ref{hd}). Unfortunately, since the maximal ideal space and the \v{S}ilov boundary of $H^\infty(\D)$ are hard to understand, uses of the general classification (which also includes peak sets and $p$-sets) on the uniform algebra $H^\infty(\D)$ only give abstract results. In fact, as we proceed further, along with $M$-ideals we will also study the delicate structure of $p$-sets of the maximal ideal space of $H^\infty(\D)$. 

We now go over the definition of $M$-ideals. Hereafter, all the Banach spaces are over the field of complex numbers. Furthermore, projections on a Banach space refer to bounded linear operators $P$ on that space such that
\[
P^2 = P.
\]

\begin{definition}\label{def M ideals}
A closed subspace $C$ of a Banach space $X$ is said to be an $M$-ideal in $X$ if there exists a projection $P: X^* \raro X^*$ such that
\[
\text{ran} P = C^{\perp},
\]
and
\[
\|Px^*\| + \|(I-P)x^*\| = \|x^*\|,
\]
for all $x^* \in X^*$.
\end{definition}

The projection $P$ above is commonly known as an \textit{$L$-projection}. Also, we denote by $C^{\perp}$ the annihilator of $C$:
\[
C^\perp = \{x^* \in X^*: x^*|_C \equiv 0\}.
\]

We now proceed directly to elucidating the main contribution of this paper. The issue of classifying or representing $M$-ideals in $H^\infty(\D)$ is highly intricate, and its complexity is comparable to the hierarchy of maximum ideal spaces in $H^\infty(\D)$. Indeed, our first result refers to the ``size" of $M$-ideals in $H^\infty(\D)$ (see Proposition \ref{prop M ideals larger}):

\begin{proposition}
Each maximal ideal in $H^\infty(\D)$ that corresponds to a complex homomorphism in the \v{S}ilov boundary of $H^\infty(\D)$ is an $M$-ideal in $H^\infty(\D)$.
\end{proposition}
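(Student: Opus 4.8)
The plan is to treat $H^\infty(\D)$ as a uniform algebra and invoke the general classification of $M$-ideals in uniform algebras, Theorem \ref{hd}, which identifies the $M$-ideals with the spaces $J_S = \{f \in H^\infty(\D) : \hat f|_S \equiv 0\}$ as $S$ runs over the $p$-sets of $H^\infty(\D)$. Writing $M_\phi$ for the maximal ideal attached to the complex homomorphism $\phi$, one has $M_\phi = \ker \phi = \{f \in H^\infty(\D) : \hat f(\phi) = 0\} = J_{\{\phi\}}$, so the entire problem collapses to a single assertion: the singleton $\{\phi\}$ is a $p$-set whenever $\phi$ lies in the \v{S}ilov boundary. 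Since for singletons being a $p$-set (an intersection of peak sets) is the same as being a strong boundary point, the proof reduces to exhibiting suitable peaking functions at $\phi$.

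To produce them I would first pass to the concrete description of the \v{S}ilov boundary of $H^\infty(\D)$ as the maximal ideal space $\mathcal{M}(L^\infty(\T))$ of $L^\infty(\T)$. This is a totally disconnected compact space whose clopen sets correspond bijectively to the measurable subsets of $\T$ modulo null sets, and on which $\phi$ acts as a $*$-homomorphism (a point evaluation) of the commutative $C^*$-algebra $L^\infty(\T) = C(\mathcal{M}(L^\infty(\T)))$. Given a basic clopen neighborhood $V$ of $\phi$, corresponding to a measurable $E \subseteq \T$ with $\phi(\chi_E) = 1$, the natural candidate is the outer function $g = g_E$ determined on the circle by $\log|g| = (\log \varepsilon)\,\chi_{\T \setminus E}$. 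By construction $\|g\| = 1$ and $|g| = \varepsilon$ off $E$. The advantage of working with a $*$-homomorphism is the clean computation $|\hat g(\phi)|^2 = \phi(|g|^2) = \phi(\chi_E) = 1$, so after rotating $g$ by a unimodular constant we obtain $\hat g(\phi) = 1$ with $|\hat g| \le \varepsilon$ on $\mathcal{M}(L^\infty(\T)) \setminus V$. This realizes $\phi$ as a strong boundary point and presents (the hull of) $V$ as a peak set containing $\phi$.

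Letting $V$ range over all clopen neighborhoods of $\phi$, whose intersection is $\{\phi\}$ by total disconnectedness, would then display $\{\phi\}$ as an intersection of peak sets, that is, as a $p$-set, and Theorem \ref{hd} finishes the argument. I expect the main obstacle to be precisely the globalization step: the outer-function construction controls $|\hat g|$ on the \v{S}ilov boundary $\mathcal{M}(L^\infty(\T))$, whereas the $p$-set condition is a statement about the full maximal ideal space $\mathcal{M}(H^\infty(\D))$, so the estimate off $V$ must be propagated from the boundary into the interior fibers. The cleanest way to overcome this is to bypass the pointwise peaking estimates and instead verify the measure-theoretic form of the $p$-set condition --- that every measure on the \v{S}ilov boundary annihilating $H^\infty(\D)$ assigns no mass to $\{\phi\}$ --- via the generalized F.\ and M.\ Riesz theorem of Glicksberg; alternatively one argues directly, using the maximum modulus principle, that the peak sets constructed above already separate $\phi$ from the remainder of $\mathcal{M}(H^\infty(\D))$.
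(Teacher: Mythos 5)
Your proposal is correct, and it takes a genuinely more self-contained route than the paper. The paper's proof is citation-driven: it invokes the Urysohn-type Lemma \ref{1234} (from \cite{BDSS}), which supplies exactly the functions you build by hand --- norm one, value $1$ at the given point, modulus $<\varepsilon$ on $\partial_S H^\infty(\D)\setminus U$ --- and then feeds them into the HWW criterion for $p$-sets (Lemma \ref{123}); you instead construct the peaking functions explicitly (clopen $V\leftrightarrow E$ in the Stone space $\clm(L^\infty(\T))$, the outer function with $\log|\tilde g|=(\log\varepsilon)\chi_{\T\setminus E}$, and $|\hat g(\phi)|=1$ via $*$-multiplicativity of $\phi$ on $L^\infty(\T)$) and verify the definition of a $p$-set directly, as an intersection of peak sets. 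Your construction is sound --- it is essentially the function theory sitting inside the cited lemma --- so your route buys transparency, while the paper's buys brevity; one harmless imprecision is that your peak set is $\{\hat g=1\}$, which lies between $\{\phi\}$ and $V$ but need not be all of $V$, and this is all the argument requires. As for the obstacle you flag: it is genuine only if you insist on realizing $\{\phi\}$ as a $p$-set of the full spectrum $\clm(H^\infty(\D))$; it dissolves entirely if you apply Theorem \ref{hd} to $H^\infty(\D)$ viewed as a uniform algebra on the compact space $K=\partial_S H^\infty(\D)\cong\clm(L^\infty(\T))$ (a legitimate choice, since via $\tau$ and the boundary property $H^\infty(\D)$ sits isometrically in $C(\clm(L^\infty(\T)))$ as a closed, unital, point-separating subalgebra), for then your peak sets intersect to $\{\phi\}$ by total disconnectedness and no propagation into the interior fibers is needed. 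This is implicitly what the paper does as well: its verification of condition (3) of Lemma \ref{123} is likewise carried out only on $\partial_S H^\infty(\D)\setminus U$. If one does want $\{\phi\}$ as a $p$-set inside all of $\clm(H^\infty(\D))$, your representing-measure suggestion is the right fix: any $\psi$ in the intersection of your peak sets has a representing measure $m_\psi$ on the \v{S}ilov boundary which, from $\int \hat h\, dm_\psi=1$ and $|\hat h|\le 1$, must be supported inside every $V$, hence $m_\psi=\delta_\phi$ and $\psi=\phi$; this dominated-convergence argument is precisely the one the paper carries out in Section \ref{sec p sets} to prove $J_P=J_{P\cap\partial_S H^{\infty}(\D)}$.
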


Denote by $\partial_S H^{\infty}(\D)$ the \v{S}ilov boundary of the maximal ideal space $\clm(H^\infty(\D))$ of $H^\infty(\D)$. The above result says that for each $\varphi \in \partial_S H^{\infty}(\D)$, the singleton set $\{\varphi\}$ is a $p$-set of $H^\infty(\D)$, and hence the maximal ideal corresponding to each complex homomorphism in $\partial_S H^{\infty}(\D)$ is an $M$-ideal in $H^\infty(\D)$. Therefore, it seems overly generic to have a detailed description of every $M$-ideal in $H^\infty(\D)$.

As already hinted, there is an inextricable link between the theory of $M$-ideals in a uniform algebra and the structure of $p$-sets within the same uniform algebra. This paper presents new insights on $p$-sets of $H^\infty(\D)$ and their application to $M$-ideals in $H^\infty(\D)$. For instance, Theorem \ref{non} states:

\begin{theorem}
Let $Q$ be a nonempty subset of $\clm(H^\infty(\D))$. If $Q$ is a $p$-set of $H^\infty(\D)$, then
\[
Q \cap \partial_S H^{\infty}(\D) \neq \emptyset.
\]
\end{theorem}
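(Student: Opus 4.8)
The plan is to use the description of a $p$-set as an intersection of peak sets and to reduce the statement to two elementary facts about peak sets, after which a compactness argument finishes the proof. Recall that a peak set of $H^\infty(\D)$ is a nonempty set of the form $P = \{\vp \in \clm(H^\infty(\D)) : \hat f(\vp) = 1\}$ for some $f \in H^\infty(\D)$ with $\|f\| = 1$, $\hat f|_P \equiv 1$ and $|\hat f| < 1$ off $P$, and that a $p$-set $Q$ is an intersection $Q = \bigcap_{\al} P_\al$ of such peak sets. Since $Q \neq \emptyset$ by hypothesis, I would first fix this representation and record that every finite sub-intersection $\bigcap_{i=1}^n P_{\al_i}$ contains $Q$ and is therefore nonempty.

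The two facts I would establish are as follows. First, every peak set meets the \v{S}ilov boundary: if $P$ is a peak set with peak function $f$, then the maximal modulus $\|f\| = 1$ is attained only on $P$, while $\partial_S H^\infty(\D)$, being a boundary for $H^\infty(\D)$, contains a point $\psi$ at which $|\hat f|$ attains its maximum $1$; since $|\hat f| < 1$ off $P$, necessarily $\psi \in P$, so $P \cap \partial_S H^\infty(\D) \neq \emptyset$. Second, a finite intersection of peak sets is again a peak set: given peak functions $f_1, \ldots, f_n$ for $P_1, \ldots, P_n$, set $g_i = \tfrac{1}{2}(1 + f_i)$ and
\[
g = \frac{1}{n} \sum_{i=1}^n g_i.
\]
Each $g_i$ lies in $H^\infty(\D)$, equals $1$ on $P_i$, and satisfies $|\hat g_i| = \tfrac{1}{2}|1 + \hat f_i| < 1$ off $P_i$ because $|\hat f_i| < 1$ there. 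Hence $\hat g \equiv 1$ on $\bigcap_i P_i$, while for any $\vp \notin \bigcap_i P_i$ some index gives $|\hat g_i(\vp)| < 1$ and the remaining factors satisfy $|\hat g_j(\vp)| \le 1$, so $|\hat g(\vp)| \le \frac{1}{n}\sum_i |\hat g_i(\vp)| < 1$. Thus $g$ peaks exactly on $\bigcap_i P_i$.

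Combining these, each finite intersection $\bigcap_{i=1}^n P_{\al_i}$ is a nonempty peak set and therefore meets $\partial_S H^\infty(\D)$. Consequently the family $\{P_\al \cap \partial_S H^\infty(\D)\}_\al$ consists of closed subsets of the compact space $\partial_S H^\infty(\D)$ (each being closed in the compact Hausdorff space $\clm(H^\infty(\D))$) and enjoys the finite intersection property. By compactness its total intersection is nonempty, and this intersection is exactly $Q \cap \partial_S H^\infty(\D)$, which gives the claim. The main point requiring care is that $Q$ may be an intersection of uncountably many peak sets; I would avoid any measure-theoretic bookkeeping of this by passing to finite intersections and invoking compactness, which is precisely where the averaging lemma (keeping finite intersections inside the class of peak sets) does the essential work. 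An alternative route would fix $\vp \in Q$, take a representing measure for $\vp$ carried by $\partial_S H^\infty(\D)$, and use the functions $f_\al$ to show that its closed support lies in every $P_\al$ and hence in $Q$; but the compactness argument above is cleaner and entirely elementary.
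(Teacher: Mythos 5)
Your proposal is correct and takes essentially the same route as the paper's proof: reduce to finite intersections via the finite intersection property and compactness of $\clm(H^\infty(\D))$, use that a finite intersection of peak sets is again a peak set, and observe that a peak function attains its maximum modulus $1$ on $\partial_S H^{\infty}(\D)$, forcing the peak set to meet the \v{S}ilov boundary. The only (harmless) difference is that you construct the peak function for a finite intersection explicitly via the averaging trick $g = \frac{1}{n}\sum_i \frac{1}{2}(1+f_i)$, whereas the paper simply invokes the known fact that (countable, hence finite) intersections of peak sets are peak sets.
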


It is noteworthy to mention that, in contrast to the maximal ideal spaces of $H^\infty(\D)$, evaluation functions do not correspond to $M$-ideals in $H^\infty(\D)$, and they play a less significant role (see Corollary \ref{cor M ideal ev}). 

Next, we introduce an analytic counterpart of the algebraic notion of prime ideals in commutative rings. Before getting into the specifics of this new idea, it is important to go over a key feature of bounded analytic functions. Given a function $f \in H^\infty(\D)$, we denote by $\tilde{f}$ the radial limit extension of $f$:
\begin{equation}\label{eqn: radial limit}
\tilde{f}(z) = \lim_{r \raro 1^-}f(rz) \qquad (z \in \T \;a.e.).
\end{equation}
The above limit's existence is an intricate implementation of Fatou's theorem \cite[page 34]{Hoffman}.

\begin{definition}
A proper nontrivial closed subspace $J$ of $H^\infty(\D)$ is called an analytic prime whenever it meets the following condition (see Definition \ref{def prime}): Given $f$ and $g$ in $H^\infty(\D)$, if
\[
fg \in J,
\]
and
\[
|\tilde{f}(z)| > \delta \qquad (z \in \T),
\]
for some $\delta > 0$, then
\[
g \in J.
\]
\end{definition}

It is now reasonable to look for examples of analytic primes in $H^\infty(\D)$. In Theorem \ref{prop1}, we prove that $M$-ideals in $H^\infty(\D)$ are analytic primes in $H^\infty(\D)$:

\begin{theorem}\label{thm intro prime}
Proper nontrivial $M$-ideals in $H^\infty(\mathbb{D})$ are analytic primes.
\end{theorem}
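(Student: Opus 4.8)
The plan is to reduce the analytic-prime condition to a statement about $p$-sets and then resolve it with representing measures. Since $H^\infty(\D)$ is a uniform algebra and $J$ is a proper nontrivial $M$-ideal, Theorem \ref{hd} lets me write $J = k(E) := \{h \in H^\infty(\D) : \hat h|_E \equiv 0\}$ for some $p$-set $E \subseteq \clm(H^\infty(\D))$; in particular $J$ is a closed ideal. Thus from $fg \in J$ and the multiplicativity of the Gelfand transform I obtain $\hat f\,\hat g = \widehat{fg} \equiv 0$ on $E$, and the goal becomes the purely geometric assertion that $\hat g \equiv 0$ on all of $E$, i.e.\ that the zero set of $\hat f$ cannot shield any point of $E$ from forcing $\hat g$ to vanish.

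First I would record the boundary lower bound. Identifying $\partial_S H^\infty(\D)$ with $\clm(L^\infty(\T))$ and recalling that $\hat f$ restricted to the \v{S}ilov boundary is the Gelfand transform of the boundary function $\tilde f \in L^\infty(\T)$, the hypothesis $|\tilde f| > \delta$ a.e.\ gives $1/\tilde f \in L^\infty(\T)$ with $\|1/\tilde f\|_\infty \le 1/\delta$. Applying any $\psi \in \partial_S H^\infty(\D)$ to the relation $\tilde f \cdot (1/\tilde f) = 1$ yields $|\hat f(\psi)| \ge \delta$, so $\hat f$ is bounded below by $\delta$ on the entire \v{S}ilov boundary. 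Combined with $\hat f\,\hat g \equiv 0$ on $E$, this shows at once that $\hat g \equiv 0$ on $E \cap \partial_S H^\infty(\D)$.

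The main obstacle is upgrading this from $E \cap \partial_S H^\infty(\D)$ to all of $E$: a point $\varphi \in E$ lying off the \v{S}ilov boundary could a priori be a zero of $\hat f$ (for instance where the inner part of $f$ vanishes), so the pointwise argument breaks down precisely where $E$ leaves $\partial_S H^\infty(\D)$. I would overcome this using the defining feature of $p$-sets: each $\varphi \in E$ carries a representing measure $\mu_\varphi$ on $\partial_S H^\infty(\D)$ whose support lies in $E \cap \partial_S H^\infty(\D)$. Indeed, for a peak set $\{\hat h = 1\}$ the identity $1 = \hat h(\varphi) = \int \hat h \, d\mu_\varphi$ together with $|\hat h| \le 1$ forces $\mu_\varphi$ to concentrate on that peak set, and intersecting over the peak sets cutting out $E$ confines $\operatorname{supp}\mu_\varphi$ to $E \cap \partial_S H^\infty(\D)$. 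Since $\hat g$ already vanishes there, I conclude $\hat g(\varphi) = \int \hat g \, d\mu_\varphi = 0$ for every $\varphi \in E$, that is $g \in k(E) = J$. As a sanity check that isolates the difficulty, the inner--outer factorization $f = u F$ with $F$ outer and $|\tilde F| \ge \delta$ makes $F$ invertible in $H^\infty(\D)$, so $u g = F^{-1}(fg) \in J$ and the claim reduces to the case of inner $f = u$; this explains why only the zeros of the inner part, which sit off $\partial_S H^\infty(\D)$, are ever at issue.
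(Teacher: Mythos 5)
Your proof is correct, but it takes a genuinely different route from the paper's. The paper proves Theorem \ref{prop1} (in polydisc generality) through the equivalence (i) $\Leftrightarrow$ (iii) of Theorem \ref{hd}: it picks a bounded approximate unit $\{f_\lambda\}$ in $J$, extracts indices with $\|fgf_{\lambda_m}-fg\|\le 1/m$, and divides by $\tilde f$ pointwise on the boundary, using $|\tilde f|>\delta$ to get $\delta\,\|\tilde g\tilde f_{\lambda_m}-\tilde g\|\le\|\tilde f\tilde g\tilde f_{\lambda_m}-\tilde f\tilde g\|\le 1/m$; then $gf_{\lambda_m}\in J$ converges in norm to $g$, and closedness of $J$ finishes. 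You instead use (i) $\Leftrightarrow$ (ii): writing $J$ as the annihilator of a $p$-set $E$, you obtain $|\hat f|\ge\delta$ on $\partial_S H^\infty(\D)$ via the identification \eqref{eqn: tau M H} of the \v{S}ilov boundary with $\clm(L^\infty(\T))$, hence $\hat g\equiv 0$ on $E\cap\partial_S H^\infty(\D)$, and you propagate the vanishing to all of $E$ by showing that the representing measure of each $\varphi\in E$ is supported in $E\cap\partial_S H^\infty(\D)$. All steps are sound: the peak-set concentration argument works for an arbitrary family of peak sets because representing measures are regular, and it is in fact the same concentration argument the paper runs in Section \ref{sec p sets} to prove $J_P=J_{P\cap\partial_S H^\infty(\D)}$ (there with powers of $(1+f_i)/2$ and dominated convergence; your version, exploiting that the peaking function equals $1$ exactly on the peak set, is even a bit cleaner). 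So your proof essentially factors as that result plus the boundary lower bound. The trade-off: the paper's argument is shorter, needs no representing measures, and works verbatim on $\D^n$ (as Theorem \ref{prop1} requires), since it only uses that the essential supremum of boundary values computes the $H^\infty$ norm; your argument yields extra structural information --- the quantitative bound $|\hat f|\ge\delta$ on the whole \v{S}ilov boundary and the principle that the hull of an $M$-ideal is determined by its \v{S}ilov part --- but leans on the disc-specific identification $\partial_S H^\infty(\D)\cong\clm(L^\infty(\T))$, so recovering the full polydisc statement would require the analogue of \eqref{eqn: tau M H} there. Your closing reduction to inner $f$, via invertibility in $H^\infty(\D)$ of an outer function whose boundary modulus is bounded below, is correct but not needed for the argument.
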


We also connect $M$-ideals with inner functions. A function $f \in H^\infty(\D)$ is called \textit{inner} if
\[
|\tilde{f}(z)| =1,
\]
for all $z \in \T$ a.e. \cite[page 62]{Hoffman}. It is clear that the space of inner functions forms a multiplicative set in $H^\infty(\D)$. In Corollary \ref{inn}, we prove that:

\begin{theorem}
A proper and nontrivial $M$-ideal in $H^\infty(\D)$ never intersects the space of inner functions. 
\end{theorem}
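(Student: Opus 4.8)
The plan is to argue by contradiction and to reduce the whole statement to the single assertion that a proper $M$-ideal cannot contain the constant function $1$. Accordingly, suppose $J$ is a proper nontrivial $M$-ideal in $H^\infty(\D)$ that happens to contain an inner function $u$. By Theorem \ref{thm intro prime}, $J$ is an analytic prime, and it is precisely this prime property that I would exploit: the inner hypothesis will supply, at no cost, the uniform lower bound $|\tilde f| > \delta$ that the definition of an analytic prime demands.

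The key (and essentially only) computation is a trivial factorization. I would write $u = u \cdot 1$. Since $u$ is inner, $|\tilde u(z)| = 1 > \tfrac{1}{2}$ for a.e. $z \in \T$, while $u \cdot 1 = u \in J$ by assumption. Applying the defining condition of an analytic prime with $f = u$, $g = 1$, and $\delta = \tfrac12$ then yields
\[
1 \in J.
\]
Thus the presence of any inner function in $J$ forces the unit into $J$; the inner condition enters only through the bound $|\tilde u| \equiv 1$, which is exactly what the prime property needs in order to peel off the factor $u$.

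It remains to contradict $1 \in J$, and this is the one step deserving care. Here I would invoke the uniform-algebra classification of $M$-ideals (Theorem \ref{hd}): a proper $M$-ideal in $H^\infty(\D)$ is of the form $J = \{ f \in H^\infty(\D) : \hat f|_E \equiv 0 \}$ for a nonempty $p$-set $E \subseteq \clm(H^\infty(\D))$ (nonemptiness of $E$ being equivalent to properness of $J$). Fixing any $\vp \in E$ and evaluating the Gelfand transform, membership $1 \in J$ would give $\hat 1(\vp) = 0$, contradicting $\hat 1 \equiv 1$. Equivalently, since $M$-ideals in a unital uniform algebra are closed ideals, $1 \in J$ already forces $J = H^\infty(\D)$, against properness. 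Either way we reach a contradiction, so no inner function can lie in $J$. I expect no genuine obstacle beyond Theorem \ref{thm intro prime} itself; the only delicate point is this final identification of why $1 \notin J$, which I would settle through the structural description of proper $M$-ideals as annihilators of nonempty $p$-sets, on which constants never vanish.
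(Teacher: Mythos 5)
Your proof is correct and takes essentially the same route as the paper's: write the inner function as $u = u \cdot 1$, apply the analytic prime property (Theorem \ref{prop1}) with $\delta = \tfrac12$ using $|\tilde u| \equiv 1$ a.e.\ to conclude $1 \in J$, and then contradict properness. The paper settles the final step simply by noting (via Corollary \ref{cor M are ideals}) that $J$ is an ideal, which is the second of the two equivalent justifications you give.
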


We also discuss some relationships between $M$-ideals in $H^\infty(\D)$ and the theory of Hilbert function spaces as well as classical operator theory. The Hilbert function space here is the Hardy space
\[
H^2(\D) := \Big\{f \in \text{Hol}(\D): \|f\|:= \Big(\sup_{0<r<1} \int_{\T} |f(rz)|^2 d\mu(z)\Big)^{\frac{1}{2}} < \infty\Big\},
\]
where $d\mu$ denotes the normalized Lebesgue measure on $\T$. In terms of radial limits, $H^2(\D)$ is also the Hilbert space consisting of all analytic functions on $\D$ with square summable Taylor coefficients (see Section \ref{sec prime} for more details):
\[
H^2(\D) = \{f = \sum_{m=0}^{\infty} \alpha_m z^m \in \text{Hol}(\D): \|f\| = \Big(\sum_{m=0}^{\infty} |\alpha_m|^2\Big)^{\frac{1}{2}} < \infty\}.
\]
The integral representation of $H^2(\D)$ implies that the inclusion map
\[
i: H^\infty(\D) \hookrightarrow H^2(\D),
\]
is a contraction. In Theorem \ref{thm M is dense}, we prove a Hilbert space property of $M$-ideals in $H^\infty(\D)$. More specifically:

\begin{theorem}\label{thm intro dense}
If $J$ is a nontrivial $M$-ideal in $H^\infty(\mathbb{D})$, then
\[
\overline{J}^{H^2(\D)} = H^2(\mathbb{D}).
\]
\end{theorem}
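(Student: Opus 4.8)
The plan is to show that $J$ has trivial orthogonal complement inside $H^2(\D)$. If $J = H^\infty(\D)$ the claim is immediate, since the polynomials are dense in $H^2(\D)$, so I assume $J$ is proper; being nontrivial it contains some $f \neq 0$, and by Theorem \ref{thm intro prime} it is an analytic prime. I would start from the inner--outer factorization $f = \theta F$, where $\theta$ is inner and $F$ is outer, and note that $F \in H^\infty(\D)$ because $|\tilde F| = |\tilde f|$ a.e. on $\T$. The first key step is to use the prime property to enlarge $J$: for every \emph{invertible outer} function $v$ (that is, $v, v^{-1} \in H^\infty(\D)$) I would write
\[
f = (\theta v^{-1})(F v),
\]
observe that $\theta v^{-1} \in H^\infty(\D)$ with $|\widetilde{\theta v^{-1}}| = |\tilde v|^{-1} \ge \|v\|^{-1} > 0$ on $\T$, and conclude from the definition of analytic prime that $F v \in J$.

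Next I would test against an arbitrary $h \in H^2(\D)$ with $h \perp J$ in $H^2(\D)$. Since $v_\al(z) = e^{\al z}$ is invertible outer for every $\al \in \mathbb{C}$, the previous step gives $F v_\al \in J$, hence
\[
0 = \langle F v_\al, h\rangle_{H^2(\D)} = \int_{\T} e^{\al z}\, \tilde F(z)\, \overline{\tilde h(z)}\, d\mu(z) \qquad (\al \in \mathbb{C}).
\]
Setting $\Phi = \tilde F\, \overline{\tilde h} \in L^1(\T)$, the right-hand side is an entire function of $\al$ that vanishes identically, so all of its Taylor coefficients vanish:
\[
\int_{\T} z^n\, \Phi(z)\, d\mu(z) = 0 \qquad (n \ge 0).
\]
In terms of Fourier coefficients this reads $\hat\Phi(m) = 0$ for all $m \le 0$, that is, $\Phi \in z H^1(\D)$.

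Finally I would divide by $F$. Since $F$ is outer, $F^{-1}$ lies in the Smirnov class $N^+$, so $\Psi := \Phi/F \in N^+$; its boundary function is $\Phi/\tilde F = \overline{\tilde h} \in L^2(\T)$, whence $\Psi \in H^2(\D)$ by Smirnov's theorem. But the boundary values of $\Psi$ are anti-analytic, which forces $\Psi$ to be constant, and $\Psi(0) = \Phi(0)/F(0) = 0$; hence $\Psi \equiv 0$ and $h = 0$. This yields $J^{\perp} = \{0\}$ in $H^2(\D)$, which is exactly $\overline{J}^{H^2(\D)} = H^2(\D)$. The routine parts here are the inner--outer bookkeeping and the verification that the exponentials are invertible outer; the genuine obstacle is the final step, namely converting the countable family of moment conditions back into the conclusion $h = 0$. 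This is where the analytic structure is essential: one must pass from $\Phi = \tilde F\, \overline{\tilde h} \in z H^1(\D)$ to a statement about $h$ alone, and the cleanest route appears to be Smirnov-class division by the outer function $F$, combined with the fact that an $H^2(\D)$-function with anti-analytic boundary values is constant.
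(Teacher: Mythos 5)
Your proof is correct, but it takes a genuinely different route from the paper's. The paper proves the stronger polydisc version (Theorem~\ref{thm M is dense}) by soft functional analysis: since $J = J_P$ for a $p$-set $P$, the ideal $J$ is invariant under multiplication by the coordinate functions, so it suffices to put $1$ in $\overline{J}^{H^2(\D)}$; a bounded approximate unit $\{\vp_\lambda\}$ of $J$ (Theorem~\ref{hd}) has a weakly convergent subsequence in $H^2(\D)$ with limit $f$, the approximate-unit relation forces $\vp f = \vp$, injectivity of $T_\vp$ gives $f = 1$, and weak closure equals norm closure for convex sets. You instead use the analytic prime property (Theorem~\ref{prop1}) to place $Fv$ in $J$, where $F$ is the outer part of some $0 \neq f \in J$ and $v$ ranges over invertible elements of $H^\infty(\D)$, and then run a hard-analysis argument: entire-function moment conditions, the F.~and M.~Riesz description of $H^1$, and Smirnov division by the outer function $F$, to kill any $h \perp J$. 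There is no circularity, since Theorem~\ref{prop1} is proved before and independently of the density theorem (though both ultimately rest on approximate units). What the paper's argument buys is dimension-freeness: it works verbatim on $\D^n$, whereas your reliance on inner--outer factorization and Smirnov theory confines you to one variable (as the paper itself notes when restricting to $\D$ in Section~\ref{sec Principal ideal}), which is nonetheless enough for the stated theorem. What your argument buys is structural insight: it shows $\overline{J}^{H^2(\D)}$ contains the $H^2$-cyclic subspace generated by the outer part of any element of $J$, in effect re-deriving Beurling's theorem that outer functions are cyclic; had you invoked Corollary~\ref{cor M are ideals} ($M$-ideals are ideals), you could have replaced the exponentials $e^{\al z}$ by the monomials $z^n F$ and quoted cyclicity of outer functions in the sense of \eqref{eqn outer fn I} to shorten the argument considerably. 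Two harmless slips: outerness of $v$ is never needed (only invertibility in $H^\infty(\D)$), and ``$\Phi(0)$'' should be read as the zeroth Fourier coefficient $\hat{\Phi}(0)$, equivalently $G(0)$ where $G \in H^1$ has boundary function $\Phi$, which vanishes by the $n=0$ moment condition.
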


This norm density property is a notable characteristic of $M$-ideals in $H^\infty(\D)$. This indeed reminds us of the norm density of the ring of polynomials:
\[
\overline{\mathbb{C}[z]}^{H^2(\D)} = H^2(\mathbb{D}).
\]

Now we turn to the principal ideals or singly generated ideals in $H^\infty(\D)$. Given $f \in H^\infty(\D)$, we denote by $I_{H^\infty(\D)}(f)$ the closed ideal of $H^\infty(\D)$ generated by $f$. Therefore
\begin{equation}\label{eq prin ideal}
I_{H^\infty(\D)}(f) = \overline{\{fg : g \in H^\infty(\D)\}}^{H^\infty(\D)}.
\end{equation}
We remind the reader that the classification of principle ideals in $H^\infty(\D)$ is unknown. Our goal here is to study the $M$-ideal structure of $I_{H^\infty(\D)}(f)$. Our first necessary condition for such a property is rather operator theoretic. In fact, as a consequence of Theorem \ref{thm intro dense}, we have the following feature about Toeplitz operators on the Hardy space. Let $\vp \in H^\infty(\mathbb{D})$. The \textit{Toeplitz operator} with symbol $\vp$ is the bounded linear operator $T_\vp$ on $H^2(\D)$ defined by
\[
T_\vp f = \vp f \qquad (f \in H^2(\D)).
\]
If $I_{H^\infty(\D^n)}(\vp)$ is an $M$-ideal in $H^\infty(\mathbb{D}^n)$, then (see Corollary \ref{cor ker of TO})
\[
\ker T_\vp^* = \{0\}.
\]
It is worth noting that $\ker T_{\vp}^* = \ker T_{\bar{\vp}}$. Kernels of Toeplitz operators are recognised as complex objects and a subject of independent interest \cite{Hitt, DS1, DS2}. Therefore, $M$-ideals in $H^\infty(\D)$ touch on the delicacy of the kernels of Toeplitz operators.

We continue with more Hilbert function space techniques and results. We need to recall the inner-outer factorization of functions in $H^\infty(\D)$; an essential result in functional analysis, particularly in the analysis of bounded analytic functions and Hilbert function space theory: Let $f \in H^\infty(\D)$ (or even $f \in H^2(\D)$) be a nonzero function. Then there exist an inner function $f_I$ and an outer function $f_O$ such that \cite[page 67]{Hoffman}
\begin{equation}\label{eqn IO fac}
f = f_I f_O,
\end{equation}
on $\D$. Moreover, this factorization is unique up to a constant of modulus one. Recall that a function $f \in H^2(\D)$ is \textit{outer} if
\[
\overline{\text{span}\{z^m f: m \geq 0\}}^{H^2(\D)} = H^2(\D).
\]

We have already talked about the role of inner functions in $M$-ideals. The following is the second application of Theorem \ref{thm intro dense}, which yields a significant consequence that links $M$-ideals with outer functions (see Corollary \ref{c2} for more details):

\begin{theorem}
Let $f \in H^\infty(\D)$. If $I_{H^\infty(\D)}(f)$ is an $M$-ideal in $H^\infty(\D)$, then $f$ is an outer function.
\end{theorem}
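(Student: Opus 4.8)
The plan is to show that the hypothesis forces the inner part of $f$ to be trivial, by passing from the $H^\infty(\D)$-norm to the $H^2(\D)$-norm and then invoking the density statement of Theorem \ref{thm intro dense}. I may assume $f \neq 0$ and that $J := I_{H^\infty(\D)}(f)$ is a proper nontrivial $M$-ideal; indeed, if $J = H^\infty(\D)$ then $f$ is invertible in $H^\infty(\D)$ and hence already outer, while $J = \{0\}$ forces $f = 0$. Using the inner-outer factorization $f = f_I f_O$ with $f_I$ inner and $f_O$ outer (and recalling that $f \in H^\infty(\D)$ gives $f_O \in H^\infty(\D)$ as well), the entire goal reduces to proving that $f_I$ is a unimodular constant.

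First I would compute the $H^2(\D)$-closure of $J$. Since the inclusion $i : H^\infty(\D) \hookrightarrow H^2(\D)$ is a contraction, every $H^\infty(\D)$-convergent sequence also converges in $H^2(\D)$, so that $\overline{J}^{H^2(\D)} = \overline{\{fg : g \in H^\infty(\D)\}}^{H^2(\D)}$. I claim this equals $f_I H^2(\D)$. For the inclusion $\subseteq$, note that for every $g \in H^\infty(\D)$ we have $f_O g \in H^\infty(\D) \subseteq H^2(\D)$, whence $fg = f_I (f_O g) \in f_I H^2(\D)$; since multiplication by the inner function $f_I$ is an isometry on $H^2(\D)$, the subspace $f_I H^2(\D)$ is closed and therefore contains the $H^2(\D)$-closure. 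For the reverse inclusion, taking $g = z^m$ shows $z^m f \in \{fg\}$, and since $f_O$ is outer, $\overline{\text{span}\{z^m f_O : m \ge 0\}}^{H^2(\D)} = H^2(\D)$; multiplying through by the isometry $f_I$ gives $f_I H^2(\D) = \overline{\text{span}\{z^m f : m \ge 0\}}^{H^2(\D)} \subseteq \overline{\{fg\}}^{H^2(\D)}$. Hence $\overline{J}^{H^2(\D)} = f_I H^2(\D)$.

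Now I would apply Theorem \ref{thm intro dense}: because $J$ is a nontrivial $M$-ideal, $\overline{J}^{H^2(\D)} = H^2(\D)$, so $f_I H^2(\D) = H^2(\D)$. In particular $1 \in f_I H^2(\D)$, i.e. $1/f_I \in H^2(\D)$; but $|1/f_I| = 1$ almost everywhere on $\T$, which forces $f_I$ to be a unimodular constant. Therefore $f = f_I f_O$ is outer, as claimed.

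I expect the main obstacle to be the identification $\overline{J}^{H^2(\D)} = f_I H^2(\D)$, and specifically the care needed in moving between the $H^\infty(\D)$-closure defining the ideal and the $H^2(\D)$-closure, and in using the \emph{definition} of an outer function (rather than quoting Beurling's theorem) to pin down the shift-invariant subspace generated by $f$. Everything else is a clean application of the density theorem together with the elementary fact that a subspace of the form $f_I H^2(\D)$ fills all of $H^2(\D)$ only when the inner function $f_I$ is constant.
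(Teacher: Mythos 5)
Your proof is correct and rests on exactly the same pivot as the paper's: the density theorem for $M$-ideals (Theorem \ref{thm M is dense}). The only structural difference is at the end. The paper (Corollary \ref{c2} together with \eqref{eqn outer fn I}) finishes in one line because it takes cyclicity, $I_{H^2(\D)}(f) = H^2(\D)$, as the \emph{definition} of outer, so observing $\overline{I_{H^\infty(\D)}(f)}^{H^2(\D)} = I_{H^2(\D)}(f)$ and invoking density already concludes; you instead identify $\overline{J}^{H^2(\D)} = f_I H^2(\D)$ (your reverse inclusion is precisely the identity $I_{H^2(\D)}(f) = f_I H^2(\D)$) and then re-derive the standard equivalence between cyclicity and triviality of the inner factor. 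That extra step is fine, but its justification as written is too thin: from $1/f_I \in H^2(\D)$ and $|1/f_I| = 1$ a.e.\ on $\T$ alone you cannot conclude constancy --- every inner function lies in $H^2(\D)$ and has unimodular boundary values. You must also use $|f_I| \leq 1$ on $\D$: for instance, write $1 = f_I h$ with $h \in H^2(\D)$, note $\|h\|_2 = \|f_I h\|_2 = 1$ by the isometry you already invoked, and then $1 = |f_I(0)\, h(0)| \leq |h(0)| = |\langle h, 1 \rangle| \leq \|h\|_2 = 1$ forces equality in Cauchy--Schwarz, so $h$ is constant and hence so is $f_I$. With that one line repaired, your argument is complete and matches the paper's in substance; your handling of the degenerate cases ($f = 0$ and $I_{H^\infty(\D)}(f) = H^\infty(\D)$) is also sound and consistent with the nontriviality hypothesis in Theorem \ref{thm M is dense}.
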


All the results from Theorem \ref{thm intro prime} to the theorem stated above are applicable in the polydisc setting and are consolidated in Section \ref{sec prime}.

The above result also calls on the question of the extent to which the converse is true. The answer to this problem is apparently difficult. However, we introduce a large class of bounded analytic functions for which the converse is true. To clarify this class, it is necessary to introduce the notion of essential zeros of $H^\infty(\D)$-functions, which may be a concept of independent significance.

\begin{definition}\label{def ZTf}
We say that $z\in \T$ is an essential zero of an outer function $f\in H^\infty(\D)$ if, for any $\varepsilon>0$ and every open neighborhood $N_z\subset \T$ of $z$, we have the following:
\[
\mu(|\tilde f|^{-1}(0, \varepsilon)\cap N_z) \neq 0.
\]
We let $Z_\T(f)$ denote the essential zeros of the outer function $f$.
\end{definition}

Clearly, if $Z_\T(f)=\emptyset$, then $f$ is invertible. The example presented in \eqref{eqn def of f for zero} serves to illustrate the computation of zero sets of outer functions. The notion of zero sets of functions in $ H^\infty(\D)$ relies on the outer part as follows (see \eqref{eqn IO fac}): The \textit{zero set} of a function $f\in H^\infty(\D)$ is defined by
\[
Z_\T(f) = \{z\in \T: z\in Z_\T(f_O)\}.
\]

\begin{definition}\label{def ZD}
Define $Z^\infty(\D)$ as the set of all functions $f$ in $H^\infty(\D)$ that have a continuous extension to $Z_\T(f)$.
\end{definition}

It is also evident that $A(\D) \subseteq Z^\infty(\D)$, where $A(\D)$ is the disc algebra defined by
\begin{equation}\label{eq disc alg}
A(\D) = \{f \in C(\overline{\D}): f|_{\D} \in \text{Hol}(\D)\}.
\end{equation}
On the other hand, $Z^\infty(\D)$ is significantly larger than $A(\D)$. Indeed, in Section \ref{sec; examples}, we assert that $A(\D) \subsetneqq Z^\infty(\D)$. However, we do not know the measure of the set $H^\infty(\D) \setminus Z^\infty(\D)$. Nevertheless, in the category of functions in $Z^\infty(\D)$, outer functions represent singly generated $M$-ideals in $H^\infty(\D)$ (see Theorem \ref{main1}): 

\begin{theorem}
Let $f\in Z^\infty(\D)$. Then $I_{H^\infty(\D)}(f)$ is an $M$-ideal in $H^\infty(\D)$ if and only if $f$ is an outer function.
\end{theorem}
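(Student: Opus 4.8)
The statement is a biconditional whose forward implication is already in hand: if $I_{H^\infty(\D)}(f)$ is an $M$-ideal, then $f$ is outer, and this is exactly Corollary \ref{c2}, valid for every $f \in H^\infty(\D)$ with no appeal to $Z^\infty(\D)$. Hence the entire task is the converse. Assuming $f \in Z^\infty(\D)$ is outer, I plan to route the argument through the uniform-algebra classification of Theorem \ref{hd}, which reduces the problem to exhibiting a $p$-set $E \subseteq \clm(H^\infty(\D))$ for which $I_{H^\infty(\D)}(f)$ coincides with the kernel $J_E := \{g \in H^\infty(\D) : \hat g|_E \equiv 0\}$. The natural candidate is the hull of the ideal, $E = \{\varphi \in \clm(H^\infty(\D)) : \hat f(\varphi) = 0\}$.

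First I would extract the boundary geometry forced by the hypotheses. Since $f$ is outer, $\log|\tilde f| \in L^1(\T)$, so $\tilde f \neq 0$ almost everywhere and the essential zero set $Z_\T(f)$ of Definition \ref{def ZTf} is Lebesgue-null; since $f \in Z^\infty(\D)$, $f$ extends continuously to $Z_\T(f)$, and the defining property of an essential zero forces that extension to vanish on $Z_\T(f)$. Thus $f$ behaves near the boundary like a disc-algebra function: continuous up to $Z_\T(f)$, vanishing exactly on the closed null set $Z_\T(f)$, and bounded away from zero off any neighborhood of it.

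Next I would verify that $E$ is a $p$-set, working on the \v{S}ilov boundary $\partial_S H^\infty(\D)$, where by Theorem \ref{non} every $p$-set necessarily intersects. Because $Z_\T(f)$ is closed and null, a Rudin--Carleson type construction yields a peak function $h \in A(\D) \subseteq H^\infty(\D)$ with $h \equiv 1$ on $Z_\T(f)$, with $|h| < 1$ on $\overline{\D} \setminus Z_\T(f)$, and with $1 - h$ zero-free on $\D$. Comparing boundary behaviour, the hull $E$ of $f$ and the peak set $\{\varphi : \hat h(\varphi) = 1\}$ agree over $Z_\T(f)$, which lets me identify $E$ with this peak set and conclude that $E$ is a $p$-set.

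The crux, and the step I expect to resist, is the ideal-membership identity $I_{H^\infty(\D)}(f) = J_E$. The inclusion $\subseteq$ is immediate, but the reverse asks that every $g$ with $\hat g|_E \equiv 0$ lie in the closed ideal generated by $f$ --- a genuine division problem in $H^\infty(\D)$, where principal ideals are generally strictly smaller than the kernels of their hulls. My approach is that off a neighborhood of $Z_\T(f)$ the function $f$ is bounded below, so $g/f$ is bounded there, while the continuous vanishing of $f$ on the null set $Z_\T(f)$ permits approximating $g$ in supremum norm by products $f h_n$, damping the quotient near $Z_\T(f)$ with outer cutoff factors that vanish on $Z_\T(f)$. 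Upgrading this from the $H^2(\D)$-density already supplied by Theorem \ref{thm intro dense} to genuine uniform approximation is exactly where membership in $Z^\infty(\D)$ is indispensable, and I anticipate this uniform estimate to be the technical heart of the proof.
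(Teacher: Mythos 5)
Your overall architecture is legitimate and genuinely different from the paper's: the paper proves sufficiency via the equivalence (i)$\Leftrightarrow$(iii) of Theorem \ref{hd}, exhibiting an explicit bounded approximate unit in $I_{H^\infty(\D)}(f)$, whereas you aim at (i)$\Leftrightarrow$(ii), identifying the hull $E=\{\varphi:\hat f(\varphi)=0\}$ as a $p$-set and proving $I_{H^\infty(\D)}(f)=J_E$; your target identity is in fact true and is equivalent to the paper's Corollary \ref{vanishing set}. The genuine gap is that the proposal stops exactly where the proof has to start: both of your main steps contain, hidden inside them, the very construction that constitutes the paper's entire argument, and you never carry it out. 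For the $p$-set step, ``comparing boundary behaviour'' must be replaced by (i) fiber/cluster-set theory for $\clm(H^\infty(\D))$, showing $\hat f\equiv 0$ on the whole fiber over each point of $Z_\T(f)$ (this is where $Z^\infty$-continuity enters), and (ii) a Poisson-kernel lower bound showing that an outer $f$ with $|\tilde f|\ge\varepsilon$ a.e.\ near a point $\lambda\in\T\setminus Z_\T(f)$ is bounded below on a disc neighborhood of $\lambda$, so that $\hat f$ is zero-free on those fibers. For the division step $J_E\subseteq I_{H^\infty(\D)}(f)$, which you correctly flag as the crux, the required ``outer cutoff factors'' are precisely the functions $g_m=\exp\big(\tfrac{1}{2\pi}\int S(\cdot,\theta)k_m(\theta)\,d\theta\big)$ with $k_m=-\chi_{A_m^c}\log|\tilde f|$ constructed in the paper's proof of Theorem \ref{main1} on the basis of Lemma \ref{lemma: outer functions}; the actual work is proving that $k_m$ is essentially bounded above (this is exactly where the definition of essential zeros bites) and establishing the uniform estimates $\widetilde{fg_m}\to 1$ off each fixed $A_{m_0}$ together with $|\widetilde{fg_m}|\le e^{-m}$ on $A_m$. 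Declaring that you ``anticipate this uniform estimate to be the technical heart'' is an accurate diagnosis, not a proof. Note also that the paper's route is strictly lighter than yours: for a bounded approximate unit it suffices to verify $\|f^2g_m-f\|\to 0$ on the single generator $f$ (boundedness then handles all of $I_{H^\infty(\D)}(f)$), whereas your route demands uniform approximation of every $g\in J_E$ by multiples of $f$.

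Two local errors should also be repaired. First, ``since $f$ is outer \ldots\ $Z_\T(f)$ is Lebesgue-null'' is false as stated: essential zeros are not points where $\tilde f$ vanishes, and an outer function can have $\mu(Z_\T(f))>0$ (for instance, let $E$ be a fat Cantor set, and on each complementary interval of length $\ell_j$ set $|\tilde f|=e^{-1/\ell_j}$ on a middle subinterval of length $\ell_j^2$ and $|\tilde f|=1$ elsewhere; then $\log|\tilde f|\in L^1(\T)$, yet every point of $E$ is an essential zero). Nullity does hold in your situation, but only after the $Z^\infty(\D)$ hypothesis is used: the continuous extension vanishes on $Z_\T(f)$, hence $\tilde f=0$ a.e.\ on $Z_\T(f)$, which forces $\mu(Z_\T(f))=0$ because $\tilde f\neq 0$ a.e.\ for outer $f$. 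Second, invoking Theorem \ref{thm M is dense} for the ``$H^2(\D)$-density already supplied'' is circular in this direction: that theorem assumes the ideal is an $M$-ideal, which is precisely what is to be proven; the density you are entitled to use is just the definition \eqref{eqn outer fn I} of an outer function.
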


Moreover, in the case of outer functions in $Z^\infty(\D)$, the corresponding singly generated $M$-ideals are explicit: If $f\in Z^\infty(\D)$ is an outer function, then (see Corollary \ref{vanishing set})
\[
\cli(f) = \{g \in H^\infty(\D):  Z_{\T}(f) \subseteq Z_{\T}(g) \text{ and } g \text{ has a continuous extension to }Z_{\T}(f)\}.
\]
In Example \ref{examp: ideal not in Z}, we present an $M$-ideal $I_{H^\infty(\D)}(f)$ in $H^\infty(\D)$ such that $f$ is an outer function and
\[
f \notin Z^\infty(\D).
\]
All the results and examples presented here suggest that the structure of $M$-ideals in $H^\infty(\D)$ is by far complicated, even at the level of singly generated ideals in $H^\infty(\D)$.

We have also revealed the structure of $M$-ideals in $H^\infty(\D)$ that are finitely generated by functions from $Z^\infty(\D)$. Given $\{f_1,\ldots, f_m\} \subseteq Z^\infty(\D)$, denote by $I_{H^\infty(\D)}(f_1, \ldots, f_m)$ the closed ideal generated by $\{f_1,\ldots, f_m\}$. In Theorem \ref{fg}, we prove:

\begin{theorem}
Let $\{f_1,\ldots, f_m\} \subseteq Z^\infty(\D)$. If $I_{H^\infty(\D)}(f_1, \ldots, f_m)$ is an $M$-ideal in $H^\infty(\D)$, then there exists an outer function $f \in Z^\infty(\D)$ such that
\[
I_{H^\infty(\D)}(f_1, \ldots, f_m) = I_{H^\infty(\D)}(f).
\]
\end{theorem}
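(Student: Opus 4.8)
The plan is to reduce the finitely generated ideal to a single principal ideal by manufacturing one outer generator whose boundary modulus encodes the combined vanishing of $f_1,\dots,f_m$, and then to identify the two ideals through the hull–kernel correspondence for $M$-ideals in the uniform algebra $H^\infty(\D)$. Write $J = I_{H^\infty(\D)}(f_1,\dots,f_m)$, discard the trivial case $J=H^\infty(\D)$, and set $E_0=\bigcap_{i=1}^m Z_\T(f_i)$. First I would construct the candidate generator: let $f$ be the outer function, unique up to a unimodular constant, whose radial boundary modulus is
\[
|\tilde f| = \Big(\sum_{i=1}^m |\tilde f_i|^2\Big)^{1/2} \qquad (\text{a.e. on }\T).
\]
Such an $f$ exists because $\log\big(\sum_i|\tilde f_i|^2\big)$ is integrable: it is bounded above since each $f_i$ is bounded, and it is bounded below by the integrable function $2\log|\tilde f_j|$ for any fixed nonzero $f_j$, the latter being integrable for a nonzero $H^2(\D)$-function. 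Since $\sum_i|\tilde f_i|^2<\varepsilon^2$ forces $|\tilde f_i|<\varepsilon$ for every $i$, Definition~\ref{def ZTf} yields at once $Z_\T(f)\subseteq Z_\T(f_i)$ for each $i$, hence $Z_\T(f)\subseteq E_0$.

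Second, for the inclusion $J\subseteq I_{H^\infty(\D)}(f)$ I would invoke the explicit description of singly generated $M$-ideals. Granting for the moment that $f\in Z^\infty(\D)$ (see below), $f$ is an outer function in $Z^\infty(\D)$, so Theorem~\ref{main1} shows $I_{H^\infty(\D)}(f)$ is an $M$-ideal and Corollary~\ref{vanishing set} gives
\[
I_{H^\infty(\D)}(f)=\{g\in H^\infty(\D): Z_\T(f)\subseteq Z_\T(g)\text{ and }g\text{ has a continuous extension to }Z_\T(f)\}.
\]
Each generator $f_i$ lies in this set: indeed $Z_\T(f)\subseteq E_0\subseteq Z_\T(f_i)$, and $f_i\in Z^\infty(\D)$ extends continuously to $Z_\T(f_i)\supseteq Z_\T(f)$. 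Therefore $f_i\in I_{H^\infty(\D)}(f)$ for all $i$, and consequently $J\subseteq I_{H^\infty(\D)}(f)$.

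Third, and this is where the $M$-ideal hypothesis on $J$ does the real work, I would prove the reverse inclusion by showing $f\in J$. Since $J$ is an $M$-ideal in the uniform algebra $H^\infty(\D)$, Theorem~\ref{hd} identifies $J$ as the kernel $J=\{g:\hat g|_P\equiv 0\}$ of a $p$-set $P=\mathrm{hull}(J)\subseteq\clm(H^\infty(\D))$, and since $J$ is generated by $f_1,\dots,f_m$ we have $P=\{\varphi:\hat f_i(\varphi)=0\text{ for all }i\}$. On the \v{S}ilov boundary $\partial_S H^\infty(\D)$, which is the maximal ideal space of $L^\infty(\T)$, every $\varphi$ is a $\ast$-homomorphism, so the $L^\infty(\T)$-identity $|\tilde f|^2=\sum_i|\tilde f_i|^2$ transforms into
\[
|\hat f(\varphi)|^2=\sum_{i=1}^m|\hat f_i(\varphi)|^2 \qquad (\varphi\in\partial_S H^\infty(\D)).
\]
Hence $\hat f$ vanishes on $P\cap\partial_S H^\infty(\D)$. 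To pass from $P\cap\partial_S H^\infty(\D)$ to all of $P$ I would use that a $p$-set is determined by its trace on the \v{S}ilov boundary — the circle of ideas behind Theorem~\ref{non}, namely that $P\cap\partial_S H^\infty(\D)$ is a boundary for the restriction of $H^\infty(\D)$ to $P$, so that $\sup_P|\hat f|=\sup_{P\cap\partial_S H^\infty(\D)}|\hat f|=0$. This yields $\hat f|_P\equiv 0$, i.e. $f\in J$, and since $J$ is a closed ideal we conclude $I_{H^\infty(\D)}(f)=\overline{fH^\infty(\D)}\subseteq J$. Combined with the previous step, $J=I_{H^\infty(\D)}(f)$ with $f$ outer.

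The step I expect to be the main obstacle is verifying that the constructed outer function $f$ indeed belongs to $Z^\infty(\D)$, that is, that $f$ extends continuously (with value $0$) across $Z_\T(f)$. This cannot follow from the boundary modulus alone and must be extracted from the hypothesis $f_i\in Z^\infty(\D)$: near a point $z_0\in Z_\T(f)\subseteq E_0$ each $f_i$ extends continuously and vanishes, and one must propagate this regularity to $f$, using that $Z_\T(f)$ is necessarily a null set (an outer function in $Z^\infty(\D)$ cannot vanish on a set of positive measure, for otherwise $\log|\tilde f|\notin L^1(\T)$) and reusing the boundary-continuity estimates developed for the single-generator Theorem~\ref{main1}. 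A secondary technical point is the precise $p$-set propagation statement used in the third step; if a self-contained argument is preferred, one can instead compare hulls, noting that $I_{H^\infty(\D)}(f)$ is itself an $M$-ideal, so that $\mathrm{hull}(I_{H^\infty(\D)}(f))\subseteq P$ follows from $J\subseteq I_{H^\infty(\D)}(f)$, and then deduce equality of the two $p$-sets from their common trace on $\partial_S H^\infty(\D)$.
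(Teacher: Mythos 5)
Your strategy is genuinely different from the paper's, and most of it is sound, but as submitted it has one unfilled gap, and it sits at the load-bearing step: you never prove that your candidate generator $f$ (the outer function with $|\tilde f|^2=\sum_i|\tilde f_i|^2$) lies in $Z^\infty(\D)$. Everything hinges on this: without it you cannot invoke Theorem \ref{main1} and Corollary \ref{vanishing set} to get $I_{H^\infty(\D)}(f)=\cli(f)$, so the inclusion $J\subseteq I_{H^\infty(\D)}(f)$ collapses, and the conclusion of the theorem itself demands $f\in Z^\infty(\D)$. Moreover, the repair you sketch points in the wrong direction: that $Z_\T(f)$ is a Lebesgue-null set is what the paper needs for its Fatou-theorem construction, not what your construction needs. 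What actually closes the gap is a harmonic-measure estimate. Fix $z_0\in E_0=\bigcap_i Z_\T(f_i)$ and small $\delta>0$. Since each $f_i\in Z^\infty(\D)$ extends continuously to $Z_\T(f_i)\ni z_0$ with value $0$ (see the discussion opening Section \ref{sec Principal ideal}), there is a disc $B$ centered at $z_0$ with $|f_i|<\delta$ on $B\cap\D$ for all $i$; taking radial limits gives $|\tilde f_i|\le\delta$ a.e.\ on $B\cap\T$, hence $|\tilde f|\le\sqrt m\,\delta$ a.e.\ there. Writing $\log|f(z)|=\frac{1}{2\pi}\int_\T P(z,\theta)\log|\tilde f(e^{i\theta})|\,d\theta$ and splitting the integral over $B\cap\T$ and its complement, the harmonic measure of $B\cap\T$ at $z$ tends to $1$ as $z\to z_0$, so $\limsup_{z\to z_0}\log|f(z)|\le\log(\sqrt m\,\delta)$; letting $\delta\downarrow 0$ shows $f$ extends continuously by $0$ to all of $E_0\supseteq Z_\T(f)$, i.e.\ $f\in Z^\infty(\D)$. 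With this inserted your proof is complete. (Two harmless imprecisions in your third step: the $p$-set $P$ provided by Theorem \ref{hd} need not equal the hull $\{\varphi:\hat f_i(\varphi)=0\ \text{for all}\ i\}$, only be contained in it, but all you actually use is $\hat f_i|_P=0$, which is immediate from $f_i\in J=J_P$; and the \v{S}ilov-boundary propagation you appeal to is precisely the unlabeled theorem at the end of Section \ref{sec p sets}, namely $J_P=J_{P\cap\partial_S H^\infty(\D)}$, which is the correct citation rather than Theorem \ref{non}.)

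Once repaired, your route differs substantially from the paper's. The paper first reduces to outer generators via analytic primality (Theorem \ref{prop1}), then for $m=2$ splits into cases according to whether $Z_\T(f_1)\cap Z_\T(f_2)$ is empty, builds a joint bounded approximate unit $\zeta_{p,q}=\vp_p+\psi_q-\vp_p\psi_q$ out of the units constructed in Theorem \ref{main1}, identifies $I_{H^\infty(\D)}(f_1,f_2)$ with the set of functions vanishing continuously on $Z_\T(f_1)\cap Z_\T(f_2)$, produces the single generator by Fatou's theorem as an outer function in $A(\D)$ vanishing exactly on that closed null set, and finally inducts on $m$. Your argument needs no reduction to outer generators, no case analysis, no induction, and no approximate-unit computations: the $M$-ideal hypothesis enters only once, through Theorem \ref{hd} and the $C^*$-identity $|\hat f(\varphi)|^2=\sum_i|\hat f_i(\varphi)|^2$ on $\partial_S H^\infty(\D)\cong\clm(L^\infty(\T))$, which is an elegant mechanism the paper does not use. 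What the paper's approach buys is an explicit description of the ideal and a generator in the disc algebra; what yours buys is a one-shot construction of the generator that scales to any $m$ (and even to the case $E_0=\emptyset$, where $Z_\T(f)=\emptyset$ forces $f$ invertible and $J=H^\infty(\D)$, recovering the paper's first case) without extra effort.
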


The above result has some peculiarities within the framework of ideal theory. Indeed, this is reminiscent of the ascending chain condition in a ring. More specifically, the finitely generated ideal subjected to the $M$-ideal condition yields the subsequent terminating fact:
\[
I_{H^\infty(\D)} (f_1) \subseteq I_{H^\infty(\D)} (f_1, f_2) \subseteq \cdots \subseteq I_{H^\infty(\D)} (f_1, \cdots, f_m) = I_{H^\infty(\D)} (f).
\]
We further recall that a ring $R$ satisfies the ascending chain condition on ideals if and only if every ideal of $R$ is finitely generated. Note that rings that satisfy the ascending chain condition on ideals are also called \textit{Noetherian}.

Alongside the results mentioned above, this article covers additional results about $M$-ideals, $p$-sets, and associated areas. Furthermore, an ample number of examples and counterexamples have been provided. We refer the reader to \cite{BO} for the theory of $M$-ideals in the context of Banach algebras, such as $M$-ideals that are ideals. Also see \cite{Gilles} for some natural examples of $M$-ideals, and see \cite{Karl} for more exotic examples of $M$-ideals. 

We shall now delineate the structure of the subsequent sections of the paper. Section \ref{sec Prelim} provides an overview of the foundational concepts, recalls definitions, and conveys known results that will be used in the next sections. Section \ref{sec colossal} illustrates the extensive magnitude of $M$-ideals in $H^\infty(\D)$, while Section \ref{sec p sets} discusses $p$-sets from the standpoint of the space $H^\infty(\D)$. In Section \ref{sec prime}, we introduce the notion of analytic primes in $H^\infty(\D)$ and prove that proper $M$-ideals are analytic primes. A number of the results presented in this section pertain to Hilbert function space theory, such as Toeplitz operators, outer functions, and inner functions. Each result in this section applies to several variables. The primary emphasis of Section \ref{sec Principal ideal} is on principal ideals in $H^\infty(\D)$ that are generated by functions from $Z^\infty(\D)$. Section \ref{sec; examples} provides direct applications of results from previous sections and offers examples that illustrate the intricacy of the functions introduced earlier. Section \ref{sec; finitely gen} focuses on $M$-ideals generated by finite number functions in $Z^\infty(\D)$.

\section{Preliminaries}\label{sec Prelim}

We already pointed out that $H^\infty(\D)$ is a uniform algebra. Here we begin with the formal definition of uniform algebras. A \textit{uniform algebra} on a compact Hausdroff space $K$ is a uniformly closed subalgebra of $C(K)$ which contains the constant functions and separates the points of $K$. Evidently, a uniform algebra is a commutative Banach algebra (under the uniform norm).

Now we set up some basic structures of commutative Banach algebras. Let $A$ be a commutative Banach algebra with unit. Let us denote by $\clm(A)$ the maximal ideal space of $A$. Since $A$ is unital, we know that $\clm(A) \subseteq S_{A^*}$, where given a Banach space $X$, we define the unit sphere of $X$ as
\[
S_X = \{x \in X:\|x\|_{X} = 1\}.
\]
Hence by the Banach–Alaoglu theorem and the fact that $\clm(A)$ is closed, it follows that $\clm(A)$ is a compact Hausdorff space in the weak-$*$ topology. If we denote by $C(\clm(A))$ the algebra of continuous functions on $\clm(A)$ with the supremum norm, then the \textit{Gelfand map}
\[
\Gamma : A \raro C(\clm(A)),
\]
is a contractive homomorphism of Banach algebras, where
\begin{align*}
\Gamma(f) = \hat{f},
\end{align*}
and
\[
\hat{f} (\vp) = \vp(f),
\]
for all $f\in A$ and $\vp \in \clm(A)$. A subset $C\subseteq K$ is called a \textit{boundary} for $A$ if
\[
\underset{x\in K}{\sup} |f(x)|=\underset{x\in C}{\max}|f(x)|,
\]
for all $f \in A$. The \textit{\v{S}ilov boundary} for $A$, denoted by $\partial_S A$, is the smallest closed boundary for $A$, that is \cite[page 173]{KH}
\[
\partial_S A = \bigcap \{C \subseteq K: C \text{ is a closed boundary for } A\} .
\]
Equivalently, $\partial_S A$ is the smallest closed subset of $\clm(A)$ on which every $\hat{f} \in \Gamma(A)$ attains its maximum modulus. Next, we turn to the unital commutative Banach algebra $H^\infty(\D)$ \cite[Chapter 10]{KH}. In this case
\begin{equation}\label{eqn: norm of infty}
\|f\|_\infty = \|\hat{f}\|_\infty = \sup_{\vp \in \clm(H^\infty(\D))} |\hat{f}(\vp)|,
\end{equation}
for all $f \in H^\infty(\D)$, that is, $\Gamma$ an isometric isomorphism from $H^\infty(\D)$ into $C(\clm(H^\infty(\D)))$. Evidently, $\Gamma$ is not onto. We use the Gelfand map $\Gamma : H^\infty(\D) \raro C(\mathcal{M}(H^\infty(\D)))$ to identify $H^\infty(\D)$ with $\widehat{H^\infty(\D)}$, where
\[
\widehat{H^\infty(\D)}:= \Gamma(H^\infty(\D)),
\]
Therefore, $\widehat{H^\infty(\D)}$, and hence an isometric copy of $H^\infty(\D)$, is a uniformly closed subalgebra of $C(\mathcal{M}(H^\infty(\D)))$. In particular, $\widehat{H^\infty(\D)}$ contains constant functions and separates points \cite{KH}.

Recall from \eqref{eqn: radial limit} that for a function $f \in H^\infty(\D)$, the radial limit extension of $f$ is denoted by $\tilde{f}$, where
\[
\tilde{f}(z) = \lim_{r \raro 1^-}f(rz) \qquad (z \in \T \;a.e.).
\]
This yields a natural way to identify $H^\infty(\D)$ with a closed subalgebra of $L^\infty(\T)$, where $L^\infty(\T)$ denotes the von Neumann algebra of all essentially bounded measurable complex-valued functions on $\T$. If $\widetilde{H^\infty(\T)}$ denotes the copy of $H^\infty(\D)$ in $L^\infty(\T)$, then
\[
\widetilde{H^\infty(\T)}=L^{\infty}(\T) \cap \widetilde{H^2(\T)},
\]
where $\widetilde{H^2(\T)} \subseteq L^2(\T)$ is the Hardy space on the unit circle $\mathbb{T}$ (again, via radial limits as in \eqref{eqn: radial limit}). In view of the above identification, we denote by $\tilde{f} \in \widetilde{H^\infty(\T)}$ the function corresponding to $f \in H^\infty(\D)$. In other words, we can represent $H^{\infty}(\D)$ as a uniformly closed subalgebra of $L^{\infty}(\T)$. This point of view is useful in identifying the \v{S}ilov boundary of $H^\infty(\D)$ in the sense that
\begin{equation}\label{eqn: tau M H}
\partial_S H^{\infty}(\D) = \tau(\clm(L^{\infty}(\T))),
\end{equation}
where the map $\tau:\clm(L^{\infty}(\T)) \rightarrow \clm(H^{\infty}(\D))$ defined by
\[
\tau (\vp) = \vp|_{H^\infty(\D)} \qquad (\vp\in \clm(L^{\infty}(\T))),
\]
is a homeomorphism \cite[page 174]{KH}. These tools will frequently be implemented in the next sections. In addition, we will see soon that $p$-sets are closely connected to the theory of $M$-ideals in $H^\infty(\D)$. We recall:

\begin{definition}\label{def peak set}
Let $A$ be a uniform algebra on a compact Hausdorff space $K$. A closed subset $C \subseteq K$ is said to be a peak set if there exists a function $f \in A$ such that
\[
f|_C \equiv 1,
\]
and
\[
|f(x)| < 1 \qquad (x \in X \setminus C).
\]
We frequently say that $f$ \textit{peaks on} $C$.
\end{definition}

Clearly, if $f\in A$ peaks on $C$, then
\[
\|f\| = 1.
\]
It is easy to see that a countable intersection of peak sets is a peak set \cite[page 208]{Gar}. In general, we define:

\begin{definition}
Let $A$ be a uniform algebra on a compact Hausdorff space $K$. A set $P\subset K$ is called a $p$-set of $A$ if $P$ is the intersection of a family of peak sets.
\end{definition}

Some of the well-known and general features of $p$-sets are as follows: (i) A $p$-set is a peak set if and only if it is a $G_\delta$-set \cite[Lemma 12.1]{Gamelin}. (ii) A countable union of $p$-sets is a $p$-set whenever the union is a closed set \cite[Corollary 12.8]{Gamelin}.

We conclude this introductory section by recalling the following characterizations of $M$-ideals in uniform algebras in terms of $p$-sets and bounded approximate units. This result will be frequently used in what follows.

\begin{theorem}\cite[Chapter V, Theorem 4.2]{HWW}\label{hd}
Let $A$ be a uniform algebra and let $J$ a closed subspace of $A$. Then the following conditions are equivalent:
\begin{enumerate}
\item[(i)] $J$ is an $M$-ideal of $A$.
\item[(ii)] $J$ is the annihilator of a p-set of $A$.
\item[(iii)] $J$ is an ideal of $A$ containing a bounded approximate unit.
\end{enumerate}
\end{theorem}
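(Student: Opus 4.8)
The plan is to realize the uniform algebra $A$ concretely inside $C(K)$, with $K=\clm(A)$ (or, equivalently, its \v{S}ilov boundary $\partial_S A$), and to pass to the dual picture $A^*\cong M(K)/A^\perp$, where $M(K)=C(K)^*$ is the space of regular Borel measures and $A^\perp=\{\mu\in M(K):\int\hat f\,d\mu=0\text{ for all }f\in A\}$ is the space of annihilating measures. All three conditions will be read off from how measures and functions localize on a closed set $E\subseteq K$. The central technical engine is the measure-theoretic description of $p$-sets furnished by the abstract F.\ and M.\ Riesz theorem (Glicksberg): if $E$ is a $p$-set of $A$, then $A^\perp$ is invariant under restriction to $E$, i.e. $\mu\in A^\perp$ implies $\chi_E\mu\in A^\perp$ (for a peak function $u$ one has $u^n\to\chi_E$ boundedly and $\int \hat g\,d(\chi_E\mu)=\lim_n\int \hat g\,\hat u^n\,d\mu=0$ for $g\in A$, and one passes to intersections), and conversely this invariance characterizes $p$-sets.

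The heart of the argument is the implication (ii) $\Rightarrow$ (i). Given a $p$-set $E$, set $J=\{f\in A:\hat f|_E=0\}$, its annihilator, and define $T:M(K)\to M(K)$ by $T\mu=\chi_E\mu$. This is idempotent, and since total variation is additive over the disjoint decomposition $K=E\sqcup E^c$, one has $\|\mu\|=\|\chi_E\mu\|+\|\chi_{E^c}\mu\|$, so $T$ is an $L$-projection on $M(K)$. The $p$-set splitting property gives $T(A^\perp)\subseteq A^\perp$, so $T$ descends to a projection $\bar T$ on $A^*$. A short quotient-norm computation — for $\nu\in A^\perp$ replace $\nu$ by $\chi_E\nu$ and $\chi_{E^c}\nu$, both still annihilating, and use mutual singularity — shows $\bar T$ satisfies the $L$-projection identity $\|[\mu]\|=\|\bar T[\mu]\|+\|(I-\bar T)[\mu]\|$. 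It remains to verify $\operatorname{ran}\bar T=J^\perp$. The inclusion $\operatorname{ran}\bar T\subseteq J^\perp$ is immediate, since a measure carried by $E$ kills every function vanishing on $E$; the reverse inclusion uses the $p$-set hypothesis a second time: if $\sigma=\chi_{E^c}\mu$ annihilates $J$, then testing against $g(1-u^n)\in J$ (which vanishes on $E$ because $u\equiv1$ there) and letting $n\to\infty$ forces $\int\hat g\,d\sigma=0$ for all $g\in A$, so $\sigma\in A^\perp$ and $[\chi_{E^c}\mu]=0$.

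For the remaining implications I would close the cycle (i) $\Rightarrow$ (iii) $\Rightarrow$ (ii). For (i) $\Rightarrow$ (iii), the $L$-projection splits $A^*$ and, dualizing, realizes $A/J$ as (isometric to) a uniform algebra; standard $M$-ideal and Banach-algebra theory then forces $J$ to be a closed ideal and produces a bounded approximate unit from the $3$-ball property together with the multiplicative structure carried by the complementary summand. For (iii) $\Rightarrow$ (ii), take the hull $E=\{x\in K:\hat f(x)=0\text{ for all }f\in J\}$ and manufacture peak functions on $E$ from a bounded approximate unit $\{e_\alpha\}$: the functions $1-e_\alpha$ equal $1$ on $E$, are uniformly bounded, and suitable convex/limit combinations exhibit $E$ as an intersection of peak sets, whence $E$ is a $p$-set and $J=J_E$. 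The main obstacle throughout is the measure-theoretic input — the abstract F.\ and M.\ Riesz theorem and the attendant identification of $J^\perp$ with the restricted measures — together with the delicate peaking construction in (iii) $\Rightarrow$ (ii); the purely formal $L$-projection manipulations are routine by comparison.
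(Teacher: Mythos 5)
A preliminary remark: the paper does not prove this statement at all --- it is quoted directly from \cite[Chapter V, Theorem 4.2]{HWW} and then used as a black box --- so your attempt can only be judged on its own terms, not against an internal proof. Your (ii) $\Rightarrow$ (i) is correct and is the classical argument: Glicksberg's restriction-invariance characterization of $p$-sets (quoted by the paper in Section \ref{sec colossal}) makes $T\mu=\chi_E\mu$ an $L$-projection on $M(K)$ that descends to $A^*\cong M(K)/A^\perp$; your quotient-norm computation and the identification $\operatorname{ran}\bar T=J^\perp$ are both sound, granting the standard repair needed for a general $p$-set rather than a peak set (inner regularity of $\sigma$, plus the fact that finite intersections of peak sets are peak sets, lets you separate a compact carrier of $\sigma$ from a peak superset of $E$; you gesture at this and it is routine).

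The genuine gap is in (iii) $\Rightarrow$ (ii). From a bounded approximate unit $\{e_\alpha\}$ with bound $M$, the functions $1-e_\alpha$ do equal $1$ on the hull $E$, but their norms can be as large as $1+M$, and nothing forces $|1-\hat e_\alpha|<1$ anywhere off $E$ (the values of $\hat e_\alpha$ may wander far outside the unit disc's Stolz-type region). Peak functions require $\|u\|\le 1$ with $|u|<1$ strictly off the peak set, and the criterion of Lemma \ref{123} requires $\|e-a\|\le 1+\varepsilon$; no convex combination, product, power, or pointwise limit of the $1-e_\alpha$ recovers this norm control, and in fact upgrading an arbitrary bounded approximate unit to one with the needed geometric property (compare condition (3) of Lemma \ref{1234}) is essentially the content of the theorem itself. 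A workable route is again measure-theoretic: for $\mu\in A^\perp$ take a weak* cluster point $h$ of $\{\hat e_\alpha\}$ in $L^\infty(|\mu|)$; then $\int \hat f h\,d\mu=0$ for all $f\in A$ (since $fe_\alpha\in A$), while $\hat f h=\hat f$ $|\mu|$-a.e.\ for $f\in J$ (since $\|fe_\alpha-f\|\to 0$), and an exhaustion argument over a suitable sequence in $J$ identifies $h$ with $\chi_{K\setminus E}$ up to a $|\mu|$-null set; this yields $\chi_E\mu\in A^\perp$, Glicksberg's converse then exhibits $E$ as a $p$-set, and the same weak*-limit computation applied to measures annihilating $J$ gives $J=J_E$ via Hahn--Banach. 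Your (i) $\Rightarrow$ (iii) is also thinner than stated: that $M$-ideals in a commutative unital Banach algebra are ideals is a citable theorem of Smith--Ward, but the bounded approximate unit does not fall out of ``the $3$-ball property together with the multiplicative structure''; the standard argument splits $A^{**}=J^{\perp\perp}\oplus_\infty N$, decomposes the identity as $e=p+q$, checks that $p$ is an identity for $J^{\perp\perp}$ under the Arens product, and extracts the approximate unit via Goldstine and convex combinations. Moreover, your aside that the splitting realizes $A/J$ as a uniform algebra presupposes (ii) and cannot be invoked on the way to it.
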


Recall that a bounded approximate unit in a commutative Banach algebra $A$ is a bounded net $\{x_i\}_{i \in \Lambda} \subseteq A$ such that
\[
\|x_i x - x \|_A \raro 0 \qquad (x \in A),
\]
along the net. In our analysis, we will mostly use the equivalence of (i) and (iii) in the preceding theorem. More specifically, we will enhance the practical relevance of this criterion by applying it to the specific context of uniform algebra $H^\infty(\D)$.

\section{$M$-ideals are colossal}\label{sec colossal}

The maximal ideal space of $H^\infty(\D)$ is renowned for its vastness and complex peculiarities. Put simply, the maximal ideal space of $H^\infty(\D)$ is highly intricate and serves as a hindrance to fully understanding the structure and characteristics of $H^\infty(\D)$. The objective of this brief section is to highlight the notoriety of the space of $M$-ideals, just analogous to the maximal ideal space of $H^\infty(\D)$. First, we recall a lemma concerning classification of $p$-sets \cite[V, Lemma 4.3]{HWW}. Denote by $e$ the unit of the given uniform algebra.

\begin{lemma}\label{123}
Let $A$ be an uniform algebra on a compact Hausdorff space $K$, and let $D$ be a closed subset of $K$. Then $D$ is a $p$-set for $A$ if and only if for each $\varepsilon>0$ and open set $U \supseteq D$, there exists $a \in A$ such that the following three conditions hold:
\begin{enumerate}
\item $\|e-a\| \leq 1 + \varepsilon$,
\item $a|_D=0$, and
\item $\Big|(e-a)|_{K \backslash U} \Big| <\varepsilon$.
\end{enumerate}
\end{lemma}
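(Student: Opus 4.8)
The plan is to prove Lemma \ref{123}, the characterization of $p$-sets via approximate peaking functions. This is a standard-type result in uniform algebra theory, and the strategy is to prove the two directions separately. Let me outline the approach, with particular attention to the fact that the genuinely hard direction is the \emph{necessity} (that every $p$-set admits such functions $a$), while sufficiency is comparatively routine.

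For the \textbf{sufficiency} direction (assuming the approximation condition, show $D$ is a $p$-set), I would first recall the characterization of $p$-sets: $D$ is a $p$-set precisely when $D$ is an intersection of peak sets, equivalently when $D = \{x \in K : f(x) = f(x_0) = \|f\|\}$ for functions of a suitable form, or more usefully, when the hull of the annihilator ideal equals $D$. The cleanest route is to manufacture a single peaking-type function from the family $\{a\}$ given by the hypothesis. Given $\varepsilon = 2^{-n}$ and a decreasing sequence of open sets $U_n \supseteq D$ with $\bigcap_n \overline{U_n} = D$, I would extract functions $a_n \in A$ satisfying the three conditions, set $b_n = e - a_n$ (so $b_n|_D = e$, $\|b_n\| \le 1 + 2^{-n}$, and $b_n$ is small off $U_n$), and then assemble a convergent product or weighted sum $f = \sum_n c_n b_n$ (with $\sum c_n = 1$, $c_n > 0$) whose value is $1$ on $D$ and has modulus strictly less than $1$ off $D$. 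Showing the strict inequality off $D$ uses that any point $x \notin D$ lies outside some $\overline{U_n}$, where the corresponding $b_n$ is forced to be small, pulling the convex-combination modulus below $1$; uniform convergence gives $f \in A$. This realizes $D$ as a peak set (hence a $p$-set), completing the direction.

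For the \textbf{necessity} direction (assuming $D$ is a $p$-set, produce the functions $a$), this is where the main obstacle lies. By definition $D = \bigcap_{i} P_i$ with each $P_i$ a peak set, and I may invoke the stated fact that a countable intersection of peak sets is a peak set. The difficulty is passing from the defining intersection to a single function that is simultaneously $0$ on $D$, close to $e$ in norm (the bound $1+\varepsilon$ is the delicate quantitative requirement), and close to $e$ off the given neighborhood $U$. The natural tool is a compactness argument: since $D = \bigcap_i P_i$ and $K \setminus U$ is compact and disjoint from $D$, finitely many peak sets $P_{i_1}, \ldots, P_{i_k}$ already satisfy $\big(\bigcap_j P_{i_j}\big) \cap (K \setminus U) = \emptyset$, and their intersection is again a peak set $P$ with peaking function $h$ (so $h|_P = 1$, $|h| < 1$ elsewhere, $\|h\| = 1$). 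The quantitative step is then to take an appropriate power or functional-calculus modification of $h$ to build $a$: one wants $a$ roughly $e - h^N$ for large $N$ so that $a|_D = 0$ (since $h = 1$ on $D \subseteq P$) while $h^N$ is uniformly small on the compact set $K \setminus U$ where $|h| \le \rho < 1$. The subtle part is simultaneously controlling $\|e - a\| = \|h^N\| \le 1$ to get the bound $1 + \varepsilon$; since $\|h\| = 1$ one has $\|h^N\| \le 1$, so in fact $\|e - a\| \le 1 \le 1 + \varepsilon$ holds automatically, and the remaining work is purely the estimate $|h^N| < \varepsilon$ on $K \setminus U$, which follows from $\rho^N \to 0$.

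I expect the \textbf{crux} to be organizing the necessity argument so that all three conditions are met by one function $a$ with the correct norm bound; the norm bound $1 + \varepsilon$ is what forces the use of a genuine peaking function (with $\|h\| = 1$ and $h = 1$ on $D$) rather than an arbitrary separating function, and the power trick $h^N$ is the device that reconciles ``vanishing on $D$'' with ``near $e$ off $U$.'' For sufficiency, the main care is checking the strict modulus inequality off $D$ and the uniform convergence of the assembled series, both of which are routine once the decreasing neighborhood basis $\{U_n\}$ with $\bigcap_n \overline{U_n} = D$ is fixed (using normality of the compact Hausdorff space $K$). Since this lemma is quoted from \cite{HWW}, I would keep the exposition brief and lean on these standard uniform-algebra techniques.
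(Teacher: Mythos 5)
A preliminary remark: the paper does not prove this lemma at all --- it is quoted verbatim from \cite[Chapter V, Lemma 4.3]{HWW} --- so your proposal can only be judged against the standard argument. Your \emph{necessity} direction is correct and is essentially that standard argument: compactness of $K \setminus U$ extracts finitely many of the peak sets defining $D$ whose intersection $P$ misses $K \setminus U$; a finite intersection of peak sets is a peak set, with peaking function $h$; and $a = e - h^N$ for large $N$ meets all three conditions (indeed with the stronger bound $\|e-a\| \le 1$).

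The \emph{sufficiency} direction, however, has a genuine gap: your plan is to exhibit $D$ as a peak set, and that is simply too strong. A $p$-set is a peak set if and only if it is a $G_\delta$ set (the paper recalls exactly this in Section \ref{sec Prelim}), and in a non-metrizable compact Hausdorff space --- such as $\clm(H^\infty(\D))$, the very setting of this paper --- closed sets need not be $G_\delta$. Your construction already presupposes this: a decreasing sequence of open sets $U_n$ with $\bigcap_n \overline{U_n} = D$ exists precisely when $D$ is $G_\delta$. Moreover, even granting such a sequence, the convex-combination assembly $f = \sum_n c_n b_n$ does not yield $|f|<1$ off $D$. For a point $x$ lying outside $\overline{U_n}$ but inside the earlier neighborhoods $U_m$, $m<n$, the only available estimate is $|f(x)| \le \sum_{m<n} c_m(1+\varepsilon_m) + \sum_{m\ge n} c_m \varepsilon_m$, and requiring this to be $<1$ for every $n$ amounts to $\sum_{m<n} c_m\varepsilon_m < \sum_{m\ge n} c_m(1-\varepsilon_m)$ for all $n$; the left side increases to the positive number $\sum_m c_m\varepsilon_m$ while the right side decreases to $0$, so no choice of weights $c_m$ and tolerances $\varepsilon_m$ can work. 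There is also a structural obstruction to any pointwise repair: an analytic map of the disc of radius $1+\varepsilon$ into $\overline{\D}$ fixing the interior point $1$ is constant, by the maximum principle, so the norm excess $1+\varepsilon$ can never be post-composed away --- which is precisely why the lemma is stated with $1+\varepsilon$ and why $p$-sets are strictly more general than peak sets.

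The correct route for sufficiency is measure-theoretic, via the Glicksberg criterion that the paper itself recalls in Section \ref{sec colossal}: $D$ is a $p$-set if and only if $\mu|_D \in A^\perp$ for every $\mu \in A^\perp$. Given $\mu \in A^\perp$, $f \in A$ and $\varepsilon>0$, use regularity of $|\mu|$ to pick an open $U \supseteq D$ with $|\mu|(U\setminus D)<\varepsilon$, and take $a$ as in the hypothesis. Since $f(e-a) \in A$, we have $\int_K f(e-a)\,d\mu = 0$; splitting this integral over $D$ (where $e-a=e$), over $U\setminus D$ (where $|f(e-a)|\le \|f\|(1+\varepsilon)$ and $|\mu|$ is small), and over $K\setminus U$ (where $|e-a|<\varepsilon$) gives $\bigl|\int_D f\,d\mu\bigr| \le \|f\|\,\varepsilon\,\bigl(1+\varepsilon+|\mu|(K)\bigr)$, whence $\int_D f\,d\mu=0$ for all $f \in A$, i.e. $\mu|_D \in A^\perp$. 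Replacing your sufficiency argument by this one makes the proof complete.
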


We also need to recall a Urysohn-type lemma for $H^\infty(\D)$ from \cite[Lemma 2.1]{BDSS}.

\begin{lemma}\label{1234}
Let $U$ be an open subset of $\clm(H^\infty(\D))$. Then for each $\varepsilon \in(0,1)$ and $\varphi_0 \in$ $U \cap \partial_S H^\infty(\D)$, there exists $f\in {H^{\infty}(\D)}$ such that
\begin{enumerate}
\item [(1)] $\|\hat{f}\|=\hat{f}\left(\varphi_0\right)=1$,
\item[(2)] $\sup \left\{|\hat{f}(\varphi)|: \varphi \in \partial_S H^\infty(\D)\backslash U\right\}<\varepsilon$, and
\item[(3)] $|\hat{f}(\varphi)|+(1-\varepsilon)|1-\hat{f}(\varphi)| \leq 1$ for all $\varphi \in \clm(H^\infty(\D))$.
\end{enumerate}
\end{lemma}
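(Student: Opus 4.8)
The plan is to transport the statement to the unit circle and to realise $f$ as the composition of a conformal map onto the convex region cut out by condition~(3) with a suitably rotated outer function. First I would pass to $L^\infty(\T)$. Since $\vp_0\in U\cap\partial_S H^\infty(\D)$ and $\partial_S H^\infty(\D)=\tau(\clm(L^\infty(\T)))$ with $\tau$ a homeomorphism, put $\psi_0=\tau^{-1}(\vp_0)$. As $L^\infty(\T)$ is a commutative $C^*$-algebra, its Gelfand transform is a $*$-isomorphism onto $C(\clm(L^\infty(\T)))$, and $\clm(L^\infty(\T))$ has a base of clopen sets $\widehat{\chi_E}^{-1}(1)$ with $E\subseteq\T$ measurable. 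Using that $\tau^{-1}(U)$ is an open neighbourhood of $\psi_0$, I would choose a measurable $E$ with $\psi_0\in V:=\widehat{\chi_E}^{-1}(1)$ and $\tau(V)\subseteq U$, taking $\mu(E)>0$ as small as convenient.

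Next I would reformulate condition~(3) geometrically. For any $g\in H^\infty(\D)$ the range of $\hat g$ over $\clm(H^\infty(\D))$ is the spectrum of $g$, which in $H^\infty(\D)$ equals $\overline{g(\D)}$ (an element is invertible precisely when it is bounded below on $\D$); hence (3) is equivalent to $f(\D)\subseteq R_\varepsilon$, where
\[
R_\varepsilon=\{w\in\mathbb{C}:|w|+(1-\varepsilon)|1-w|\le 1\}.
\]
A short computation shows that $R_\varepsilon$ is a compact convex subset of $\overline{\D}$ meeting $\T$ only at $w=1$ (with a convex corner there) and containing $0$ in its interior, so $\max_{R_\varepsilon}|w|=1$. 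Let $\Phi:\D\raro\mathrm{int}\,R_\varepsilon$ be a Riemann map normalised by $\Phi(0)=0$; since $R_\varepsilon$ is a Jordan domain, Carath\'eodory's theorem extends $\Phi$ to a homeomorphism of $\overline{\D}$ onto $R_\varepsilon$ lying in $A(\D)$, and I set $\zeta_1=\Phi^{-1}(1)\in\T$.

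Then I would build $f$ as follows. Fix $\eta\in(0,\varepsilon)$ with $\sup_{|\zeta|\le\eta}|\Phi(\zeta)|<\varepsilon$, possible because $\Phi(0)=0$. Let $b_0$ be the outer function with $|\tilde b_0|=\eta+(1-\eta)\chi_E$, so that $\|b_0\|=1$; because the Gelfand transform on $L^\infty(\T)$ preserves moduli, $|\hat b_0(\tau(\psi))|=\eta+(1-\eta)\chi_V(\psi)$ for $\psi\in\clm(L^\infty(\T))$, whence $|\hat b_0(\vp_0)|=1$ and $|\hat b_0|=\eta$ on $\partial_S H^\infty(\D)\setminus U$. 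Rotating by the unimodular scalar $c=\overline{\hat b_0(\vp_0)}\,\zeta_1$ produces $b:=c\,b_0$ with $\hat b(\vp_0)=\zeta_1$, $\|b\|=1$ and $|\hat b|=\eta$ off $U$, and I put $f=\Phi\circ b$ (well defined in $H^\infty(\D)$ since $b$ is nonconstant, so $b(\D)\subseteq\D$). To read the conclusions off I would transfer $\Phi$ through the characters: as $A(\D)$ is the uniform closure of the polynomials, choosing polynomials $p_n\raro\Phi$ uniformly on $\overline{\D}$ gives $p_n\circ b\raro f$ in $H^\infty(\D)$, and multiplicativity of each $\vp\in\clm(H^\infty(\D))$ yields $\hat f(\vp)=\lim_n p_n(\hat b(\vp))=\Phi(\hat b(\vp))$. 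Consequently $\hat f(\vp_0)=\Phi(\zeta_1)=1$ together with $\|\hat f\|=\max_{R_\varepsilon}|w|=1$ gives~(1); $|\hat f(\vp)|=|\Phi(\hat b(\vp))|\le\sup_{|\zeta|\le\eta}|\Phi|<\varepsilon$ for $\vp\in\partial_S H^\infty(\D)\setminus U$ gives~(2); and $\hat f(\vp)=\Phi(\hat b(\vp))\in\Phi(\overline{\D})=R_\varepsilon$ for all $\vp\in\clm(H^\infty(\D))$ gives~(3).

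The main obstacle will be the two points on which the argument genuinely rests: establishing the geometry of $R_\varepsilon$ so that (3) becomes a clean range condition into a Jordan domain touching $\T$ only at the peak value $1$, and justifying the interchange $\widehat{\Phi\circ b}=\Phi\circ\hat b$ even though $\Phi$ is only continuous up to $\T$ and the spectrum of $b$ reaches $\T$ at $\vp_0$. The polynomial-approximation step is what makes the latter rigorous, while the unimodular rotation is what pins the peak value to exactly $1$; note that one cannot instead force $\tilde f=1$ on a set of positive measure, since by the Lusin--Privalov uniqueness theorem that would make $f$ constant.
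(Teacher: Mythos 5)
Your proof is correct. Note that the paper itself does not prove Lemma \ref{1234} but quotes it from \cite[Lemma 2.1]{BDSS}; your argument is essentially a reconstruction of the proof in that reference (which in turn follows the Cascales--Guirao--Kadets scheme for uniform algebras): an outer function with two-valued modulus built from the clopen structure of $\clm(L^{\infty}(\T))$, a unimodular rotation pinning the value at $\varphi_0$, and composition with the conformal map onto the Stolz region $R_\varepsilon$, the identity $\widehat{\Phi \circ b} = \Phi \circ \hat{b}$ being justified by polynomial approximation.
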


Now we turn to $M$-ideals in $H^\infty(\D)$. Fix $\psi\in \partial_S H^\infty(\D)$ and $\varepsilon > 0$. Then there exists a function $f\in H^\infty(\D)$ such that $f$ satisfies (1) to (3) of Lemma \ref{1234}. If we let
\[
g=1-f,
\]
then
\[
g|_{\{\psi\}} = 0,
\]
and
\[
\|1-g\| =1,
\]
and
\[
|(1-g)|_{\partial_S H^\infty(\D) \backslash U} \| < \varepsilon.
\]
This fulfils all requirements of Lemma \ref{123}, with $e$ equaling $1$ and $a$ equaling $g$. Consequently, it follows that $\{\psi\}$ is a $p$-set of $H^\infty(\D)$. Hence, by Theorem \ref{hd}, $\ker \psi$ is an $M$-ideal in $H^\infty(\D)$. Consequently, we have established the following result, which, in particular, states that the collection of $M$-ideals in $H^\infty(\D)$ is immense.
 
 
\begin{proposition}\label{prop M ideals larger}
Each maximum ideal in $H^\infty(\D)$ that corresponds to a complex homomorphism in $\partial_S H^\infty(\D)$ is an $M$-ideal in $H^\infty(\D)$.
\end{proposition}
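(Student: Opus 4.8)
The plan is to reduce the $M$-ideal assertion to a statement about $p$-sets and then build the required peaking data by hand. Write $\psi \in \partial_S H^\infty(\D)$ for the given complex homomorphism and let $\ker\psi$ be the associated maximal ideal. By the equivalence (i)$\Leftrightarrow$(ii) of Theorem \ref{hd}, it suffices to exhibit a $p$-set $P$ of $H^\infty(\D)$ whose annihilator is exactly $\ker\psi$. The natural candidate is the singleton $P = \{\psi\}$: the annihilator of $\{\psi\}$ consists of those $f$ with $\hat f(\psi) = \psi(f) = 0$, so it coincides with $\ker\psi$. Thus the whole problem collapses to the single claim that $\{\psi\}$ is a $p$-set of $H^\infty(\D)$.

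To verify that $\{\psi\}$ is a $p$-set, I would invoke the intrinsic characterization in Lemma \ref{123}. Fixing $\varepsilon \in (0,1)$ (for $\varepsilon \ge 1$ the required conditions are weaker, so this entails no loss) and an open set $U \supseteq \{\psi\}$, I must produce $a \in H^\infty(\D)$ with $\|1 - a\| \le 1 + \varepsilon$, with $a(\psi) = 0$, and with $|1-a| < \varepsilon$ off $U$, taking the unit to be $e = 1$. The function is supplied by the Urysohn-type Lemma \ref{1234}: applied to the neighborhood $U$ of $\psi \in U \cap \partial_S H^\infty(\D)$, it yields $f \in H^\infty(\D)$ with $\|\hat f\| = \hat f(\psi) = 1$ and $\sup\{|\hat f(\varphi)| : \varphi \in \partial_S H^\infty(\D)\setminus U\} < \varepsilon$. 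Setting $a = g := 1 - f$, condition (1) of Lemma \ref{123} follows from $\|1 - a\| = \|\hat f\| = 1$, condition (2) from $\hat f(\psi) = 1$, and condition (3) from the sup bound on $\hat f$; note that only parts (1) and (2) of Lemma \ref{1234} are needed, part (3) playing no role here.

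The one point that requires care, and which I expect to be the real crux, is the matching of ambient spaces. Lemma \ref{123} controls the peaking function on all of $K \setminus U$, whereas Lemma \ref{1234} bounds $\hat f$ off $U$ only along the \v{S}ilov boundary $\partial_S H^\infty(\D)$; no such bound is asserted on the non-\v{S}ilov points of $\clm(H^\infty(\D))$, where $|\hat f|$ need not be small. The resolution is to run the $p$-set argument with $H^\infty(\D)$ realized as a uniform algebra on its \v{S}ilov boundary $K = \partial_S H^\infty(\D)$ — legitimate through the isometric Gelfand picture together with the homeomorphism $\tau$ of \eqref{eqn: tau M H} identifying $\partial_S H^\infty(\D)$ with $\clm(L^\infty(\T))$ — so that condition (3) of Lemma \ref{123} reads precisely as a bound on $\partial_S H^\infty(\D)\setminus U$, which is exactly what Lemma \ref{1234}(2) delivers. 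With the ambient space fixed consistently, all three conditions of Lemma \ref{123} hold, so $\{\psi\}$ is a $p$-set; Theorem \ref{hd} then upgrades $\ker\psi$ to an $M$-ideal, completing the argument.
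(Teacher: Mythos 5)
Your proposal is correct and follows essentially the same route as the paper: both apply the Urysohn-type Lemma \ref{1234} to produce $f$, set $a = 1-f$, check the three conditions of Lemma \ref{123}, and conclude via Theorem \ref{hd} that $\{\psi\}$ is a $p$-set whose annihilator $\ker\psi$ is an $M$-ideal. The ambient-space point you flag is real --- the paper verifies condition (3) of Lemma \ref{123} only on $\partial_S H^\infty(\D) \setminus U$ without comment --- and your explicit fix, realizing $H^\infty(\D)$ as a uniform algebra on its \v{S}ilov boundary via the isometric Gelfand picture, is precisely what the paper's argument implicitly requires.
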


On the contrary, it is easy to verify that each closed subset of $\mathbb{T}$ is a $p$-set for $C(\mathbb{T})$, where $C(\mathbb{T})$ denotes the space of all continuous functions on the unit circle $\mathbb{T}$. As a result, $M$-ideals in $C(\mathbb{T})$ are ideals of functions that vanish on some closed subset of $\mathbb{T}$. Observe that $C(\mathbb{T})$ is a commutative $C^*$-algebra.

On the other hand, the disc algebra $A(\D)$ is a commutative Banach algebra that is also a uniform algebra on $\overline{\D}$. At this point, we recall the Glicksberg peak set theorem \cite[p. 58]{Gamelin}: Let $A$ be a uniform algebra on a compact Hausdorff space $K$, and let $D$ be a closed subset of $K$. Then $D$ is a $p$-set if and only if $\mu_{D} \in A^\perp$ for all measure $\mu \in A^\perp$. That is, $D$ is a $p$-set if and only if $\mu|_E$ is orthogonal to $A$ for all measures $\mu$ orthogonal to $A$.

Returning to the uniform algebra $A(\D)$, we first observe in view of the F. and M. Reisz theorem, that any measure orthogonal to $A(\D)$ is absolutely continuous with respect to the Lebesgue measure. Hence the $p$-sets of $A(\D)$ are precisely the closed subsets of $\mathbb{T}$ having Lebesgue measure zero. Therefore, a closed subspace $C \subseteq A(\D)$ is an $M$-ideals in $A(\D)$ if and only if there exits a subset $D\subset \mathbb{T}$ of Lebesgue measure $0$ such that
\[
C = \{f\in A(\D): f|_D = 0 \}.
\]
In the context of $M$-ideals in vector-valued disc algebras, we refer the reader to \cite{Acosta}. Clearly, the structure of $M$-ideals in $A(\D)$ is simple and clean. On the contrary, the above proposition clearly suggests that the situation of representing $M$-ideals in $H^\infty(\D)$ is a complex problem. Our results and methodology in this paper will also emphasise this characteristic.

\section{$p$-sets}\label{sec p sets}

According to Theorem \ref{hd}, the investigation of $M$-ideals in $H^\infty(\D)$ is comparable to the analysis of $p$-sets. The theory of $p$-sets presents an equally challenging problem for the space $H^\infty(\D)$, just like $M$-ideals. Within this section, our objective is to pick up a few significant characteristics of $p$-sets, specifically focusing on concrete examples of $M$-ideals in our particular scenario. Naturally, the results also hold independent significance for the scrutiny of $p$-sets of $H^\infty(\D)$.

We start with representations of peak sets of $H^\infty(\D)$. For each $\alpha\in \mathbb{T}$ and $f \in S_{H^\infty(\D)}$, we define
\[
\mathscr{M}^f_\alpha = \{\vp \in \mathscr{M}(H^\infty(\D)): \hat{f}(\vp) = \alpha\}.
\]
It now follows from the definition itself that the peak sets of $H^\infty(\D)$ admit the form $\mathscr{M}^f_\alpha$ for some $\alpha\in \mathbb{T}$ and $f \in S_{H^\infty(\D)}$. If $f = z \in S_{H^\infty(\D)}$, then we simply write
\[
\mathscr{M}_\alpha = \mathscr{M}^f_\alpha \qquad (\alpha\in \mathbb{T}).
\]
Therefore, some simple examples of peak sets in $H^\infty(\D)$ includes
\[
\mathscr{M}_\alpha = \{\vp\in \clm(H^\infty(\D)): \hat{z}(\vp) = \alpha\} \qquad (\alpha\in \mathbb{T}).
\]
Now we turn to $p$-sets and prove that a $p$-set of $H^\infty(\D)$ must meet the \v{S}ilov boundary $\partial_S H^{\infty}(\D)$.

\begin{theorem}\label{non}
Let $Q$ be a nonempty subset of $\clm(H^\infty(\D)) \setminus \partial_S H^{\infty}(\D)$. Then $Q$ is not a $p$-set of $H^\infty(\D)$.
\end{theorem}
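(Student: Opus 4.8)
The plan is to prove the equivalent formulation that every nonempty $p$-set $Q$ of $H^\infty(\D)$ satisfies $Q \cap \partial_S H^\infty(\D) \neq \emptyset$, which directly contradicts the hypothesis $Q \subseteq \clm(H^\infty(\D)) \setminus \partial_S H^\infty(\D)$. The entire argument rests on the defining property of the \v{S}ilov boundary as a minimal closed boundary, namely that $\|\hat f\|_\infty = \max_{\vp \in \partial_S H^\infty(\D)} |\hat f(\vp)|$ for every $f \in H^\infty(\D)$, combined with a finite-intersection and compactness argument.

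First I would settle the single-peak-set case. Let $C \subseteq \clm(H^\infty(\D))$ be a peak set, so there is $f \in S_{H^\infty(\D)}$ with $\hat f|_C \equiv 1$ and $|\hat f| < 1$ off $C$; equivalently $C = \{\vp : |\hat f(\vp)| = 1\}$. Since $\partial_S H^\infty(\D)$ is a boundary, $\max_{\vp \in \partial_S H^\infty(\D)} |\hat f(\vp)| = \|\hat f\|_\infty = 1$, and this maximum is attained at some $\vp_0 \in \partial_S H^\infty(\D)$. Then $|\hat f(\vp_0)| = 1$ forces $\vp_0 \in C$, so $C \cap \partial_S H^\infty(\D) \neq \emptyset$. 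Hence every peak set meets the \v{S}ilov boundary.

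Next, write the $p$-set as $Q = \bigcap_{\alpha} C_\alpha$, an intersection of peak sets. Recall (from the preliminary facts quoted before Theorem \ref{hd}) that a finite — indeed countable — intersection of peak sets is again a peak set. Hence for any finite subfamily, $C_{\alpha_1} \cap \cdots \cap C_{\alpha_n}$ is a peak set, which is therefore nonempty and, by the previous paragraph, meets $\partial_S H^\infty(\D)$. Consequently the family $\{C_\alpha \cap \partial_S H^\infty(\D)\}_\alpha$ of closed subsets of the compact set $\partial_S H^\infty(\D)$ has the finite intersection property, so by compactness of $\partial_S H^\infty(\D)$ its total intersection is nonempty:
\[
Q \cap \partial_S H^\infty(\D) = \Big(\bigcap_\alpha C_\alpha\Big) \cap \partial_S H^\infty(\D) = \bigcap_\alpha \big(C_\alpha \cap \partial_S H^\infty(\D)\big) \neq \emptyset,
\]
which is the desired contradiction.

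I expect the only delicate point to be the reduction to finite intersections: knowing that each individual peak set meets $\partial_S H^\infty(\D)$ is by itself insufficient, since an intersection of sets each meeting a fixed compact set need not meet it, and the argument genuinely needs that \emph{finite} intersections of peak sets remain peak sets so that the finite intersection property — and then weak-$*$ compactness of $\partial_S H^\infty(\D)$ — can be invoked. As an alternative route, one could argue by contradiction through the Urysohn-type characterization in Lemma \ref{123}: assuming $Q$ is a $p$-set, separate the disjoint closed sets $Q$ and $\partial_S H^\infty(\D)$ by an open $U \supseteq Q$ with $\partial_S H^\infty(\D) \subseteq \clm(H^\infty(\D)) \setminus U$, produce $a \in H^\infty(\D)$ as in Lemma \ref{123} for some $\varepsilon \in (0,1)$, and observe that $b = 1 - a$ has $|\hat b| \equiv 1$ on the nonempty $Q$ yet $|\hat b| < \varepsilon$ on $\partial_S H^\infty(\D)$, contradicting $\|\hat b\|_\infty = \max_{\vp \in \partial_S H^\infty(\D)} |\hat b(\vp)|$.
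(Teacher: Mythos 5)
Your main argument is correct and is essentially the paper's own proof: both establish that a single peak set must meet the \v{S}ilov boundary because its norm-one peaking function attains modulus one there, then use the fact that finite intersections of peak sets are peak sets to obtain the finite intersection property for the family $\{C_\alpha \cap \partial_S H^\infty(\D)\}_\alpha$, and conclude by compactness. Your sketched alternative via Lemma \ref{123} is also sound (and a genuinely different route, since it replaces the FIP/compactness step by a direct contradiction with the boundary property), but the paper does not take it.
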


\begin{proof}
Let $Q$ is a non-empty subset of $\clm(H^\infty(\D))$. Let
\[
P=Q\cap \partial_S H^{\infty}(\D).
\]
Assume that $Q$ is $p$-set of $H^\infty(\D)$. We claim that $P\neq \emptyset$. To this end, suppose $\{f_i\}_{i\in I} \subseteq S_{H^\infty(\D)}$ is the family of functions corresponding to the $p$-set $Q$, that is
\[
Q=\bigcap_{i \in I}\cam_1^{f_i}.
\]
Since $P=Q\cap \partial_S H^{\infty}(\D)$, we have
\[
P = \bigcap_{i \in I}\cam_1^{f_i}\bigcap \partial_S H^{\infty}(\D).
\]
Observe that $\clm(H^\infty(\D))$ is compact and $\cam_1^{f_i}$ is closed for each $i$. Therefore, if the collection
\[
\{\cam_1^{f_i}\cap \partial_S H^{\infty}(\D)\}_{i\in I},
\]
has finite intersection property (FIP), then one can conclude that $P\neq \emptyset$. To prove the FIP, for each finite collection $\{f_1, \ldots, f_n\} \subset S_{H^\infty(\D)}$, define
\[
\cam^{f_{i_1}, f_{i_2}, \ldots, f_{i_n}} = \bigcap_{j=1}^n\cam_1^{f_{i_j}}.
\]
As $\cam^{f_{i_1}, f_{i_2}, \ldots, f_{i_n}}$ is a finite intersection of peak sets, there exists a function $g \in S_{H^\infty(\D)}$ such that
\[
\cam_1^{g} = \cam^{f_{i_1}, f_{i_2}, \ldots, f_{i_n}}.
\]
Since $\|g\|=1$ and $\partial_S H^{\infty}(\D)$ is the \v{S}ilov boundary of $H^\infty(\D)$, there is at least one $\psi\in \partial_S H^{\infty}(\D)$ such that
\[
\psi(g)=1.
\]
This clearly implies that
\[
\psi\in \cam_1^{g}\cap \partial_S H^{\infty}(\D),
\]
and hence
\[
\begin{split}
\cam_1^{g}\cap \partial_S H^{\infty}(\D) & = \cam^{f_{i_1}, f_{i_2}, \ldots, f_{i_n}}\cap \partial_S H^{\infty}
\\
& = \cap_{j=1}^n\cam_1^{f_{i_j}}\cap \partial_S H^{\infty}(\D)
\\
& \neq \emptyset.
\end{split}
\]
Therefore, the collection $\{\cam_1^{f_i}\cap \partial_S H^{\infty}(\D)\}_{i\in I}$ satisfies the FIP, from which one concludes that $P\neq \emptyset$.
\end{proof}

Therefore, we have the following: If $Q$ is a $p$-set of $H^{\infty}(\D)$, then
\[
Q \cap \partial_S H^{\infty}(\D) \neq \emptyset.
\]
For each $\alpha\in \D$, denote by $ev_\alpha$ the evaluation functional at $\alpha$, that is
\[
ev_\alpha (f) = f(\alpha) \qquad (f \in H^\infty(\D)).
\]
The following is a consequence of Theorem \ref{non}:
 
\begin{corollary}\label{cor M ideal ev}
Let $\alpha\in \D$. Then $\{\alpha\}$ is not a $p$-set of $H^\infty(\D)$. In particular, the maximal ideal $\ker \text{ev}_\alpha$ is not an $M$-ideal in $H^\infty(\D)$.
\end{corollary}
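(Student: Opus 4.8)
The plan is to deduce both assertions from Theorem \ref{non}, using the identification $\partial_S H^\infty(\D) = \tau(\clm(L^\infty(\T)))$ recorded in \eqref{eqn: tau M H}. The decisive point is to check that for $\alpha \in \D$ the evaluation functional $ev_\alpha$, regarded as an element of $\clm(H^\infty(\D))$, does not lie in the \v{S}ilov boundary; everything else is a short formal consequence.

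First I would show $ev_\alpha \notin \partial_S H^\infty(\D)$ for $\alpha \in \D$. Consider the coordinate function $z \in S_{H^\infty(\D)}$, whose radial limit $\tilde z$ is unimodular a.e.\ on $\T$, so that $\tilde z\,\overline{\tilde z} = 1$ in $L^\infty(\T)$. For any $\varphi \in \clm(L^\infty(\T))$, multiplicativity together with $\varphi(1) = 1$ gives $\varphi(\tilde z)\varphi(\overline{\tilde z}) = 1$, while $|\varphi(\tilde z)|, |\varphi(\overline{\tilde z})| \leq \|\tilde z\|_\infty = 1$; hence $|\varphi(\tilde z)| = 1$. By \eqref{eqn: tau M H} this means $|\hat z(\psi)| = 1$ for every $\psi \in \partial_S H^\infty(\D)$. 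Since $\hat z(ev_\alpha) = \alpha$ with $|\alpha| < 1$, the functional $ev_\alpha$ cannot belong to $\partial_S H^\infty(\D)$. Consequently $\{ev_\alpha\}$ is a nonempty subset of $\clm(H^\infty(\D)) \setminus \partial_S H^\infty(\D)$, and Theorem \ref{non} immediately yields that $\{ev_\alpha\}$ (that is, $\{\alpha\}$) is not a $p$-set of $H^\infty(\D)$, settling the first assertion.

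For the ``in particular'' part I would argue by contradiction via Theorem \ref{hd}. Suppose $\ker ev_\alpha$ is an $M$-ideal; then it is the annihilator of some $p$-set $P \subseteq \clm(H^\infty(\D))$, i.e.\ $\ker ev_\alpha = \{f \in H^\infty(\D) : \hat f|_P \equiv 0\}$. For each $\psi \in P$ one has $\ker ev_\alpha \subseteq \ker \psi$; since $\ker ev_\alpha$ is a maximal ideal and $\ker \psi$ is proper, this forces $\ker \psi = \ker ev_\alpha$, and because a complex homomorphism is determined by its kernel, $\psi = ev_\alpha$. As $P \neq \emptyset$ (otherwise the annihilator would be all of $H^\infty(\D)$), we conclude $P = \{ev_\alpha\}$, so $\{ev_\alpha\}$ would be a $p$-set, contradicting the first assertion. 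Hence $\ker ev_\alpha$ is not an $M$-ideal.

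The main obstacle is the first step, namely the non-membership $ev_\alpha \notin \partial_S H^\infty(\D)$; the unimodularity argument through $\clm(L^\infty(\T))$ is what makes it clean, and without the identification \eqref{eqn: tau M H} one would be forced to invoke the maximum modulus principle more delicately. The remaining steps are routine applications of Theorems \ref{non} and \ref{hd}.
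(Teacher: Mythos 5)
Your proposal is correct and takes essentially the same route as the paper, which presents this corollary as a direct consequence of Theorem \ref{non} (combined with Theorem \ref{hd}). The paper leaves implicit the two details you spell out --- that $ev_\alpha \notin \partial_S H^\infty(\D)$ (via the unimodularity of $\hat{z}$ on $\tau(\clm(L^\infty(\T)))$) and that a $p$-set whose annihilator equals the maximal ideal $\ker ev_\alpha$ must be $\{ev_\alpha\}$ itself --- and both of your verifications are accurate.
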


Therefore, any subset of the unit disc $\D$ is not a $p$-set of $H^\infty(\D)$. On the other hand, the equivalence of (i) and (ii) in Theorem \ref{hd} says that the $M$-ideals in a uniform algebra $A$ are precisely of the form
\begin{equation}\label{eqn JP}
J_P=\{f\in A : f|_P =0\},
\end{equation}
where $P\subseteq K$ is a $p$-set of $A$. In general, we use $J_P$ to denote the set of all functions from the uniform algebra under consideration that vanish on a set $P \subseteq K$.

Typically, $M$-ideals in Banach spaces do not have a direct correlation (but some analogy; see the remark in the first paragraph of Section \ref{sec Intro}) with the notion of ideals in rings. Nevertheless, the above representations of $M$-ideals promptly reveals that this is not the case for uniform algebras.

\begin{corollary}\label{cor M are ideals}
Let $A$ be a uniform algebra. Then $M$-ideals in $A$ are ideals in the ring $A$.
\end{corollary}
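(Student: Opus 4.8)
The plan is to reduce the statement entirely to the structural characterization already recorded in Theorem \ref{hd}, since once an $M$-ideal is identified with an annihilator set, being an ideal is a formality. Let $J$ be an $M$-ideal in the uniform algebra $A$, viewed concretely as a closed subalgebra of $C(K)$ for the compact Hausdorff space $K$. By the equivalence of (i) and (ii) in Theorem \ref{hd}, there is a $p$-set $P \subseteq K$ for which $J = J_P$, where $J_P = \{f \in A : f|_P = 0\}$ as in \eqref{eqn JP}. It therefore suffices to verify that every set of the form $J_P$ is an ideal of the ring $A$.

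That verification is immediate and forms the single substantive step. First, $J_P$ is a linear subspace, because the restriction map $f \mapsto f|_P$ is linear and $J_P$ is exactly its kernel. Second, $J_P$ absorbs multiplication by $A$: if $f \in J_P$ and $g \in A$, then $(fg)|_P = (f|_P)(g|_P) = 0$, using that $A \subseteq C(K)$ consists of honest functions on $K$, so that multiplication in $A$ is pointwise and restriction is multiplicative. Hence $fg \in J_P$, and since $A$ is commutative this makes $J_P$ a (necessarily two-sided) ideal.

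Alternatively, I would note that one can bypass the $p$-set picture altogether and appeal instead to the equivalence of (i) and (iii) in Theorem \ref{hd}: that equivalence already asserts that an $M$-ideal $J$ in a uniform algebra is an ideal of $A$ (indeed one carrying a bounded approximate unit), which is precisely the claim. Either route is a direct application of Theorem \ref{hd}, so there is no genuine obstacle to overcome. The only point worth stating explicitly is the one used in the displayed computation above, namely that the ambient inclusion $A \subseteq C(K)$ forces the defining vanishing condition of $J_P$ to be stable under multiplication by arbitrary elements of $A$; this is what upgrades the subspace $J_P$ to an ideal and distinguishes the uniform-algebra setting from the general Banach-space theory of $M$-ideals, where no such multiplicative structure is available.
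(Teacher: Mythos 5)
Your proof is correct and follows essentially the same route as the paper: the corollary is stated there as an immediate consequence of the representation $J = J_P$ from the equivalence of (i) and (ii) in Theorem \ref{hd}, with the ideal property following from the multiplicativity of restriction, exactly as in your main argument. Your alternative observation via (iii) is also valid and equally direct.
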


Considering an $M$-ideal $J$ in $H^\infty(\D)$, we will now establish a connection with two possible $p$-set representations of $H^\infty(\D)$ corresponding to $J$. Establishing such a link is inescapable from the perspective of two representations of the uniform algebra $H^\infty(\D)$ over the compact sets $\clm(H^\infty(\D))$ and $\partial_S H^{\infty}(\D)$. To be more precise, if $J$ is an $M$-ideal in $H^\infty(\D)$, then according to the representation \eqref{eqn JP}, there are two subsets $P \subseteq \clm(H^\infty(\D))$ and $Q \subseteq \partial_S H^{\infty}(\D)$ such that
\[
J = J_P = J_{Q}.
\]
In the following, we aim to establish a correlation between $P$ and $Q$. Part of the proof follows the lines of Hoffman \cite[page 187]{KH}.

\begin{theorem}
Let $J$ be an $M$-ideal in $H^\infty(\D)$. Suppose
\[
J = J_P,
\]
where $P \subseteq \clm(H^\infty(\D))$ is the corresponding $p$-set. Then
\[
J = J_{P \cap \partial_S H^{\infty}(\D)}.
\]
\end{theorem}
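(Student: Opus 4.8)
The plan is to prove the two inclusions $J_P \subseteq J_{P \cap \partial_S H^{\infty}(\D)}$ and $J_{P \cap \partial_S H^{\infty}(\D)} \subseteq J_P$ separately. The first is immediate: since $P \cap \partial_S H^{\infty}(\D) \subseteq P$, any function whose Gelfand transform vanishes on $P$ vanishes a fortiori on the smaller set, so $J_P \subseteq J_{P \cap \partial_S H^{\infty}(\D)}$. All the content lies in the reverse inclusion, and the key tool will be representing measures carried by the \v{S}ilov boundary.

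Recall that, because $H^\infty(\D)$ is a uniform algebra and $\partial_S H^{\infty}(\D)$ is a closed boundary, every $\vp_0 \in \clm(H^\infty(\D))$ admits a probability measure $m_{\vp_0}$ on $\partial_S H^{\infty}(\D)$ with $\hat g(\vp_0) = \int_{\partial_S H^{\infty}(\D)} \hat g \, dm_{\vp_0}$ for all $g \in H^\infty(\D)$; this comes from a Hahn--Banach extension of the norm-one functional $\hat g \mapsto \vp_0(g)$ on $\widehat{H^\infty(\D)}|_{\partial_S H^{\infty}(\D)}$ followed by the Riesz representation theorem, using that $\Gamma$ is isometric onto its image on $\partial_S H^{\infty}(\D)$. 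Write the $p$-set as $P = \bigcap_{i \in I} \cam_1^{f_i}$ with $f_i \in S_{H^\infty(\D)}$; after multiplying each peak function by a unimodular constant we may assume the peak value is $1$, exactly as in the proof of Theorem \ref{non}. Now fix $\vp_0 \in P$. For every $i$ we have $\hat{f_i}(\vp_0) = 1$, hence $\int (1 - \mathrm{Re}\,\hat{f_i})\, dm_{\vp_0} = 0$; since $\mathrm{Re}\,\hat{f_i} \le |\hat{f_i}| \le 1$, the integrand is nonnegative, which forces $\hat{f_i} = 1$ holding $m_{\vp_0}$-almost everywhere.

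The main technical point is to upgrade this almost-everywhere statement to a containment of the closed support $\mathrm{supp}(m_{\vp_0})$, so as to sidestep the difficulty that the index set $I$ may be uncountable (an uncountable union of null sets need not be null). Since $\hat{f_i}$ is continuous on $\partial_S H^{\infty}(\D)$ and equals $1$ almost everywhere, it must in fact equal $1$ at every point of $\mathrm{supp}(m_{\vp_0})$: were $\hat{f_i}(x) \neq 1$ at some support point $x$, continuity would give $\hat{f_i} \neq 1$ on a neighbourhood of $x$, a set of positive $m_{\vp_0}$-measure, contradicting the a.e.\ statement. Thus $\mathrm{supp}(m_{\vp_0}) \subseteq \cam_1^{f_i} \cap \partial_S H^{\infty}(\D)$ for each $i$, and intersecting over $i$ yields $\mathrm{supp}(m_{\vp_0}) \subseteq P \cap \partial_S H^{\infty}(\D)$.

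With this in hand the reverse inclusion follows at once: if $f \in J_{P \cap \partial_S H^{\infty}(\D)}$, then $\hat f \equiv 0$ on $P \cap \partial_S H^{\infty}(\D)$, so for each $\vp_0 \in P$ we get $\hat f(\vp_0) = \int_{\mathrm{supp}(m_{\vp_0})} \hat f \, dm_{\vp_0} = 0$; hence $\hat f|_P = 0$, that is, $f \in J_P$. I expect the only genuine obstacle to be this passage from the uncountable intersection to a single support containment, which the continuity-on-the-support argument resolves cleanly; everything else reduces to the standard representing-measure machinery together with the isometry of the Gelfand transform on $\partial_S H^{\infty}(\D)$.
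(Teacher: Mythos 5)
Your proof is correct and takes essentially the same approach as the paper: fix a point of $P$, take its representing measure on $\partial_S H^{\infty}(\D)$, show that measure is carried by each $\cam_1^{f_i} \cap \partial_S H^{\infty}(\D)$, intersect to land in $P \cap \partial_S H^{\infty}(\D)$, and conclude by integrating a function that vanishes there. The only divergence is the technical device for the concentration step — the paper integrates powers of $(1+\hat{f_i})/2$ and applies dominated convergence, whereas you use nonnegativity of $1-\mathrm{Re}\,\hat{f_i}$ together with continuity at support points, an equally valid variant that also makes the passage through the possibly uncountable intersection explicit (a point the paper treats only implicitly via the ``minimal support'').
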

\begin{proof}
Given the representing $p$-set $P \subseteq \clm(H^\infty(\D))$ of the $M$-ideal $J$, we define
\[
Q = P \cap \partial_S H^{\infty}(\D).
\]
By Theorem \ref{non}, we know that $Q\neq \emptyset$. Let $\{f_i\}_{i\in I} \subseteq S_{H^\infty(\D)}$ be the family of functions corresponding to the $p$-set $P$. Then (see the notation preceding Theorem \ref{non})
\[
P = \bigcap_{i \in I} \cam_1^{f_i}.
\]
Fix an $i\in I$, and pick $\psi\in \clm_1^{{f_i}}$. Denote by $m_\psi$ the unique representing measure of $\psi$ on $\partial_S H^{\infty}(\D)$. We claim that $m_\psi$ is supported on $\clm_1^{{f_i}}\cap \partial_S H^{\infty}(\D)$. To see this, first we set
\[
h=\frac{1+f_i}{2}.
\]
For each $n \geq 1$, we observe that
\[
\begin{split}
\int_{\partial_S H^{\infty}(\D)} \hat{h}^n dm_\psi & = \psi(h^n)
\\
& =1.
\end{split}
\]
On the other hand, the sequence of functions $\{\hat{h}^n\}_{n \geq 1}$ is bounded and
\[
\hat{h}^n \longrightarrow \chi_{\clm_1^{f_i}},
\]
pointwise, where $\chi_{\clm_1^{f_i}}$ denotes the indicator function of $\clm_1^{f_i}$. By the dominated convergence theorem, we have
\[
1 = \lim_n\int_{\partial_S H^{\infty}(\D)} \hat{h}^n dm_\psi = \int_{\partial_S H^{\infty}(\D)} \hat{\chi}_{\clm_1^{f_i}} dm_\psi = \int_{\clm_1^{f_i}\cap \partial_S H^{\infty}(\D)} dm_\psi,
\]
and hence the measure $m_\psi$ is supported on ${\clm_1^{f_i}\cap \partial_S H^{\infty}(\D)}$, completing the proof of the claim. Since $P = \bigcap_{i\in I}\clm_1^{f_i}$, it follows that each $\psi$ in $P$ is supported on $\clm_1^{{f_i}}\cap \partial_S H^{\infty}(\D)$ for all $i \in I$. Hence the minimal support of $\psi$ is contained in
\[
\bigcap_{i\in I}\left(\clm_1^{f_i}\cap \partial_S H^{\infty}(\D) \right) = P \cap \partial_S H^{\infty}(\D) = Q.
\]
Therefore, for any $f\in J_{P\cap \partial_S H^{\infty}(\D)} = J_Q$ and $\psi \in P$, we have
\[
\psi(f) = \int_{P \cap\partial_S H^{\infty}(\D)}\hat{f} dm_\psi = 0,
\]
and hence $J_Q\subseteq J_P$. Since $Q\subseteq P$, it also follows that $J_Q\subseteq J_P$, and consequently
\[
J_P=J_Q,
\]
which completes the proof of the theorem.
\end{proof}

\section{Analytic primes}\label{sec prime}

This section will emphasise some significant features of $M$-ideals in $H^\infty(\D)$. The properties of $M$-ideals that are being addressed here are also applicable to the polydisc $\D^n$ in $\mathbb{C}^n$, $n \geq 1$. Now, the definition of $H^\infty(\D^n)$ is comparable to that of the single variable:
\[
H^\infty(\D^n) = \{f \in \text{Hol}(\D^n): \|f \|:= \sup_{z \in \D^n} |f(z)| < \infty\}.
\]
By the same argumentation, $H^\infty(\D^n)$ is also a uniform algebra, and so Theorem \ref{hd} applies to $H^\infty(\D^n)$.

We stated at the very beginning of Section \ref{sec Intro} that the notion of $M$-ideals was proposed in \cite{AE} as a generalization of two-sided ideals in Banach spaces. Our key result in this section is yet another algebraic property of $M$-ideals. Recall that an ideal $P$ in a commutative ring $R$ is called prime if $P \neq R$ and if $a$ and $b$ are two elements of $R$ such that
\[
ab \in P,
\]
then either $a \in P$ or $b \in P$. This motivates the following analytic definition of prime ideals in $H^\infty(\D^n)$:

\begin{definition}\label{def prime}
Let $J$ be a proper nontrivial closed subspace of $H^\infty(\D^n)$. We call $J$ an analytic prime as long as the following property holds true: If $f$ and $g$ are two elements of $H^\infty(\D^n)$ such that
\[
fg \in J,
\]
and
\[
|\tilde{f}(z)| > \delta \qquad (z \in \T^n),
\]
for some $\delta > 0$, then
\[
g \in J.
\]
\end{definition}

Note that, just as in the single variable case, here also a function $f \in H^\infty(\D^n)$ admits a boundary value $\tilde{f}$ a.e. on the $n$-torus $\T^n$ \cite{WR book}. It ought to be observed that the notion of analytic primes has the potential to extend to a broad uniform algebra (one perhaps requires finding a suitable replacement for $\T^n$). It is also clear that the concept of an analytic prime is more potent than prime ideals in $H^\infty(\D^n)$ (or a uniform algebra). This also pertains to the structure of $M$-ideals in $H^\infty(\D^n)$. Specifically:

\begin{theorem}\label{prop1}
Proper nontrivial $M$-ideals in $H^\infty(\mathbb{D}^n)$ are analytic primes.
\end{theorem}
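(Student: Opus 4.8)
The plan is to exploit the $p$-set description of $M$-ideals furnished by Theorem \ref{hd}, combined with the identification of the \v{S}ilov boundary of $H^\infty(\D^n)$ with the maximal ideal space of $L^\infty(\T^n)$. Let $J$ be a proper nontrivial $M$-ideal, and fix $f, g \in H^\infty(\D^n)$ with $fg \in J$ and $|\tilde f| > \delta$ a.e.\ on $\T^n$. By the equivalence of (i) and (ii) in Theorem \ref{hd}, there is a $p$-set $P \subseteq \clm(H^\infty(\D^n))$ with $J = J_P = \{h : \hat h|_P = 0\}$. Invoking the several-variable analogue of the preceding theorem (the one asserting $J = J_{P \cap \partial_S H^\infty(\D)}$), I may replace $P$ by $Q := P \cap \partial_S H^\infty(\D^n)$, so that $J = J_Q$ with $Q \subseteq \partial_S H^\infty(\D^n)$ and, by Theorem \ref{non}, $Q \neq \emptyset$. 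Once this reduction is in place, the entire statement collapses to a non-vanishing assertion: it suffices to show that $\hat f$ is bounded below by $\delta$ on the \v{S}ilov boundary.

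Establishing this non-vanishing is the heart of the argument. Here I use that $\partial_S H^\infty(\D^n)$ is homeomorphic to $\clm(L^\infty(\T^n))$ via $\tau(\psi) = \psi|_{H^\infty(\D^n)}$, under the identification of $f$ with its boundary function $\tilde f \in L^\infty(\T^n)$; consequently $\hat f(\tau(\psi)) = \psi(\tilde f)$ for every $\psi \in \clm(L^\infty(\T^n))$. Since $L^\infty(\T^n)$ is a commutative $C^*$-algebra, the hypothesis $|\tilde f| > \delta$ a.e.\ means exactly that $\tilde f$ is invertible in $L^\infty(\T^n)$ with $\|\tilde f^{-1}\|_\infty \le 1/\delta$, so its spectrum (equivalently, its essential range) lies in $\{w : |w| \ge \delta\}$. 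Hence $|\psi(\tilde f)| \ge \delta$ for all $\psi$, that is $|\hat f(\varphi)| \ge \delta$ for every $\varphi \in \partial_S H^\infty(\D^n)$, and in particular for every $\varphi \in Q$.

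With these two facts the conclusion is immediate: from $fg \in J = J_Q$ we get $\hat f(\varphi)\hat g(\varphi) = 0$ for all $\varphi \in Q$, and since $|\hat f(\varphi)| \ge \delta > 0$ forces $\hat f(\varphi) \neq 0$, we obtain $\hat g|_Q = 0$, i.e.\ $g \in J_Q = J$. The main obstacle, and the reason the reduction to the \v{S}ilov boundary cannot be skipped, is that the boundary hypothesis controls $\hat f$ only on $\partial_S H^\infty(\D^n)$: a Blaschke product, for instance, satisfies $|\tilde f| \equiv 1$ yet has $\hat f$ vanishing at points of $\clm(H^\infty(\D^n)) \setminus \partial_S H^\infty(\D^n)$, so the implication would genuinely fail if $P$ were allowed to retain such points. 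The remaining care is therefore to confirm that the reduction $P \mapsto P \cap \partial_S H^\infty(\D^n)$ and the identification $\partial_S H^\infty(\D^n) = \tau(\clm(L^\infty(\T^n)))$ both carry over verbatim to the polydisc, which is precisely where the measure-theoretic input (representing measures supported on the \v{S}ilov boundary, as in the proof of the preceding theorem) is needed.
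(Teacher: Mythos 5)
Your proposal is correct, but it follows a genuinely different route from the paper's. The paper uses the equivalence (i)$\Leftrightarrow$(iii) of Theorem \ref{hd}: it takes a bounded approximate unit $\{f_\lambda\}$ in $J$, extracts a sequence with $\|fg f_{\lambda_m} - fg\| \leq 1/m$, and divides by $|\tilde{f}| > \delta$ on $\T^n$ (using the boundary-value isometry) to obtain $\|g f_{\lambda_m} - g\| \leq 1/(\delta m)$; since each $g f_{\lambda_m}$ lies in the ideal $J$ and $J$ is closed, $g \in J$. That argument is self-contained: beyond Theorem \ref{hd} it needs only that $\|h\|_{H^\infty(\D^n)}$ equals the essential supremum of $|\tilde{h}|$ on $\T^n$. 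Your route instead goes through (i)$\Leftrightarrow$(ii), the reduction $J = J_{P \cap \partial_S H^\infty(\D^n)}$, and spectral theory in the $C^*$-algebra $L^\infty(\T^n)$; it makes the geometric content transparent ($M$-ideals are zero sets of $p$-sets on the \v{S}ilov boundary, and the hypothesis exactly forbids $\hat{f}$ from vanishing there), and it actually yields slightly more: $g \in J$ already follows if $\hat{f}$ is merely non-vanishing on the representing set $Q$, with no uniform lower bound needed there. The price is two polydisc extensions that you invoke but do not prove, and which the paper establishes only for $n=1$: (a) the reduction theorem of Section \ref{sec p sets} --- its proof does extend, since representing measures supported on the \v{S}ilov boundary exist for every complex homomorphism of any uniform algebra and the dominated-convergence argument is general (the \emph{uniqueness} of representing measures used nominally in the paper's proof for $\D$ is unavailable for $n>1$, but it is also never needed); (b) the identification $\partial_S H^\infty(\D^n) = \tau(\clm(L^\infty(\T^n)))$ --- here you claim more than you need: minimality (that the image of $\tau$ is exactly the \v{S}ilov boundary) is the hard direction and is what the paper cites Hoffman for when $n=1$, but your argument only requires the inclusion $\partial_S H^\infty(\D^n) \subseteq \tau(\clm(L^\infty(\T^n)))$, which is the easy direction: the isometric embedding of $H^\infty(\D^n)$ into $L^\infty(\T^n)$ makes $\tau(\clm(L^\infty(\T^n)))$ a closed boundary, and the \v{S}ilov boundary is contained in every closed boundary. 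With (a) written out and (b) weakened to the inclusion, your proof stands; the paper's argument remains the shorter and more elementary of the two, precisely because it bypasses the \v{S}ilov boundary machinery altogether.
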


\begin{proof}
Let $J$ be a proper nontrivial $M$-ideal in $H^\infty(\mathbb{D}^n)$. Since $J$ is an $M$-ideal and $H^\infty(\mathbb{D}^n)$ is a uniform algebra, it follows that $J$ contains a bounded approximate unit $\{f_\lambda\}_ {\lambda \in \Lambda}$. Let $f, g \in H^\infty(\D^n)$, and suppose $f g \in J$. Suppose there is a $\delta > 0$ such that
\[
|\tilde{f}(z)| > \delta \qquad (z \in \T^n).
\]
Fix $\varepsilon> 0$. There exists $\lambda_\varepsilon \in \Lambda$ such that
\[
\|f g f_\lambda - f g \| \leq \varepsilon,
\]
for all $\lambda \geq \lambda_\varepsilon$. Since this is the case, there exists a sequence $\{\lambda_m\} \subseteq \Lambda$ such that
\[
\|f g f_{\lambda_m}- f g\| \leq \frac{1}{m},
\]
for all $m \geq 1$. In view of the identification of functions via radial limits, we now observe that, for functions in $H^\infty(\D^n)$, the supremum on the boundary is sufficient to take into account. Section \ref{sec Prelim} describes the identification for this $n=1$ scenario, whereas the $n>1$ case works similarly. Fix an integer $m \geq 1$. For all $z \in \T^n$, we have
\[
\begin{split}
\delta | \tilde{g}(z) \tilde{f}_{\lambda_m}(z) - \tilde{g}(z)| & \leq |\tilde{f}(z)||\tilde{g}(z) \tilde{f}_{\lambda_m}(z) - \tilde{g}(z)|
\\
& = |\tilde{f}(z) \tilde{g}(z) \tilde{f}_{\lambda_m}(z)- \tilde{f}(z) \tilde{g}(z)|.
\end{split}
\]
Taking the supremum over all $z \in \T^n$ gives
\[
\delta \| \tilde{g} \tilde{f}_{\lambda_m}- \tilde{g} \| \leq | | \tilde{f}\tilde{g}  \tilde{f}_{\lambda_m}- \tilde{f}\tilde{g} \|,
\]
and hence
\[
\|\tilde{g} \tilde{f}_{\lambda_m}- \tilde{g}\| \leq \frac{1}{\delta}\frac{1}{m},
\]
for all $m \geq 1$. Since
\[
\tilde{g} \tilde{f}_{\lambda_m} \in J \qquad (m \geq 1),
\]
it follows that $g \in J$. Thus, the proof of the theorem is concluded.
\end{proof}

The analytic prime property of $M$-ideals in $H^\infty(\D)$ will be frequently used in what follows. We now present one implication from the preceding result, which is of independent relevance in terms of the theory of bounded analytic functions and the theory of $M$-ideals. Recall that a function $\upsilon\in H^\infty(\D^n)$ is said to be \textit{inner} if
\[
|\tilde\upsilon(z)| = 1,
\]
for all $z \in \T^n$ a.e. Now we prove that an $M$-ideal never intersects the multiplicative set of inner functions. 

\begin{corollary}\label{inn}
Let $J$ be a nontrivial and proper $M$-ideal in $H^\infty(\D^n)$. Then $J$ does not have any inner functions.
\end{corollary}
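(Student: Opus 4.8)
The plan is to argue by contradiction and to exploit the analytic prime property established in Theorem \ref{prop1}. Suppose that $J$ does contain some inner function $\upsilon \in H^\infty(\D^n)$. The crucial observation is that an inner function is uniformly bounded below in modulus on the distinguished boundary: since $|\tilde{\upsilon}(z)| = 1$ for almost every $z \in \T^n$, we certainly have $|\tilde{\upsilon}(z)| > \tfrac{1}{2}$ a.e. on $\T^n$. This is precisely the lower bound demanded by the hypothesis of Definition \ref{def prime}, and it is the only place where the inner condition is used.

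Next I would factor $\upsilon$ trivially as $\upsilon = \upsilon \cdot 1$ and apply the analytic prime condition with $f = \upsilon$, $g = 1$, and $\delta = \tfrac{1}{2}$. Indeed, $fg = \upsilon \in J$ by assumption, and $|\tilde{f}(z)| = |\tilde{\upsilon}(z)| > \tfrac{1}{2}$ a.e. on $\T^n$. Since $J$ is a proper nontrivial $M$-ideal in $H^\infty(\D^n)$, Theorem \ref{prop1} guarantees that $J$ is an analytic prime, and hence the defining property of analytic primes forces $g = 1 \in J$.

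Finally I would derive the contradiction. By Theorem \ref{hd} (the equivalence of (i) and (iii)), or equivalently by Corollary \ref{cor M are ideals}, the $M$-ideal $J$ is an ideal of the ring $H^\infty(\D^n)$. Thus $1 \in J$ immediately yields $h = h \cdot 1 \in J$ for every $h \in H^\infty(\D^n)$, so that $J = H^\infty(\D^n)$. This contradicts the assumption that $J$ is proper, and therefore $J$ cannot contain any inner function. Because every step is a direct substitution into results already in hand, there is no real obstacle here; the single point requiring a moment's care is translating the pointwise-a.e. modulus-one condition for $\upsilon$ into the uniform lower bound $\delta = \tfrac{1}{2}$ required by Definition \ref{def prime}, which is immediate. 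All the genuine content of the corollary is carried by Theorem \ref{prop1}.
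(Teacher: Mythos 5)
Your proof is correct and is essentially identical to the paper's own argument: both factor the inner function $\upsilon = \upsilon \cdot 1$, invoke Theorem \ref{prop1} with the uniform lower bound $|\tilde{\upsilon}| = 1 > \delta$ a.e.\ to conclude $1 \in J$, and then use the ideal property (Corollary \ref{cor M are ideals}) to contradict properness. No gaps; the only difference is that you spell out the choice $\delta = \tfrac{1}{2}$ explicitly, which the paper leaves implicit.
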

\begin{proof}
Recall from Corollary \ref{cor M are ideals} that $J$ is a (two sided) ideal of $H^\infty(\D^n)$. Suppose that there exists an inner function $\upsilon \in H^\infty(\D^n)$ such that $\upsilon \in J$. Then, by writing
\[
\upsilon = 1 \times \upsilon,
\]
we conclude from Theorem \ref{prop1} that $1 \in J$. This is contrary to the fact that $J$ is proper.
\end{proof}

There is an alternate proof of the above, which is an application of Theorem \ref{non} and representations of $M$-ideals in uniform algebras. Recall from \eqref{eqn JP} that there exists $P \subseteq \clm(H^\infty(\D^n))$ such that $J = J_P$. Now Theorem \ref{non} implies that $P$ must meet the \v{S}ilov boundary of $H^\infty(\D^n)$. However, inner functions do not vanish on the \v{S}ilov boundary \cite[page 179]{KH}.

Recall that singular inner functions are those inner functions that are zero-free. As a visual, for each $\alpha \in \T$, the function
\[
\upsilon_\alpha(z):= \exp\Big(\frac{z+\alpha}{z-\alpha}\Big) \qquad (z \in \D),
\]
is a singular inner function in $H^\infty(\D)$. We illustrate the above result through the following example:

\begin{example}
Consider the closed ideal $J$ in $H^\infty(\D)$, where
\[
J = \{f \in H^\infty(\D): \lim_{r\rightarrow 1} f(r)=0\}.
\]
It is easy to see that $\upsilon_1 \in J$. Hence, by Corollary \ref{inn}, it follows that $J$ is not an $M$-ideal in $H^\infty(\D)$.
\end{example}

Now we present another application of Theorem \ref{prop1}. This time, it will be applied to the Hardy space $H^2(\D^n)$. Recall that \cite{WR book}
\[
H^2(\D^n) = \Big\{f \in \text{Hol}(\D^n): \|f\|_2:= \Big(\sup_{0<r<1} \int_{\T^n} |f(rz)|^2 d\mu(z)\Big)^{\frac{1}{2}} < \infty\Big\},
\]
where $d\mu$ denotes the normalized Lebesgue measure on $\T^n$ and $rz =(rz_1, \ldots, z_n)$. In view of the radial limits, we have the following isometric embedding:
\[
H^2(\D^n) \hookrightarrow L^2(\T^n).
\]
Moreover, it is well-known that
\begin{equation}\label{eqn dens of poly}
\overline{\mathbb{C}[z_1, \ldots, z_n]}^{H^2(\D^n)} = H^2(\D^n).
\end{equation}
We also need some operator theoretic concepts that are classic in nature. Given a function $\vp \in H^\infty(\D^n)$, we define $T_\vp$ acting on $H^2(\D^n)$ by
\[
T_\vp f = \vp f \qquad (f \in H^2(\D^n)).
\]
We call $T_\vp$ the \textit{Toeplitz operator} with the symbol $\vp$. To follow the convention, we choose to abuse the notation here; that is, $T_\vp$ instead of $T_f$. The symbols with the independent variables $\{z_1, \ldots, z_n\}$ are known as distinguished Toeplitz operators. In this case, we have
\[
T_{z_i} f = z_i f \qquad (f \in H^2(\D^n)),
\]
for all $i=1, \ldots, n$. We now show that the density assertion in \eqref{eqn dens of poly} is true for each $M$-ideal in $H^\infty(\D^n)$.

\begin{theorem}\label{thm M is dense}
If $J$ is a nontrivial $M$-ideal in $H^\infty(\mathbb{D}^n)$, then
\[
\overline{J}^{H^2(\D^n)} = H^2(\mathbb{D}^n).
\]
\end{theorem}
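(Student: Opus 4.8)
The plan is to reduce the assertion to the single statement that the constant function $1$ lies in $\overline{J}^{H^2(\D^n)}$. Indeed, since $J$ is an ideal of $H^\infty(\D^n)$ by Corollary \ref{cor M are ideals}, each monomial multiplier $T_{z^\alpha}$ carries $J$ into $J$ and is a contraction (in fact an isometry) on $H^2(\D^n)$; by continuity it therefore maps $\overline{J}^{H^2(\D^n)}$ into itself. Consequently, once $1 \in \overline{J}^{H^2(\D^n)}$ is established, every monomial $z^\alpha = T_{z^\alpha}1$ and hence every polynomial lies in $\overline{J}^{H^2(\D^n)}$; the polynomial density \eqref{eqn dens of poly} then forces $\overline{J}^{H^2(\D^n)} = H^2(\D^n)$.

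To place $1$ in the $H^2$-closure of $J$, I would exploit the bounded approximate unit supplied by Theorem \ref{hd}: let $\{f_\lmd\}_{\lmd \in \Lmd} \subseteq J$ be a net with $\|f_\lmd\|_\infty \leq M$ for some $M$ and $\|h f_\lmd - h\|_\infty \raro 0$ for every $h \in J$. The aim is to show this net already converges to $1$ in $H^2(\D^n)$. Fix a nonzero $h \in J$ (available since $J$ is nontrivial). A key input is that the radial boundary function $\tilde h$ is nonzero a.e. on $\T^n$, because a nonzero element of $H^2(\D^n)$ has $\log|\tilde h| \in L^1(\T^n)$ (see \cite{WR book}). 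Passing to boundary values and using the isometry $\|\cdot\|_\infty = \|\widetilde{(\cdot)}\|_{L^\infty(\T^n)}$, the approximate-unit estimate becomes $\varepsilon_\lmd := \|\tilde h(\tilde f_\lmd - 1)\|_{L^\infty(\T^n)} \raro 0$.

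The main obstacle is that this weighted smallness says nothing about $\tilde f_\lmd - 1$ on the region where $\tilde h$ is small, so $\varepsilon_\lmd \raro 0$ by itself does not yield $L^2$-convergence. I would resolve this by a level-set splitting. For $\delta > 0$ set $E_\delta = \{z \in \T^n : |\tilde h(z)| \geq \delta\}$; on $E_\delta$ one has $|\tilde f_\lmd - 1| \leq \varepsilon_\lmd/\delta$, while on the complement the uniform bound gives $|\tilde f_\lmd - 1| \leq M + 1$ a.e. Hence
\[
\int_{\T^n} |\tilde f_\lmd - 1|^2 \, d\mu \leq \frac{\varepsilon_\lmd^2}{\delta^2} + (M+1)^2 \mu(\T^n \setminus E_\delta).
\]
Since $\tilde h \neq 0$ a.e., $\mu(\T^n \setminus E_\delta) \raro 0$ as $\delta \raro 0$; taking $\limsup_\lmd$ for fixed $\delta$ and then letting $\delta \raro 0$ shows $\|f_\lmd - 1\|_{H^2(\D^n)} \raro 0$, so $1 \in \overline{J}^{H^2(\D^n)}$, and the first paragraph finishes the argument. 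I would note that this proof uses only the bounded approximate unit together with the a.e.\ nonvanishing of boundary values, and does not invoke the analytic prime property of Theorem \ref{prop1}.
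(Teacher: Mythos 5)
Your proof is correct, but the core step is carried out by a genuinely different argument than the paper's. Both proofs make the same reduction (it suffices to put $1$ in $\overline{J}^{H^2(\D^n)}$, since $J$ is an ideal and polynomials are dense), and both start from the bounded approximate unit of Theorem \ref{hd}. From there the paper goes soft: it extracts a weakly convergent subsequence $\vp_{\lambda_{m_k}} \overset{w}{\raro} f$ in $H^2(\D^n)$, uses weak continuity of the Toeplitz operator $T_\vp$ together with $\|\vp\vp_{\lambda_m}-\vp\|_\infty \raro 0$ to get $\vp f = \vp$, invokes the elementary fact $\ker T_\vp = \{0\}$ (injectivity of multiplication by a nonzero analytic function, via the identity theorem) to conclude $f=1$, and finally passes from weak to norm closure using convexity of $J$. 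You instead work directly on the boundary: from $\|\tilde h(\tilde f_\lambda - 1)\|_{L^\infty(\T^n)} \raro 0$ you run a Chebyshev-type level-set splitting on $E_\delta = \{|\tilde h|\geq \delta\}$, which needs the function-theoretic input that a nonzero $h \in H^2(\D^n)$ has $\tilde h \neq 0$ a.e.\ on $\T^n$. That input is valid in the polydisc --- it follows from Rudin's theorem that $\log|\tilde h| \in L^1(\T^n)$ for nonzero functions in the Nevanlinna class, so your citation of \cite{WR book} is the right one --- but note it is a deeper fact in several variables than anything the paper's argument uses; the paper's route needs only weak compactness and the identity theorem. What your route buys in exchange is a strictly stronger conclusion: you show the approximate unit itself converges to $1$ in $H^2$-norm, rather than merely exhibiting $1$ as a weak limit of a subsequence and appealing to Mazur's theorem.
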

\begin{proof}
We know, by \eqref{eqn JP}, that
\[
J = \{\vp \in H^\infty(\D): \vp|_P = 0\},
\]
for some $p$-set of $H^\infty(\D^n)$. From here, it is easy to conclude that $J$ is invariant under $z_i$, $i=1, \ldots, n$. Therefore, $\overline{J}^{H^2(\D^n)}$ is also invariant under $z_i$, $i=1, \ldots, n$. It is then sufficient to show that $1$ is in $\overline{J}^{H^2(\D^n)}$. To accomplish this, we again use the bounded approximate unit of $M$-ideals as we did in the proof of Theorem \ref{prop1}. Let $\{\vp_\lambda\}_ {\lambda \in \Lambda}$ be a bounded approximate unit in $J$. As in the proof of Theorem \ref{prop1}, for a fixed $\vp \in J$, there exists a sequence $ \{\lambda_m\} \subseteq \Lambda$ such that
\begin{equation}\label{eq1}
\|\vp \vp_{\lambda_m}-  \vp\|_{H^\infty(\D^n)} \leq \frac{1}{m},
\end{equation}
for all $m \geq 1$. Note that $\{\vp_{\lambda_m}\}$ is a bounded sequence in $H^2(\mathbb{D}^n)$. Consequently, $\{\vp_{\lambda_m}\}$ has a weak convergent subsequence, and hence, there exists $f\in H^2(\mathbb{D}^n)$ such that
\[
\vp_{\lambda_{m_k}} \overset{w}\longrightarrow f,
\]
in $H^2(\D^n)$. Since the Toeplitz operator $T_\vp$ on $H^2(\mathbb{D}^n)$ is bounded, we obtain
\[
\vp \vp_{\lambda_{m_k}} \overset{w}\longrightarrow \vp f,
\]
in $H^2(\D^n)$. Also by \eqref{eq1}, we have
\[
\vp \vp_{\lambda_{m_k}} \overset{w}\longrightarrow \vp,
\]
in $H^2(\D^n)$. Here we are using the general fact that $H^\infty(\D^n)$ is contractively embedded in $H^2(\D^n)$. Therefore
\[
\vp f = \vp,
\]
equivalently
\[
f  - 1 \in \ker T_\vp.
\]
However, since $\vp$ is an analytic function, we know that
\[
\ker T_\vp = \{0\}.
\]
This effortlessly implies
\[
f = 1,
\]
and completes the proof of the theorem, as weak closure and norm closure are identical for convex sets.
\end{proof}

Recall that for a given $f \in H^\infty(\D^n)$, we denote by $I_{H^\infty(\D^n)}(f)$ the closed principal ideal generated by $f$ in $H^\infty(\D^n)$ (see\eqref{eq prin ideal}):
\[
I_{H^\infty(\D^n)}(f) = \overline{\{p f: p \in \mathbb{C}[z_1, \ldots, z_n]\}}^{H^\infty(\D^n)}.
\]

The following is a straight application of the above theorem, which establishes a clear link between $M$-ideals and classical operators such as Toeplitz operators.

\begin{corollary}\label{cor ker of TO}
Let $\vp \in H^\infty(\mathbb{D}^n)$. If $I_{H^\infty(\D^n)}(\vp)$ is an $M$-ideal in $H^\infty(\mathbb{D}^n)$, then
\[
\ker T_\vp^* = \{0\}.
\]
\end{corollary}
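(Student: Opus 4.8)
The plan is to translate the $M$-ideal hypothesis, through Theorem \ref{thm M is dense}, into a density statement for the range of $T_\vp$, and then invoke the standard Hilbert-space identity $\ker T_\vp^* = (\text{ran}\, T_\vp)^\perp$ valid for any bounded operator. The corollary is essentially a repackaging of Theorem \ref{thm M is dense}, so I expect the work to be bookkeeping rather than a genuine difficulty.

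First I would record that, by the definition $T_\vp f = \vp f$, the range of $T_\vp$ is exactly $\vp H^2(\D^n) = \{\vp f : f \in H^2(\D^n)\}$, so that $\ker T_\vp^* = (\vp H^2(\D^n))^\perp$. Consequently $\ker T_\vp^* = \{0\}$ if and only if $\vp H^2(\D^n)$ is dense in $H^2(\D^n)$. This reduces the corollary to establishing the $H^2$-density of $\vp H^2(\D^n)$.

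Next I would feed in the hypothesis. Since $J := I_{H^\infty(\D^n)}(\vp)$ is an $M$-ideal, Theorem \ref{thm M is dense} gives $\overline{J}^{H^2(\D^n)} = H^2(\D^n)$. It then remains only to compare $J$ with $\vp H^2(\D^n)$ inside $H^2$. By definition $J$ is the $H^\infty$-closure of $\{p\vp : p \in \mathbb{C}[z_1,\ldots,z_n]\}$, and each such $p\vp$ lies in $\vp H^2(\D^n)$ because polynomials belong to $H^2(\D^n)$. As the inclusion $H^\infty(\D^n) \hookrightarrow H^2(\D^n)$ is contractive, $H^\infty$-convergence forces $H^2$-convergence, so $J \subseteq \overline{\vp H^2(\D^n)}^{H^2(\D^n)}$. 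Passing to $H^2$-closures and using the density of $J$ yields $H^2(\D^n) = \overline{J}^{H^2} \subseteq \overline{\vp H^2(\D^n)}^{H^2}$, i.e. $\vp H^2(\D^n)$ is dense, whence $\ker T_\vp^* = \{0\}$.

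I do not anticipate a real obstacle here. The only point requiring care is the passage between the two topologies, namely verifying that the $H^\infty$-generated ideal $J$ still sits inside the $H^2$-closure of the analytic range $\vp H^2(\D^n)$; this rests entirely on the contractivity of the embedding noted in the preamble to Theorem \ref{thm M is dense}. Everything else is the standard orthogonality dictionary for bounded operators on Hilbert space.
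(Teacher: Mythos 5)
Your proposal is correct and follows essentially the same route as the paper: both identify $\ker T_\vp^* = (\operatorname{ran} T_\vp)^\perp$ and then deduce density of the range in $H^2(\D^n)$ from Theorem \ref{thm M is dense}. Your verification that $I_{H^\infty(\D^n)}(\vp) \subseteq \overline{\vp H^2(\D^n)}^{H^2(\D^n)}$ via the contractive embedding is exactly the bookkeeping the paper leaves implicit in its one-line identification of the two $H^2$-closures.
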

\begin{proof}
Note that $\ker T_\vp^* = (\text{ran} T_\vp)^{\perp}$. But
\[
\begin{split}
\overline{\text{ran} T_\vp}^{H^2(\D^n)} & = \overline{I_{H^\infty(\D^n)}(\vp)}^{H^2(\D^n)}
\\
& = H^2(\D^n).
\end{split}
\]
The result now follows immediately.
\end{proof}

In the first part of the proof of Theorem \ref{thm M is dense}, we verified that an $M$-ideal in $H^\infty(\D^n)$ is an invariant subspace of $H^2(\D^n)$. In this context, a subspace $\cls$ of $H^2(\D^n)$ is considered \textit{invariant} if
\[
z_i \cls \subseteq \cls \qquad (i=1, \ldots, n).
\]

\begin{corollary}
Let $J$ be a nontrivial closed ideal in $H^\infty(\mathbb{D}^n)$. If there exists a proper closed invariant subspace $\cls$ of $H^2(\D^n)$ such that
\[
J \subseteq \cls,
\]
then $J$ is not an $M$-ideal in $H^\infty(\D^n)$.
\end{corollary}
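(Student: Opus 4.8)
The plan is to obtain this as an immediate contrapositive of Theorem~\ref{thm M is dense}. The whole content has effectively already been extracted there: the $M$-ideal hypothesis forces $J$ to be dense in the Hardy space, and this is plainly incompatible with $J$ being trapped inside a proper closed subspace. So I would argue by contradiction, assuming that $J$ is simultaneously a nontrivial $M$-ideal in $H^\infty(\D^n)$ and contained in a proper closed invariant subspace $\cls$ of $H^2(\D^n)$.

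First I would invoke Theorem~\ref{thm M is dense}. Since $J$ is a nontrivial $M$-ideal, its closure in the Hardy space is the whole space,
\[
\overline{J}^{H^2(\D^n)} = H^2(\D^n).
\]
Here the inclusion $J \subseteq \cls$ is read through the contractive embedding $H^\infty(\D^n) \hookrightarrow H^2(\D^n)$ recalled in Section~\ref{sec prime}, so that $J$ and $\cls$ are compared inside the same ambient Hilbert space. Next, because $\cls$ is closed in $H^2(\D^n)$, passing to $H^2$-closures preserves the inclusion, giving
\[
H^2(\D^n) = \overline{J}^{H^2(\D^n)} \subseteq \overline{\cls}^{H^2(\D^n)} = \cls.
\]
This forces $\cls = H^2(\D^n)$, contradicting the hypothesis that $\cls$ is proper. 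Hence $J$ cannot be an $M$-ideal, which is the desired conclusion.

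There is genuinely no hard step here; the only mild subtlety is the bookkeeping that the inclusion $J \subseteq \cls$ lives in $H^2(\D^n)$ via the embedding, and that $H^2$-closure respects it because $\cls$ is already $H^2$-closed. I would also remark that the invariance assumption on $\cls$ is not actually used in the contradiction—any proper closed subspace of $H^2(\D^n)$ containing $J$ would suffice. The reason invariance is the natural hypothesis to state is that the first part of the proof of Theorem~\ref{thm M is dense} shows every ideal $J$ is itself $z_i$-invariant, so the candidate ambient subspaces capable of containing $J$ are precisely the closed invariant ones; phrasing the corollary this way is what connects it to the invariant-subspace theory of the several-variable Hardy space.
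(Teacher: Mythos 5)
Your proof is correct and is essentially identical to the paper's own argument: both invoke Theorem~\ref{thm M is dense} to get $H^2(\D^n) = \overline{J}^{H^2(\D^n)} \subseteq \cls \subsetneqq H^2(\D^n)$, a contradiction. Your side remark that the invariance of $\cls$ is never actually used is also accurate—the paper's proof does not use it either.
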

\begin{proof}
Since $\cls$ is a proper closed subspace of $H^2(\D^n)$, by Theorem \ref{thm M is dense}, it follows that $J$ is not an $M$-ideal; otherwise
\[
H^2(\D^n) \supsetneqq \cls \supseteq \overline{J}^{H^2(\D^n)} = H^2(\D^n),
\]
a contradiction.
\end{proof}

Given a function $f \in H^2(\D^n)$, define
\[
I_{H^2(\D^n)}(f) = \overline{\text{span}\{p f: p \in \mathbb{C}[z_1, \ldots, z_n]\}}^{H^2(\D^n)}.
\]
In other words, $I_{H^2(\D^n)}(f)$ is the smallest closed invariant subspace of $H^2(\D^n)$ containing $f$.

\begin{corollary}\label{c2}
Let $f \in H^\infty(\mathbb{D}^n)$. If $I_{H^\infty(\D^n)}(f)$ is an $M$-ideal in $H^\infty(\D^n)$, then
\[
I_{H^2(\D^n)}(f) = H^2(\D^n).
\]
\end{corollary}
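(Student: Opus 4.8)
The plan is to read off the conclusion directly from Theorem \ref{thm M is dense} by comparing the $H^\infty$- and $H^2$-closures of the set of polynomial multiples of $f$. Write $S = \{pf : p \in \mathbb{C}[z_1, \ldots, z_n]\}$. Since $\mathbb{C}[z_1, \ldots, z_n]$ is a vector space and multiplication by $f$ is linear, $S$ is already a linear subspace, so by the definitions recalled above we have $I_{H^\infty(\D^n)}(f) = \overline{S}^{H^\infty(\D^n)}$ and $I_{H^2(\D^n)}(f) = \overline{S}^{H^2(\D^n)}$. We may assume $f \neq 0$, so that $I_{H^\infty(\D^n)}(f)$ is a nontrivial $M$-ideal and Theorem \ref{thm M is dense} applies, giving
\[
\overline{I_{H^\infty(\D^n)}(f)}^{H^2(\D^n)} = H^2(\D^n).
\]

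The key step is to show the inclusion $I_{H^\infty(\D^n)}(f) \subseteq I_{H^2(\D^n)}(f)$. I would take $g \in I_{H^\infty(\D^n)}(f) = \overline{S}^{H^\infty(\D^n)}$ and pick polynomials $p_k$ with $\|p_k f - g\|_{H^\infty(\D^n)} \to 0$. Because the inclusion $H^\infty(\D^n) \hookrightarrow H^2(\D^n)$ is a contraction, one has $\|p_k f - g\|_{H^2(\D^n)} \leq \|p_k f - g\|_{H^\infty(\D^n)} \to 0$, so $g$ lies in $\overline{S}^{H^2(\D^n)} = I_{H^2(\D^n)}(f)$. Hence $I_{H^\infty(\D^n)}(f) \subseteq I_{H^2(\D^n)}(f)$, and since the right-hand side is closed in $H^2(\D^n)$, taking $H^2$-closures yields $\overline{I_{H^\infty(\D^n)}(f)}^{H^2(\D^n)} \subseteq I_{H^2(\D^n)}(f)$.

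Combining the last display of the first paragraph with this inclusion gives
\[
H^2(\D^n) = \overline{I_{H^\infty(\D^n)}(f)}^{H^2(\D^n)} \subseteq I_{H^2(\D^n)}(f) \subseteq H^2(\D^n),
\]
which is the desired equality. There is no genuine obstacle here; the only point requiring care is to keep the two closures ($H^\infty$ and $H^2$) distinct and to use the contractivity of the embedding $H^\infty(\D^n)\hookrightarrow H^2(\D^n)$ to upgrade $H^\infty$-convergence of the approximants $p_k f$ to $H^2$-convergence, which is exactly what lets the whole $H^\infty$-ideal sit inside the $H^2$-invariant subspace generated by $f$.
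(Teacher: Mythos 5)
Your proof is correct and follows essentially the same route as the paper: both identify $I_{H^2(\D^n)}(f)$ with the $H^2$-closure of $I_{H^\infty(\D^n)}(f)$ (you prove the one inclusion that is actually needed, via contractivity of the embedding $H^\infty(\D^n)\hookrightarrow H^2(\D^n)$) and then invoke Theorem \ref{thm M is dense}. The paper's proof is just a terser version of yours, asserting the closure identity outright and citing the density theorem.
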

\begin{proof}
First, we observe that
\[
I_{H^2(\D^n)}(f) = \overline{I_{H^\infty(\D^n)}(f)}^{H^2(\D^n)}.
\]
Since $I_{H^\infty(\D^n)}(f)$ is an $M$-ideal in $H^\infty(\D^n)$, the result directly follows from Theorem \ref{thm M is dense}.
\end{proof}

If $n = 1$, then a function $f \in H^\infty(\D)$ is referred to as an \textit{outer function} if
\begin{equation}\label{eqn outer fn I}
I_{H^2(\D)}(f) = H^2(\D).
\end{equation}
Consequently, Corollary \ref{c2} concludes that if $I_{H^\infty(\D)}(f)$ is an $M$-ideal, then $f$ is an outer function. 

Note that the outer functions on the disc are zero-free. The notion of outer functions in higher variables is, however, different from the above (see Rudin \cite[page 72]{Rud 57}).

\section{Principal $M$-ideals}\label{sec Principal ideal}

From now on, we shall limit ourselves to single variable as a result of the unavailability of tools in several variables that are to be utilised in the subsequent computations. We begin with the final result, Corollary \ref{c2}, of the previous section. It raises the question of a converse direction: Does the ideal generated by an outer function in $H^\infty(\D)$ always qualify as an $M$-ideal in $H^\infty(\D)$?

The main goal of this section is to prove that for functions in $Z^\infty(\D)$, the converse is true. Recall from Definition \ref{def ZD} that $Z^\infty(\D)$ is the set of all functions $f$ in $H^\infty(\D)$ that have a continuous extension to $Z_\T(f)$, where
\[
Z_\T(f) = \{z\in \T: z\in Z_\T(f_O)\} \qquad (f \in H^\infty(\D)),
\]
and $f_O$ denotes the outer factor of the inner-outer factorization of $f \in H^\infty(\D)$. Note that
\[
\tilde{f}(z) = \lim_{r \raro 1^-}f(rz) \qquad (z \in \T \;a.e.),
\]
is the radial limit extension of $f \in H^\infty(\D)$ to the boundary $\T$. In view of the above definition of $Z^\infty(\D)$, there may exist outer functions $f \in H^\infty(\D)$ and points $z\in Z_\T(f)$ such that both $f(z)\neq 0$ and $\tilde{f}(z)\neq 0$.
The point of considering functions in $Z^\infty(\D)$ is that this does not happen here: let $f\in Z^\infty(\D)$ and let $z \in Z_\T(f)$. Then, for any sequence $\{z_n\}$ converging to $z$, we have $f(z_n) \raro 0$. In particular, the radial limit function satisfies the following:
\[
\tilde{f}(z) = \lim_{r \raro 1^-}f(rz) =0.
\]
If in addition, $f\in Z^\infty(\D)$ is an outer function, then $f(z) = \tilde{f}(z) =0$.

We recall representations of outer functions, which will play a crucial role in our analysis. For each $z \in \D$ and $\theta \in [-\pi, \pi]$, we define (in this context, also recall the Szeg\"{o} kernel on $\D$)
\[
S(z,\theta) = \frac{e^{i\theta}+z}{e^{i\theta}-z}.
\]
It is well known that a function $f \in H^\infty(\D)$ (or $f \in H^2(\D)$) is outer if and only if (see \cite[page 62]{KH}) there exists a real-valued integrable function $k$ on $[-\pi, \pi]$ such that
\begin{equation}\label{eqn outer fn}
f(z)=exp\left(\frac{1}{2\pi}\int_{-\pi}^\pi S(z,\theta)k(\theta)d\theta \right)\qquad (z \in \D).
\end{equation}
Moreover, in this case
\[
k(\theta)=\log|f(e^{i\theta})|,
\]
for all $\theta \in [-\pi, \pi]$ a.e. In what follows, we will assume $Z_\T(f)\neq \emptyset$, otherwise $f$ is invertible and
\[
I_{H^\infty(\D)}(f)=H^\infty(\D).
\]
We need the following lemma:

\begin{lemma}\label{lemma: outer functions}
Let $f \in Z^\infty(\D)$ be an outer function. Then there exists a sequence of open sets $\{A_m\}_{m \geq 1}$ in $\T$ such that the following conditions hold:
\begin{enumerate}
\item $Z_{\T}(f) \subseteq A_m$ for all $m \geq 1$.
\item There exists an open set $U_m \subseteq \T$ such that $\overline{A_{m+1}}\subseteq U_m\subseteq A_m$ for all $m \geq 1$.
\item $\mu{(A_m)}\searrow 0$ as $m \raro \infty$.
\item $|\tilde{f}| \leq e^{-m}$ on $A_m$ for all $m \geq 1$.
\end{enumerate}
\end{lemma}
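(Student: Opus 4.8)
The plan is to exploit the defining feature of $Z^\infty(\D)$---namely the continuous extension of $f$ across its essential zero set---to manufacture genuinely \emph{open} neighborhoods of $Z_\T(f)$ on which the boundary values are uniformly small, and then to shrink these neighborhoods in order to control their measure. Throughout I write $K = Z_\T(f)$ and let $F\colon \D \cup K \to \mathbb{C}$ denote the continuous extension of $f$, so that $F|_\D = f$ and, as recorded in the discussion preceding the lemma, $F|_K \equiv 0$.

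First I would record two structural facts. One, $K$ is closed in $\T$, hence compact: if $z_n \raro z$ with each $z_n$ an essential zero, then any open neighborhood of $z$ eventually contains some $z_n$ and is therefore a neighborhood witnessing the essential-zero condition at $z$. Two---the key quantitative input---for every $\eta > 0$ there is $\rho > 0$ such that $|F(w)| < \eta$ whenever $w \in \D \cup K$ and $\mathrm{dist}(w, K) < \rho$. This is a routine compactness argument: were it false, a sequence $w_n \in \D \cup K$ with $\mathrm{dist}(w_n, K) \raro 0$ but $|F(w_n)| \geq \eta$ would, by compactness of $\overline{\D}$, subconverge to some $w \in K$, contradicting $F(w_n) \raro F(w) = 0$.

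With this in hand I would choose a strictly decreasing sequence $\rho_m \searrow 0$ so that $|F| < e^{-m}$ on $\{w \in \D \cup K : \mathrm{dist}(w,K) < \rho_m\}$, set $\Omega_m = \{z \in \mathbb{C} : \mathrm{dist}(z, K) < \rho_m\}$, and define $A_m = \Omega_m \cap \T$. Condition (1) is immediate since $K \subseteq \Omega_m$. For (4) I would use the radial limit: for a.e.\ $z \in A_m$ the point $z$ lies in the open set $\Omega_m$, so $rz \in \Omega_m \cap \D$ for $r$ near $1$, whence $|f(rz)| < e^{-m}$ and, letting $r \raro 1^-$, $|\tilde f(z)| \leq e^{-m}$. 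The nesting (2) is arranged by the strict monotonicity of the radii: $\overline{A_{m+1}} \subseteq \{\mathrm{dist}(\cdot, K) \leq \rho_{m+1}\} \cap \T$, and inserting any intermediate radius $\rho_{m+1} < \sigma_m < \rho_m$ yields an open set $U_m = \{\mathrm{dist}(\cdot, K) < \sigma_m\} \cap \T$ with $\overline{A_{m+1}} \subseteq U_m \subseteq A_m$.

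The only genuinely nontrivial point is condition (3), the vanishing of the measures. Since $\rho_m \searrow 0$ and $K$ is closed, $\bigcap_m \Omega_m = K$, so $A_m \searrow K$ and $\mu(A_m) \searrow \mu(K)$ by continuity of measure from above. It therefore remains to show $\mu(Z_\T(f)) = 0$. Here I would combine outerness with the construction: because $f$ is outer, $\log|\tilde f| \in L^1(\T)$ and hence $\tilde f \neq 0$ a.e., while step (4) gives $A_m \subseteq \{|\tilde f| \leq e^{-m}\}$ up to a null set; thus $\mu(Z_\T(f)) \leq \mu(A_m) \leq \mu(\{|\tilde f| \leq e^{-m}\}) \raro \mu(\{\tilde f = 0\}) = 0$. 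I expect the main obstacle to be precisely this interface between the everywhere-defined continuous extension $F$ and the only-a.e.-defined radial boundary function $\tilde f$: the bound in (4) and the measure estimate both live on the boundary circle, so one must pass carefully from $|F|$-control on $\Omega_m \cap (\D \cup K)$ to $|\tilde f|$-control on $A_m$ via the radial limit, and it is exactly the membership $f \in Z^\infty(\D)$ that makes this passage---and the openness of the sets $A_m$---possible.
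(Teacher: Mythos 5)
Your proof is correct, and it follows the same overall strategy as the paper's proof --- exploit the continuity of the extension $F$ of $f$ across the compact set $K=Z_\T(f)$, together with $F|_K\equiv 0$, to produce open neighborhoods of $K$ on which the boundary values are uniformly small --- but the execution differs in two genuine ways. First, the neighborhoods themselves: the paper covers $K$ by finitely many balls $B(\alpha_j,\delta_{\alpha_j}^m)$ (pointwise continuity at each $\alpha\in K$ plus a finite subcover) and must arrange, by hand, strict inclusions between closures of successive finite unions in order to secure condition (2); your distance-function sets $\Omega_m=\{\mathrm{dist}(\cdot,K)<\rho_m\}$, with the uniform bound $|F|<e^{-m}$ near $K$ obtained from a sequential-compactness contradiction, make (2) immediate by inserting an intermediate radius $\rho_{m+1}<\sigma_m<\rho_m$. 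Second, and more substantively, the measure decay: the paper proves condition (3) by quoting $\mu(Z_\T(f))=0$ and invoking outer regularity of Lebesgue measure, pre-selecting open sets $C_m\supseteq K$ with $\mu(C_m)\to 0$ and threading the inclusion $A_m\subseteq C_m$ through the entire construction; you instead deduce (3) \emph{a posteriori} from (4), noting that up to a null set $A_m\subseteq\{|\tilde f|\le e^{-m}\}$ and that $\mu\big(\{|\tilde f|\le e^{-m}\}\big)\searrow\mu\big(\{\tilde f=0\}\big)=0$ because $\log|\tilde f|\in L^1(\T)$ for an outer function. Your route thus dispenses with outer regularity altogether and isolates exactly where outerness enters (only to guarantee $\tilde f\neq 0$ a.e.), yielding a shorter and somewhat more transparent argument; the paper's route has the mild advantage that the $A_m$ can be forced inside arbitrarily prescribed open sets of small measure, but only the four stated properties are used later in the paper, so nothing is lost. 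A small bonus of your write-up is that you actually verify that $K$ is closed (hence compact), a fact the paper's proof uses without comment, and you handle the passage from the everywhere-defined extension $F$ to the a.e.-defined radial limit $\tilde f$ exactly as the paper does, by running the radial approach inside $\Omega_m$.
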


\begin{proof}
Consider the representation of the outer function $f$ as in \eqref{eqn outer fn}:
\[
f(z)=exp\left(\frac{1}{2\pi}\int_{-\pi}^\pi S(z,\theta)k(\theta)d\theta \right) \qquad (z \in \D).
\]
By the outer regularity of the Lebesgue measure, there exists a sequence of open sets $\{C_m\}$ in $\mathbb{T}$ such that
\[
Z_{\T}(f) \subseteq C_m,
\]
and
\[
{C_{m+1}}\subseteq C_m,
\]
and
\[
\mu(C_m)\rightarrow \mu(Z_{\T}(f)) = 0.
\]
For all $m \geq 1$. Choose a sequence of positive real numbers $\{\varepsilon_m\}_{m \geq 1}$ such that
\[
 \varepsilon_m<e^{-m} \qquad (m \geq 1).
\]
Since $f$ is continuously extendable to $Z_{\T}(f)$, corresponding to $\varepsilon_1 >0$ and $\alpha \in Z_{\T}(f)$, there exists a $\delta_\alpha^1 > 0$ such that
\[
|f(z)-f(\alpha)| <\varepsilon_1,
\]
for all $z\in \mathbb{D}\cap B(\alpha, \delta_\alpha^1)$, and
\[
\overline{B(\alpha_j, \delta_{\alpha_j}^1)} \cap \T \subsetneqq C_1,
\]
where, for each $z_0 \in \mathbb{C}$ and $\delta > 0$, we write
\[
B(z_0, \delta) = \{z \in \mathbb{C}: |z - z_0| < \delta\}.
\]
Since $f(\alpha) = 0$, it follows that
\[
|f(z)|<\varepsilon_1,
\]
for all $z\in \mathbb{D}\cap B(\alpha, \delta_\alpha^1)$. As $Z_{\T}(f)$ is a compact set, there exist scalars $\{\alpha_j\}_{j=1}^{n_1}$ such that
\[
\{\alpha_j\}_{j=1}^{n_1} \subseteq Z_{\T}(f) \subseteq \bigcup_{j=1}^{n_1} B(\alpha_j, \delta_{\alpha_j}^1) .
\]
Define
\[
B_{\varepsilon_1} = \bigcup_{j=1}^{n_1} B(\alpha_j, \delta_{\alpha_j}^1),
\]
and
\[
A_1 = \T \cap B_{\varepsilon_1}.
\]
Now for $\varepsilon_2 >0$ and for each $\alpha \in Z_{\T}(f)$, there exists a $\delta_\alpha^2 > 0$ such that
\[
|f(z)| <\varepsilon_2,
\]
for all $z \in \D \cap B(\alpha, \delta_{\alpha}^2)$, and
\begin{equation}\label{clos}
\overline{B(\alpha, \delta_{\alpha}^2)} \subsetneqq B_{\varepsilon_1}, \overline{B(\alpha, \delta_{\alpha}^2)}\cap \T \subsetneqq A_1 \cap C_2
\end{equation}
Again, by compactness of $Z_{\T}(f)$, there exist scalars $\{\alpha_j\}_{j=1}^{n_2}$ such that
\[
\{\alpha_j\}_{j=1}^{n_2} \subseteq Z_{\T}(f) \subseteq \bigcup_{j=1}^{n_2} B(\alpha_j, \delta_{\alpha_j}^2).
\]
Similarly, define
\[
B_{\varepsilon_2} = \bigcup_{j=1}^{n_2} B(\alpha_j, \delta_{\alpha_j}^2),
\]
and
\[
A_2 = \T \cap B_{\varepsilon_2}.
\]
Clearly
\[
\overline{B}_{\varepsilon_2}\cap \mathbb{T} =  \bigcup_{j=1}^{n_2} \big( \overline{B(\alpha_j, \delta_{\alpha_j}^2)} \cap \mathbb{T}\big )\subsetneqq B_{\varepsilon_1}\cap \mathbb{T},
\]
where the strict inclusion follows from the inclusions pointed out in (\ref{clos}) and the fact that $B_{\varepsilon_1}$ is open. Therefore
\[
\overline{A}_{2} = \overline{B}_{\varepsilon_2} \cap \T \subsetneqq  {B_{\varepsilon_1}} \cap \T = A_{1}.
\]
Evidently, this process continues for $m >2$. So we get a sequence  $\{A_m\}_{m \geq 1}$ of open sets
such that:

\noindent (1) $Z_{\T}(f) \subseteq A_{m}$, $m\geq 1$, and

\noindent (2) $\overline{A_{m+1}}\subseteq U_m\subseteq A_m$ for all $m \geq 1$,

\noindent which yields conditions (1) and (2). Also
\[
0\leq \mu(A_m)\leq \mu(C_m) \rightarrow 0,
\]
implies (3). We now prove the remaining one, condition (4). Let $m\in \mathbb{N}$ be fixed.
Suppose $\tilde{f}(e^{it})$ exists for some $e^{it}\in A_m$ (that is, $\tilde{f}(e^{it}) \in \mathbb{C}$). Then there exists $\alpha \in Z_{\T}(f)$ such that
\[
e^{it} \in B(\alpha, \delta_\alpha^m).
\]
Since $B(\alpha, \delta_\alpha^m)$ is open, there exists $0<s<1$ such that
\[
re^{it}\in B(\alpha, \delta_\alpha^m)\cap \mathbb{D},
\]
for all $s< r < 1$. Therefore
\[
|f(re^{it})|<\varepsilon_m,
\]
for all $s\leq r<1$, and hence, the radial limit extension function $\tilde f$ satisfies the following property:
\[
|\tilde{f}(e^{it})|<\varepsilon_m.
\]
Since $e^{it}$ is arbitrary, we get
\[
\chi_{A_m}|\tilde{f}|<\varepsilon_m<e^{-m},
\]
which yields (4), and completes the proof of the lemma.
\end{proof}

Now we are ready to prove the main result of this section. Recall that for $f \in H^\infty(\D)$, we denote by $I_{H^\infty(\D)}(f)$ the closed ideal generated by $f$.

\begin{theorem} \label{main1}
Let $f\in Z^\infty(\D)$. Then $I_{H^\infty(\D)}(f)$ is an $M$-ideal in $H^\infty(\D)$ if and only if $f$ is an outer function.
\end{theorem}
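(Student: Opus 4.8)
The forward implication requires no work beyond what is already available: if $I_{H^\infty(\D)}(f)$ is an $M$-ideal, then Corollary \ref{c2} gives $I_{H^2(\D)}(f)=H^2(\D)$, which by \eqref{eqn outer fn I} says precisely that $f$ is outer. So the entire content is the converse, and the plan is to produce, for an outer $f\in Z^\infty(\D)$, a bounded approximate unit inside the closed ideal $J:=I_{H^\infty(\D)}(f)$; once this is done, the implication (iii)$\Rightarrow$(i) of Theorem \ref{hd} instantly yields that $J$ is an $M$-ideal. I may assume $Z_\T(f)\neq\emptyset$ (otherwise $f$ is invertible and $J=H^\infty(\D)$) and, after rescaling, $\|f\|\leq 1$.

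The approximate units will themselves be outer functions built from the sets $\{A_m\}$ of Lemma \ref{lemma: outer functions}, which I take (refining that construction if necessary) to form a neighbourhood basis of $Z_\T(f)$. Since $Z_\T(f)\subseteq A_m$ and $|\tilde f|$ is essentially bounded below near every point of $\T\setminus Z_\T(f)$, a compactness argument gives $\delta_m>0$ with $|\tilde f|\geq\delta_m$ a.e.\ on $\T\setminus A_m$. I then let $u_m$ be the outer function prescribed by \eqref{eqn outer fn} with boundary modulus
\[
|\tilde u_m| = \min\Big(\frac{|\tilde f|}{\delta_m},\,1\Big).
\]
Then $|\tilde u_m|\leq 1$, so $\|u_m\|\leq 1$ and the family is bounded. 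The point of this choice is that $u_m$ lands \emph{manifestly} in the ideal: the outer function $g_m$ with boundary modulus $|\tilde u_m|/|\tilde f|=\min(\delta_m^{-1},|\tilde f|^{-1})\leq\delta_m^{-1}$ is bounded, hence $g_m\in H^\infty(\D)$, and $u_m=f g_m\in f\cdot H^\infty(\D)\subseteq J$. Thus no appeal to any later description of $J$ is needed to guarantee $u_m\in J$.

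It remains to verify $\|u_m h-h\|\to 0$ for every $h\in J$. Because $\{u_m\}$ is bounded and $\{fg:g\in H^\infty(\D)\}$ is dense in $J$, it suffices to treat $h=fg$. Fixing $\varepsilon>0$, I use that $f\in Z^\infty(\D)$ extends continuously and vanishes on $Z_\T(f)$ to find an open $V\supseteq Z_\T(f)$ with $|\tilde f||\tilde g|\leq\varepsilon$ a.e.\ on $V\cap\T$, where then $|(\tilde u_m-1)\tilde f\tilde g|\leq 2\varepsilon$. On $\T\setminus V$ one has $|\tilde u_m|=1$ for all large $m$ (since $A_m\subseteq V$), so the whole estimate reduces to showing that the \emph{argument} of $u_m$ tends to $0$ uniformly there. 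Via \eqref{eqn outer fn} this argument is the conjugate-function integral of $\log|\tilde u_m|$, and $\log|\tilde u_m|$ is supported in $\{|\tilde f|<\delta_m\}\subseteq A_m$ with $\big\|\log|\tilde u_m|\big\|_{L^1}\leq\int_{A_m}\big|\log|\tilde f|\big|\,d\mu\to 0$ by absolute continuity of the integral and $\mu(A_m)\to 0$. For $e^{i\theta_0}\in\T\setminus V$ the conjugate kernel stays bounded over the support $A_m$, which lies at a fixed positive distance from $\T\setminus V$, so $|\arg\tilde u_m|\leq C\big\|\log|\tilde u_m|\big\|_{L^1}\to 0$ uniformly on $\T\setminus V$. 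Combining the two regions yields $\limsup_m\|u_m h-h\|\leq 2\varepsilon$, and letting $\varepsilon\to 0$ gives the approximate-unit identity.

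The decisive and most delicate step is this last one: controlling not merely the modulus but the argument of the outer functions $u_m$ off $Z_\T(f)$, so that $u_m\to 1$ rather than only $|u_m|\to 1$. Indeed the entire design of $|\tilde u_m|$ — equal to $1$ outside the shrinking set $A_m$, with log-modulus of vanishing $L^1$ norm concentrated on $A_m$ — is dictated by the need to make the conjugate-function estimate uniform precisely on the region where $h$ is not already forced to be small by the continuity hypothesis $f\in Z^\infty(\D)$. With the bounded approximate unit in hand, Theorem \ref{hd} concludes that $J$ is an $M$-ideal, completing the converse and hence the theorem.
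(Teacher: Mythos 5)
Your proposal is correct and is essentially the paper's own proof: necessity via Corollary \ref{c2}, and sufficiency by exhibiting a bounded approximate unit of outer multiples of $f$ inside $I_{H^\infty(\D)}(f)$ (then invoking Theorem \ref{hd}), built from the shrinking sets $A_m$ of Lemma \ref{lemma: outer functions} so that the boundary modulus is $1$ off $A_m$, with convergence to $1$ away from $Z_\T(f)$ obtained, exactly as in the paper, by bounding the Herglotz/conjugate kernel at a fixed positive distance from $A_m$ against the vanishing $L^1$-norm of the log-modulus data. The differences are cosmetic: the paper's units $fg_m$ have boundary modulus $|\tilde f|^{\chi_{A_m}}$ rather than your $\min\bigl(|\tilde f|/\delta_m,1\bigr)$, your compactness argument producing the essential lower bound $\delta_m$ replaces the paper's proof-by-contradiction that $k_m=-\chi_{A_m^c}k$ is bounded above, and you verify the approximate-unit identity on the dense subset $fH^\infty(\D)$ where the paper verifies it on $f$ alone.
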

\begin{proof}
The necessary part is already in Corollary \ref{c2} (along with the identity \eqref{eqn outer fn I}). For the sufficient part, assume that $f \in Z^\infty(\D)$ is an outer function. Set
\[
k(\theta):=\log|f(e^{i\theta})|,
\]
for $\theta \in [-\pi, \pi]$ a.e. Then $k$ is a real-valued integrable function, and
\[
f(z)=exp\left(\frac{1}{2\pi}\int_{-\pi}^\pi S(z,\theta)k(\theta)d\theta \right) \qquad (z \in \D),
\]
where $S(z,\theta) = \frac{e^{i\theta}+z}{e^{i\theta}-z}$ (see the discussion at the very start of this section). It is enough to build a bounded approximate unit in $I_{H^\infty(\D)}(f)$ (see Theorem \ref{hd}). We proceed as follows: By Lemma \ref{lemma: outer functions}, we construct a sequence of open sets $\{A_m\}_{m \geq 1}$ in $\T$ such that
\begin{enumerate}
\item $Z_{\T}(f) \subseteq A_m$ for all $m \geq  1$.
\item $\overline{A_{m+1}}\subseteq U_m\subseteq A_m$, $U_m$ open for all $m \geq 1$.
\item $\mu{(A_m)}\searrow 0$ as $m \rightarrow \infty$.
\item $|\tilde f| \leq e^{-m}$, or equivalently
\[
k(\theta) < - m,
\]
a.e. on $A_m$ for all $m \geq 1$.
\end{enumerate}
For each $m \geq 1$, define a real-valued integrable function $k_m$ on $[-\pi, \pi]$ as
\[
k_m = -\chi_{A_m^c} k.
\]
For each $m \geq 1$, we claim that $k_m$ is bounded above a.e. We prove this by contradiction. Fix $m \geq 1$. Assume, for each $N\in \mathbb{N}$, we have that
\[
\mu\left(\{\theta : {k_m(\theta)}>N\}\cap A_m^c\right) >0.
\]
But
\begin{align*}
\mu\left(|\tilde{f}|^{-1}(0, e^{-N})\cap A_{m}^c\right) & = \mu\left( \{\theta : {k(\theta)}<-N \} \cap A_m^c\right)
\\
& = \mu\left( \{\theta : {k_m(\theta)}> N \} \cap A_m^c\right)
\\
& >0.
\end{align*}
Because of the inner regularity of Lebesgue measure, there exists a compact set $B_N$ such that
\[
B_N\subseteq |\tilde{f}|^{-1}(0, e^{-N})\cap A_{m}^c,
\]
and $\mu(B_N)>0$. Since $B_N$ is compact for each $N$, we can find $z_N\in B_N$ and a neighbourhood $N_{z_N}$ of $z_N$ such that $\mu(N_{z_N})=e^{-N}$ and
\[
\mu(N_{z_N}\cap B_N)>0.
\]
To see this, one may consider a cover by a set of open intervals of radius $e^{-N}$ of $B_N$. Then, because $B_N$ is compact, there exists a finite subcover. Since
\[
\mu(B_N) > 0,
\]
there exists at least one interval, say $N_{z_N}$, in the finite sub cover that intersects $B_N$ with a positive measure. Take $z_N$ to be the centre of this interval $N_{z_N}$. Passing onto a subsequence if necessary we can assume that $\{z_N\}$ converges to $z$. Let $\varepsilon>0$ and let $N_z$ be a neighbourhood of $z$. Without any loss of generality, we assume that $N_z$ is centred at $z$. Then
\[
|z_N-z|< \mu(N_z)/4,
\]
for all $N>N_0$. Take $N$ sufficiently large so that $N>N_0$ and
\[
\mu(N_{z_N})<\mu(N_z)/4,
\]
and $e^{-N}<\varepsilon$. Then, for each $w\in N_{z_N}$, we have
\[
\begin{split}
|w-z| & \leq |w-z_N|+|z_N-z|
\\
& <\mu(N_{z_n})/4+\mu(N_z)/4
\\
& < \mu(N_z)/2.
\end{split}
\]
Therefore $N_{z_N}\subseteq N_z$, and hence $N_{z_N}\cap B_N\subseteq N_z$. Since $|\tilde{f}|<e^{-N}<\varepsilon$ a.e on $B_N$, it follows that
\[
\mu(\{|\tilde{f}|^{-1}(0, \varepsilon)\cap N_z\}) \geq \mu(N_{z_N}\cap B_{N_0})>0.
\]
This implies that $A_{m}^c$ has a zero of $f$, which is impossible because $Z_\T(f)\subset A_m$. This proves the claim, and subsequently, we define outer function $g_m$ by (recall the representations of outer functions from \eqref{eqn outer fn})
\[
g_m(z) = \operatorname{exp}\left( \frac{1}{2\pi}\int_{-\pi}^{\pi}S(z, \theta)k_m(\theta) d\theta\right) \qquad (z \in \D).
\]
At this point, we recall that the real part of $S(z, \theta)$ is the Poisson kernel $P(z, \theta)$ (see \cite[page 30]{Hoffman} for more details). If we write $z=re^{i\xi}$, then as $r \rightarrow 1$, we have
\[
\frac{1}{2\pi}\int_{-\pi}^{\pi}\Big[\operatorname{Re}S(re^{i\xi}, \theta)\Big] k_m(\theta)\, d\theta \rightarrow k_m(\xi),
\]
a.e. Now
\begin{align*}
|\tilde{g}_m(e^{i\xi})| & = \left |\lim_{r\rightarrow1}\operatorname{exp}\left(\frac{1}{2\pi}\int_{-\pi}^{\pi}S(re^{i\xi}, \theta)k_m(\theta)d\theta\right)\right|
\\
& = \lim_{r\rightarrow1}\operatorname{exp}\left(\frac{1}{2\pi} \int_{-\pi}^{\pi}P(re^{i\xi}, \theta)k_m(\theta) d\theta\right)
\\
& = \operatorname{exp}\left(\frac{1}{2\pi}\lim_{r\rightarrow1}\int_{-\pi}^{\pi}P(re^{i\xi},\theta) (-\chi_{A_m^c}(e^{i\theta})k(\theta))d\theta\right)
\\
& = \operatorname{exp}\left(-\chi_{A_m^c}(e^{i\xi})k(\xi)\right)
\\
& = \operatorname{exp}\Big(k_m(\xi)\Big),
\end{align*}
a.e. Since $k_m$ is bounded, it follows that $g_m\in H^\infty(\D)$. Again, we compute
\begin{align*}
(fg_m)(re^{i\xi}) & = \operatorname{exp}\left(\frac{1}{2\pi}\int_{-\pi}^{\pi}S(re^{i\xi}, \theta) k(\theta) d\theta\right) \times \operatorname{exp}\left( \frac{1}{2\pi}\int_{-\pi}^{\pi} S(re^{i\xi}, \theta)k_m(\theta) d\theta\right)
\\
& = \operatorname{exp}\left( \frac{1}{2\pi}\int_{-\pi}^{\pi} S(re^{i\xi}, \theta)k(\theta) d\theta\right) \times \operatorname{exp}\left( \frac{1}{2\pi}\int_{-\pi}^{\pi}S(re^{i\xi}, \theta)-\chi_{A_m^c}(e^{i\theta})k(\theta)d\theta\right)
\\
& = \operatorname{exp}\left( \frac{1}{2\pi}\int_{-\pi}^{\pi}S(re^{i\xi}, \theta)\left(k(\theta) -\chi_{A_m^c}(e^{i\theta}) k(\theta)\right) d\theta\right)
\\
& = \operatorname{exp}\left(\frac{1}{2\pi}\int_{-\pi}^{\pi} S(re^{i\xi}, \theta)\chi_{A_m}(e^{i\theta}) k(\theta) d\theta\right).
\end{align*}
We now claim that $\{fg_m\}_{m \geq 1}$ is a bounded approximate unit in $I_{H^\infty(\D)}(f)$. For fixed $m \geq 1$ and $e^{i\xi} \in \T$, we have
\begin{align*}
|\widetilde{(fg_m)}(e^{i\xi})| & = \left |\lim_{r\rightarrow1} \operatorname{exp}\left(\frac{1}{2\pi}\int_{-\pi}^{\pi}S(re^{i\xi}, \theta) \chi_{A_m}(e^{i\theta})k(\theta) d\theta\right)\right |
\\
& = \lim_{r\rightarrow1}\operatorname{exp}\left(\frac{1}{2\pi}\int_{-\pi}^{\pi} P(re^{i\xi}, \theta)\chi_{A_m}(e^{i\theta})k(\theta) d\theta\right)
\\
& =  \operatorname{exp}\left(\frac{1}{2\pi}\lim_{r\rightarrow1}\int_{-\pi}^{\pi} P(re^{i\xi}, \theta)\chi_{A_m}(e^{i\theta})k(\theta) d\theta\right)
\\
& = \operatorname{exp}\left(\chi_{A_m}(e^{i\xi}) k(\xi)\right),
\end{align*}
a.e. But, on one hand
\[
\chi_{A_m}(e^{i\xi})k(\xi) = 0,
\]
for all $e^{i\xi} \in A_m^c$ a.e. and, on the other hand, by property (4) above, we know
\[
\chi_{A_m}(e^{i\xi})k(\xi) \leq -m,
\]
for all $e^{i\xi} \in A_m$ a.e. Therefore
\begin{equation}\label{e 2}
\begin{cases}
|\widetilde{fg}_m(e^{i\xi})| = 1  & \mbox{if } e^{i\xi} \in  A_m^c
\\
|\widetilde{fg}_m(e^{i\xi})| \leq e^{-m} & \mbox{if } e^{i\xi} \in  A_m.
\end{cases}
\end{equation}
We want to show that $fg_m \raro 1$ uniformly on $A_{m_0}^c$ for each $m_0 \geq 1$. To see this, fix $m_0 \geq 1$. We know that $A_m \subsetneqq A_{m_0}$ for all $m > m_0$. Also, by property (2), there exists $M > 0$ such that
\[
|e^{i \theta}-e^{i\xi}|>M,
\]
for all $e^{i \theta} \in A_{m}$ and $e^{i\xi}\in A_{m_0}^c $. Therefore, there exists $\tilde{M} > 0$ such that
\[
|S(r e^{i\xi},\theta)\chi_{A_{m}}(e^{i \theta})|\leq \tilde{M},
\]
for all $e^{i\xi} \in {A_{m_0}^c}$ a.e. and $m>m_0$. Since
\[
\chi_{A_m}(e^{i \theta})|k(\theta)| \longrightarrow 0,
\]
pointwise, and
\[
\chi_{A_m}(e^{i\theta})|k(\theta)|\leq \left|k(\theta)\right|,
\]
on $\T$ a.e. by the dominated convergence theorem, we conclude
\[
\int_{-\pi}^{\pi}\chi_{A_m}(e^{i\theta})|k(\theta)| d\theta \longrightarrow 0.
\]
Consequently, for each $m > m_0$, we have
\[
\begin{split}
\Big|\int_{-\pi}^{\pi}S(re^{i\xi}, \theta)\chi_{A_m}(e^{i \theta})k(\theta)d\theta \Big| & \leq \tilde{M} \int_{-\pi}^{\pi} \chi_{A_m}(e^{i \theta})|k(\theta)|d\theta \longrightarrow 0,
\end{split}
\]
uniformly for all $e^{i\xi} \in A_{m_0}^c$ a.e. Since
\[
\widetilde{(fg_m)} (e^{i\xi})= \lim_{r\rightarrow 1}\operatorname{exp}\left(\frac{1}{2\pi}\int_{-\pi}^{\pi}S(re^{i\xi},
\theta)\chi_{A_m}(e^{i \theta})k(\theta)d\theta\right),
\]
we conclude that
\[
\widetilde{fg_m} \longrightarrow 1,
\]
uniformly on $A_{m_0}^c$ a.e. and hence, the claim follows. Finally, we turn to compute $\|f^2g_m -f\|$. Observe that
\[
\|f^2g_m -f\|  = \operatorname{max}\Big\{\esssup_{A_{m_0}^C}|\widetilde{f^2g}_m -\tilde{f}|, \, \esssup_{A_m} |\widetilde{f^2g}_m -\tilde{f}|\Big\}.
\]
By the fact of uniform convergency above, for sufficiently large $m$, it follows that
\[
\esssup_{A_{m_0}^C}|\widetilde{f^2g}_m -\tilde{f}| \leq \varepsilon.
\]
Therefore, for sufficiently large $m$, property (4) and \eqref{e 2} imply
\[
\esssup_{A_m} |\widetilde{f^2g}_m -\tilde{f}|\leq  e^{-m}(e^{-m}+1),
\]
and hence $\{fg_m\}_{m\geq 1}$ is a bounded approximate unit in $I_{H^\infty(\D)}(f)$.
\end{proof}

We conclude this section by computing the zero set of a specific example. This will be utilized for additional computations in the following section. It is convenient to recall the representations of outer functions from \eqref{eqn outer fn}: Define an integrable function $k$ on $(-\pi, \pi]$ as follows:
\[
k(\theta) =\begin{cases}
-n &\mbox{ if } \theta \in (\frac{1}{(n+1)}, \frac{1}{n}]\cup[\frac{-1}{n},\frac{-1}{(n+1)})
\\
\frac{1}{n} & \mbox{ if } \theta \in [\pi-\frac{1}{2^n}, \pi-\frac{1}{2^n}-\frac{1}{8^n}]
\\
1 & \mbox{ otherwise }.
\end{cases}
\]
Define the outer function corresponding to $k$ as
\begin{equation}\label{eqn def of f for zero}
f(z)=exp\left(\frac{1}{2\pi}\int_{-\pi}^\pi S(z,\theta)k(\theta)d\theta \right) \qquad (z \in \D),
\end{equation}
where $S(z,\theta) = \frac{e^{i\theta}+z}{e^{i\theta}-z}$. We claim that
\[
Z_{\T}(f)=\{1\}.
\]
Indeed, since
\[
|\tilde{f}(e^{i\theta})| = e^{k(\theta)},
\]
for $\theta \in (-\pi, \pi]$ a.e. it follows that
\[
|\tilde f(e^{i\theta})| \geq 1,
\]
for all $\theta \notin (-1/2, 1/2))$. Clearly, for any $z = e^{i\theta}$ such that $\theta \notin (-1/2, 1/2))$, and each neighbourhood $N_z$ of $z$, we have
\[
|\tilde{f}|^{-1}(0, 1/2)\cap N_z =\emptyset.
\]
Therefore
\[
e^{i\theta} \notin Z_{\T}(f),
\]
for all $\theta \notin (-1/2, 1/2))$. Now we consider the case
\[
\theta \in (-1/2, 1/2)) \setminus \{0\}.
\]
There exists an $n \geq 1$ such that
\[
\theta \in \Big(\frac{1}{(n+1)}, \frac{1}{n}\Big] \bigcup \Big[\frac{-1}{n},\frac{-1}{(n+1)}\Big).
\]
Hence
\[
|\tilde f(e^{i\zeta})| = e^{-n},
\]
for all $\zeta \in (\frac{1}{(n+1)}, \frac{1}{n}]\cup[\frac{-1}{n},\frac{-1}{(n+1)})$. For $z=e^{i\theta}$, and $N_z = (\frac{1}{(n+1)}, \frac{1}{n}]\cup[\frac{-1}{n},\frac{-1}{(n+1)})$, and $\varepsilon < e^{-n}$, we get
\[
|\tilde{f}|^{-1}(0, \varepsilon)\cap N_z =\emptyset.
\]
Therefore, we again conclude that $e^{i\theta} \notin Z_{\T}(f)$ for all $\theta \in (-1/2, 1/2)) \setminus \{0\}$, and hence
\[
e^{i\theta} \notin Z_{\T}(f) \qquad (\theta \in [-\pi, \pi) \setminus \{0\}).
\]
Finally, for each $\varepsilon>0$ and neighbourhood $N_1$ of $1$, we can find $n \geq 1$ such that $e^{-n}<\varepsilon$ and $(\frac{1}{(n+1)}, \frac{1}{n}]\cup[\frac{-1}{n},\frac{-1}{(n+1)})\subseteq N_1$. Therefore,
\[
\Big(\frac{1}{(n+1)}, \frac{1}{n} \Big] \bigcup \Big[\frac{-1}{n},\frac{-1}{(n+1)}\Big) \subseteq |\tilde{f}|^{-1}(0, \varepsilon)\cap N_1,
\]
and hence
\[
\mu (|\tilde{f}|^{-1}(0, \varepsilon)\cap N_1)>0.
\]
This completes the proof of the claim that $Z_{\T}(f)=\{1\}$.

\section{Examples}\label{sec; examples}

The objective of this section is to present some direct applications of Theorem \ref{main1} as well as some nontrivial examples of elements in $Z^\infty(\D)$. Additionally, we present exotic examples of $M$-ideals that are singly generated by functions from $H^\infty(\D) \setminus Z^\infty(\D)$. We begin by highlighting, in the context of Theorem \ref{main1}, that the $M$-ideals are more explicit. For instance, given $f \in Z^\infty(\D)$, define
\begin{equation}\label{eqn I(f)}
\cli(f) = \{g \in H^\infty(\D):  Z_{\T}(f) \subseteq Z_{\T}(g) \text{ and } g \text{ has a continuous extension to }Z_{\T}(f)\}.
\end{equation}
Observe that
\[
I_{H^\infty(\D)}(f) \subseteq \cli(f).
\]
Now we assume that $f$ is outer. Pick $g \in H^\infty(\D)$ such that $\tilde{g}|_{Z_{\T}(f)}$ is continuous, and
\[
Z_{\T}(f) \subseteq Z_{\T}(g).
\]
For each open set $U$ containing $Z_{\T}(f)$, there exists a neighborhood $A \subseteq U$ of $Z_{\T}(f)$ such that the bounded approximate unit $\{fg_m\}_{m\geq 1}$ constructed in the proof of Theorem \ref{main1} converges uniformly to $1$ on $A^c$ and
\[
|\tilde{f}|, |\tilde{g}| < \varepsilon,
\]
a.e. on $A$. Since
\[
\widetilde{fg_m}\longrightarrow 1,
\]
uniformly on $A^c$, it follows that
\[
\esssup_{e^{i\theta} \in A^c} |\tilde{g}(e^{i \theta}) (\widetilde{fg_m})(e^{i \theta}) - \tilde{g}(e^{i \theta})| < \varepsilon,
\]
and
\[
\begin{split}
\esssup_{e^{i\theta} \in A} |\tilde{g}(e^{i \theta}) (\widetilde{fg_m})(e^{i \theta}) - \tilde{g}(e^{i \theta})| & < |\tilde{g}(\theta)|\|fg_m+1\| <2\varepsilon,
\end{split}
\]
for sufficiently large $m$. Consequently, $g\in I_{H^\infty(\D)}(f)$. This proves the following result:

\begin{corollary}\label{vanishing set}
If $f\in Z^\infty(\D)$ is an outer function, then
\[
I_{H^\infty(\D)}(f)= \cli(f).
\]
\end{corollary}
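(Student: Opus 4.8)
The plan is to prove the two inclusions $I_{H^\infty(\D)}(f) \subseteq \cli(f)$ and $\cli(f) \subseteq I_{H^\infty(\D)}(f)$ separately, with the bulk of the work residing in the second, which is exactly where the bounded approximate unit constructed in Theorem \ref{main1} enters.

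For the forward inclusion I would start from a generic element $fh$ with $h \in H^\infty(\D)$ and check that it lies in $\cli(f)$, then pass to $H^\infty(\D)$-norm limits. Since $f \in Z^\infty(\D)$ is outer, $f$ vanishes continuously on $Z_\T(f)$ (as recorded after Definition \ref{def ZD}, for such $f$ one has $f(z) = \tilde f(z) = 0$ on $Z_\T(f)$), so every product $fh$ extends continuously by $0$ to $Z_\T(f)$; a uniform limit $g$ of such functions again extends continuously and vanishes on $Z_\T(f)$. Because each point of $Z_\T(f)$ is then a continuity point at which $g$ vanishes, $|\tilde g|$ is small on sets of positive measure in every neighborhood, forcing $Z_\T(f) \subseteq Z_\T(g)$; hence $g \in \cli(f)$.

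For the reverse inclusion, fix $g \in \cli(f)$ and let $\{fg_m\}_{m \geq 1}$ be the bounded approximate unit in $I_{H^\infty(\D)}(f)$ built in the proof of Theorem \ref{main1}, which satisfies $|\widetilde{fg_m}| \leq 1$ and $\widetilde{fg_m} \raro 1$ uniformly on $A_{m_0}^c$ for each $m_0$. I would observe first that $g\, fg_m = f(g g_m) \in I_{H^\infty(\D)}(f)$ since $gg_m \in H^\infty(\D)$; as the ideal is closed, it suffices to show $\|g\, fg_m - g\| \raro 0$. I would estimate this norm on the boundary by splitting $\T$ into $A^c$ and $A$, where $A \supseteq Z_\T(f)$ is one of the neighborhoods $A_{m_0}$ from Lemma \ref{lemma: outer functions}: on $A^c$ one has $|\tilde g|\,|\widetilde{fg_m} - 1| \leq \|g\|\,\sup_{A^c}|\widetilde{fg_m} - 1| \raro 0$ by uniform convergence, while on $A$ one uses $|\tilde g(\widetilde{fg_m} - 1)| \leq |\tilde g|(|\widetilde{fg_m}| + 1) \leq 2|\tilde g|$. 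The key point is that the hypotheses $Z_\T(f) \subseteq Z_\T(g)$ and continuity of $\tilde g$ on $Z_\T(f)$ together force $\tilde g \equiv 0$ on $Z_\T(f)$, so $|\tilde g|$ can be made smaller than any prescribed $\varepsilon$ on a sufficiently small neighborhood $A$ of $Z_\T(f)$. Combining the two estimates yields $\|g\, fg_m - g\| < 3\varepsilon$ for large $m$, completing the inclusion.

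The main obstacle I anticipate is the bookkeeping in the second inclusion: one must choose the neighborhood $A$ small enough that $|\tilde g| < \varepsilon$ on $A$ (possible only because $g$ genuinely vanishes continuously on $Z_\T(f)$, which is why the essential-zero condition $Z_\T(f)\subseteq Z_\T(g)$ cannot be dropped), yet aligned with the family $\{A_m\}$ so that the uniform convergence $\widetilde{fg_m}\raro 1$ on $A^c$ from Theorem \ref{main1} still applies. Coordinating these two requirements and verifying that the $A$-region contribution stays controlled by the uniform bound $|\widetilde{fg_m}| \leq 1$, rather than growing, is the delicate part of the argument.
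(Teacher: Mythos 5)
Your proposal is correct and follows essentially the same route as the paper's proof: the forward inclusion is treated as (nearly) immediate, and the reverse inclusion multiplies $g\in\cli(f)$ by the bounded approximate unit $\{fg_m\}_{m\geq 1}$ from Theorem \ref{main1}, splitting $\T$ into a small neighborhood $A$ of $Z_\T(f)$ — where $|\tilde g|<\varepsilon$ a.e.\ because continuity together with $Z_\T(f)\subseteq Z_\T(g)$ forces $\tilde g$ to vanish on $Z_\T(f)$ — and its complement $A^c$, where $\widetilde{fg_m}\to 1$ uniformly, with the contribution on $A$ controlled by $|\widetilde{fg_m}|\leq 1$. Even the coordination issue you flag at the end is handled in the paper in the same spirit: it asserts that for each open $U\supseteq Z_\T(f)$ one can take the neighborhood $A\subseteq U$ so that the approximate unit still converges uniformly to $1$ on $A^c$.
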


In view of $\mathbb{C}[z] \subseteq Z^\infty(\D)$, the following is immediate from Theorem \ref{main1}:

\begin{corollary}\label{polyn}
Let $p \in \mathbb{C}[z]$ be a polynomial. Then $I_{H^\infty(\D)}(p)$ is an $M$-ideal in $H^\infty(\D)$ if and only if zeros of $p$ lies on $\mathbb{C} \setminus \D$.
\end{corollary}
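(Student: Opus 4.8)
The plan is to reduce the statement to a question purely about outerness and then to the elementary distribution of the zeros of a polynomial. Every polynomial is continuous on $\overline{\D}$, so $\mathbb{C}[z] \subseteq A(\D) \subseteq Z^\infty(\D)$, and Theorem \ref{main1} applies verbatim: $I_{H^\infty(\D)}(p)$ is an $M$-ideal in $H^\infty(\D)$ if and only if $p$ is outer. Writing a nonzero $p$ as $p(z) = c\prod_{j=1}^{n}(z - a_j)$ with $c \neq 0$, it therefore suffices to prove that $p$ is outer precisely when $|a_j| \geq 1$ for every $j$, i.e. when no root lies in $\D$. Conceptually, the inner factor of a polynomial is exactly the finite Blaschke product built from its zeros inside $\D$, with no singular part; the argument below is the way I would make this rigorous.

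For the forward implication I would invoke the fact, recalled earlier, that an outer function on $\D$ is zero-free: if $p$ is outer then $p$ has no zero in $\D$, so every $a_j$ lies in $\mathbb{C}\setminus\D$. For the converse, assume each $|a_j|\geq 1$, so that $p$ is itself zero-free on $\D$. Using the canonical representation \eqref{eqn outer fn} I would form the outer function
\[
O(z) = \exp\Big(\frac{1}{2\pi}\int_{-\pi}^{\pi} S(z,\theta)\log|\tilde p(e^{i\theta})|\,d\theta\Big),
\]
which has the same boundary modulus as $p$ (note $\log|\tilde p|\in L^1(\T)$, since $p$ has only finitely many, logarithmically integrable, boundary zeros). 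The quotient $u = p/O$ is then analytic and zero-free on $\D$ with $|\tilde u| = 1$ a.e. on $\T$, that is, $u$ is inner.

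It remains to show $u$ is a unimodular constant, and this is exactly where the only genuine difficulty — a possible singular inner factor arising from roots on $\T$ — must be dispatched. Evaluating the representation at the origin gives $\log|O(0)| = \frac{1}{2\pi}\int_{-\pi}^{\pi}\log|\tilde p(e^{i\theta})|\,d\theta$, so I would compute this mean via the classical identity $\tfrac{1}{2\pi}\int_{-\pi}^{\pi}\log|e^{i\theta}-a|\,d\theta = \log^{+}|a| := \max(0,\log|a|)$, obtaining
\[
\frac{1}{2\pi}\int_{-\pi}^{\pi}\log|\tilde p(e^{i\theta})|\,d\theta = \log|c| + \sum_{j=1}^{n}\log^{+}|a_j| = \log|c| + \sum_{j=1}^{n}\log|a_j| = \log|p(0)|,
\]
the middle equality using $|a_j|\geq 1$. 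Hence $|u(0)| = |p(0)|/|O(0)| = 1$; since $u$ is inner we have $|u|\leq 1$ on $\D$, and an inner function attaining modulus one at an interior point is forced to be constant, so $p = \lambda O$ is outer. Running the same mean-value computation when some $|a_{j}| < 1$ yields the strict inequality $\frac{1}{2\pi}\int_{-\pi}^{\pi}\log|\tilde p(e^{i\theta})|\,d\theta > \log|p(0)|$, which re-proves the forward direction independently. The main obstacle is thus isolating and eliminating the singular inner part for boundary roots; the degenerate cases ($p$ a nonzero constant, where the assertion is vacuous and $I_{H^\infty(\D)}(p)=H^\infty(\D)$) are immediate.
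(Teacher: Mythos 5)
Your proposal is correct and takes essentially the same route as the paper: the paper likewise deduces the corollary immediately from Theorem \ref{main1} together with the inclusion $\mathbb{C}[z] \subseteq Z^\infty(\D)$, leaving only the equivalence ``$p$ outer $\iff$ no zeros of $p$ in $\D$.'' The sole difference is that the paper treats this last equivalence as standard, whereas you verify it in full via the identity $\frac{1}{2\pi}\int_{-\pi}^{\pi}\log|e^{i\theta}-a|\,d\theta=\log^{+}|a|$ and the maximum modulus principle; that verification is sound, including the handling of roots on $\T$.
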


We now go on to the existence of some nontrivial outer functions in $Z^\infty(\D)$. We provide two classes of examples of different flavors. Recall that the disc algebra $A(\D)$ is the uniform algebra of all continuous functions on the closed disc $\overline{\D}$ that are analytic on the open disc $\D$. In other words (see \eqref{eq disc alg})
\[
A(\D) = H^\infty(\D) \cap C(\overline{\D}).
\]
It is known, as well as evident, that $A(\D) \varsubsetneqq H^\infty(\D)$. In the following, we present an example of an outer function in $Z^\infty(\D)$ that also has points of discontinuity on $\mathbb{T}$. In particular, we prove that $A(\D) \varsubsetneqq Z^\infty(\D)$.

\begin{example}
Consider the singular inner function
\[
s(z)=\exp \left(\frac{z+i}{z-i}\right) \qquad (z \in \D),
\]
and the outer function
\[
h(z)=1-z \qquad (z \in \D).
\]
Since $s$ is an inner function, $1-s$ is an outer function. Therefore, being the product of two outer functions, it follows that
\[
f(z) = (1-s(z))h(z) \qquad (z \in \D),
\]
is an outer function. Observe that
\[
f(z) = (1 - z) \Big(1- \exp \left(\frac{z+i}{z-i}\right) \Big) \qquad (z \in \D),
\]
and hence
\[
Z_{\T}(f) = \{1, -i\}.
\]
Since the support of $\tilde{s}$ is $\{i\}$, it follows that $s$ is analytically extendable to $\mathbb{T}$\textbackslash $\{i\}$. In particular, $1-s$ is continuously extendable to $\{1, -i\}$. Since $s$ is continuous on  $\mathbb{T}$\textbackslash $\{i\}$, it follows that $f$ is continuous on $Z_{\T}(f)$. Therefore $f \in Z^\infty(\D)$. However, since $s$ is discontinuous at $i$ and $h$ does not vanish at the point $i$, we conclude that $f$ is discontinuous at $i$, and subsequently, $f \notin A(\D)$.
\end{example}

Now we turn to the second example.

\begin{example}
We consider the outer function $f$ as constructed in \eqref{eqn def of f for zero}:
\[
f(z)=exp\left(\frac{1}{2\pi}\int_{-\pi}^\pi S(z,\theta)k(\theta)d\theta \right) \qquad (z \in \D),
\]
where
\[
k(\theta) =\begin{cases}
-n &\mbox{ if } \theta \in (\frac{1}{(n+1)}, \frac{1}{n}]\cup[\frac{-1}{n},\frac{-1}{(n+1)})
\\
\frac{1}{n} & \mbox{ if } \theta \in [\pi-\frac{1}{2^n}, \pi-\frac{1}{2^n}-\frac{1}{8^n}]
\\
1 & \mbox{ otherwise }.
\end{cases}
\]
We already know that $Z_{\T}(f)=\{1\}$. Observe, for sufficiently large $n$, we have
\[
\theta\in \left[\frac{-1}{n},\frac{-1}{(n+1)}\right) \cup \left(\frac{1}{(n+1)}, \frac{1}{n}\right],
\]
whenever $\theta$ gets close to $0$. Hence
\[
|\tilde{f}(e^{i\theta})| = e^{k(\theta)} = e^{-n}.
\]
As a result, $\tilde{f}$ is continuous at $1$. Now there exist sequences $\{\theta_m\}$ and $\{\gamma_m\}$ such that
\[
\theta_m \subseteq \left[\pi-\frac{1}{2^m}, \pi-\frac{1}{2^m}-\frac{1}{8^m}\right] \qquad (m \geq 1),
\]
and
\[
k(\gamma_m) =1 \qquad (m \geq 1),
\]
and $\theta_m\rightarrow \pi$, and $\gamma_m\rightarrow \pi$. This implies that $|\tilde{f}|$ is not continuous at $\pi$.
\end{example}

Therefore, there is no dearth of examples of functions in $Z^\infty(\D)$. Moreover, as we have shown above
\[
A(\D) \subsetneqq Z^\infty(\D).
\]
We now present an additional collection of natural yet exotic $M$-ideals in $H^\infty(\D)$.

\begin{theorem}
Let $f\in H^\infty(\D)$ such that $\|f\|=1$. Suppose $|\alpha|=1$ and
\[
\alpha\in \operatorname{ess-ran}f.
\]
Then $I_{H^{\infty}(\D)}(\alpha-f)$ is an $M$-ideal in $H^\infty(\D)$.
\end{theorem}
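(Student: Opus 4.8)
The plan is to verify condition (iii) of Theorem \ref{hd} directly, by exhibiting an explicit bounded approximate unit inside $I_{H^\infty(\D)}(\alpha-f)$; once such a unit is produced, the equivalence (i)$\Leftrightarrow$(iii) immediately yields that the ideal is an $M$-ideal. The whole argument rests on normalizing the geometry so that the generator has nonnegative real part.

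First I would replace $\alpha-f$ by a cleaner generator. Since $|\alpha|=1$, the constant $\alpha$ is invertible in $H^\infty(\D)$, so $I_{H^\infty(\D)}(\alpha-f)=I_{H^\infty(\D)}(G)$, where $G:=\bar\alpha(\alpha-f)=1-\bar\alpha f$. The hypothesis $\|f\|=1$ gives $|f(z)|\le 1$ for every $z\in\D$, whence
\[
\operatorname{Re} G(z)=1-\operatorname{Re}\big(\bar\alpha f(z)\big)\ge 1-|f(z)|\ge 0 \qquad (z\in\D).
\]
Thus $G$ maps $\D$ into the closed right half-plane. (This is also the mechanism that forces $\alpha-f$ to be an outer function, consistent with Corollary \ref{c2}, but outerness is not needed for the present argument.)

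The decisive step is the choice of approximate unit. For $n\ge 1$ set
\[
e_n=\frac{nG}{1+nG}.
\]
Because $\operatorname{Re}(1+nG)\ge 1$, the denominator is zero-free with $|1+nG|\ge 1$ on $\D$, so $\tfrac{n}{1+nG}\in H^\infty(\D)$ and therefore $e_n=G\cdot\tfrac{n}{1+nG}\in I_{H^\infty(\D)}(G)$. The half-plane estimate $|w|^2\le|1+w|^2$ for $\operatorname{Re} w\ge 0$, applied to $w=nG$, gives $\|e_n\|\le 1$, so the family is uniformly bounded. Finally, using $1-e_n=(1+nG)^{-1}$, for any $y=Gh$ with $h\in H^\infty(\D)$ one computes
\[
e_n y-y=-(1-e_n)Gh=-\frac{Gh}{1+nG}=-\frac1n\,e_n h,
\]
so that $\|e_n y-y\|\le\tfrac1n\|h\|\raro 0$. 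Since $\{Gh:h\in H^\infty(\D)\}$ is dense in $I_{H^\infty(\D)}(G)$ by \eqref{eq prin ideal} and $\sup_n\|e_n\|\le 1$, a routine $3\varepsilon$-argument upgrades this to $\|e_n y-y\|\raro 0$ for every $y\in I_{H^\infty(\D)}(G)$. Hence $\{e_n\}$ is a bounded approximate unit, and Theorem \ref{hd} concludes the proof. The hypothesis $\alpha\in\operatorname{ess-ran}f$ enters only to guarantee $\operatorname{ess\,inf}_\T|\tilde G|=0$, so that $G$ is non-invertible and the resulting $M$-ideal is proper.

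I expect the main obstacle to be conceptual rather than computational: the ``obvious'' candidate $e_n=1-(\bar\alpha f)^n$ fails, because $\|f\|=1$ permits $|\tilde f|=1$ on a set of positive measure, on which $(\bar\alpha f)^n$ does not decay and $e_n$ does not tend to $1$. The resolvent-type unit $nG/(1+nG)$ circumvents this entirely: its convergence to $1$ is driven by the half-plane geometry $\operatorname{Re} G\ge 0$ through the uniform bound $|1+nG|\ge 1$, and not by any smallness of $f$ on the boundary, which is precisely why it succeeds where the multiplicative attempt does not.
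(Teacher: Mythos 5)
Your proof is correct, and it takes a genuinely different route from the paper's. Both arguments verify condition (iii) of Theorem \ref{hd}, and both begin with a normalization placing the generator in a favorable region: you pass to $G=\bar\alpha(\alpha-f)=1-\bar\alpha f$, which satisfies $\operatorname{Re}G\ge 0$ on $\D$, whereas the paper (taking $\alpha=1$) passes to $g=\tfrac{1+f}{2}$, noting $1-g=\tfrac12(1-f)$, so that $I_{H^\infty(\D)}(1-f)=I_{H^\infty(\D)}(1-g)$. The approximate units are then built quite differently. The paper uses the multiplicative unit $f_n=1-g^n=(1-g)(1+g+\cdots+g^{n-1})$ and proves $\|(1-g)f_n-(1-g)\|\to 0$ by splitting $\T$ into the set $A$ where $\tilde f$ lies within $\varepsilon$ of $1$ (there $|1-\tilde g|<\varepsilon/2$) and its complement (there $|\tilde g|\le c<1$, since $\tilde g=h\circ\tilde f$ with $h(z)=\tfrac{z+1}{2}$), concluding with the maximum modulus principle; this averaging of $f$ with $1$ is precisely the repair that makes powers work where your rejected candidate $1-(\bar\alpha f)^n$ fails. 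You instead use the resolvent unit $e_n=nG/(1+nG)$, and every estimate you need --- membership $e_n\in I_{H^\infty(\D)}(G)$, the contractive bound $\|e_n\|\le 1$, and $\|e_nGh-Gh\|\le\|h\|/n$ --- follows pointwise on $\D$ from the half-plane inequality $|w|\le|1+w|$ for $\operatorname{Re}w\ge 0$, with no boundary decomposition and no measure-theoretic input. Your route is shorter and isolates a cleaner, more general principle: any $G\in H^\infty(\D)$ whose range lies in the closed right half-plane generates a closed principal ideal with a contractive approximate unit, hence an $M$-ideal; this is the real-positivity/resolvent device of Blecher--Ozawa (cited in the paper as \cite{BO} but not used in its proof). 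What the paper's construction buys in exchange is explicit boundary control of its unit ($|\widetilde{f_n}|$ small near the level set where $\tilde f\approx\alpha$, close to $1$ away from it), in the same style as the units built in Theorem \ref{main1} and reused in Corollary \ref{vanishing set}. Your closing observation is also accurate: the hypothesis $\alpha\in\operatorname{ess-ran}f$ is not needed for the $M$-ideal conclusion itself and serves only to make $G$ non-invertible, hence the ideal proper and the statement nontrivial.
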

\begin{proof}
Without any loss of generality, we may assume that
\[
\alpha = 1 \in \text{ess-range}f.
\]
Fix an $\varepsilon > 0$, and let
\[
A = \{\zeta \in \T : \Tilde{f}(\zeta) \in B_\varepsilon(1)\},
\]
where $B_\varepsilon(1)$ denotes the open ball of radius $\varepsilon$ centered at $1$. Note that
\[
\tilde{f}(A^c) \subseteq \overline{\D} \setminus B_\varepsilon(1).
\]
Now we define a function $g \in H^\infty(\D)$ by
\[
{g} = \frac{1+{f}}{2}.
\]
For each $z \in A$, we have
\[
\begin{split}
|1- {g}(z)| & = \left|1-\frac{1+{f}(z)}{2}\right| = \frac{1}{2}\left|1-{f}(z)\right| < \frac{\varepsilon}{2},
\end{split}
\]
and hence
\[
\tilde{g}(A) \subseteq B_\frac{\varepsilon}{2}(1).
\]
Define $h \in A(\D)$ by
\[
h:=\frac{z+1}{2}.
\]
We have by the triangle inequality that $\|h\| \leq 1$. Moreover, on the boundary $\T$, we have
\[
\begin{split}
|h(e^{i\theta})|^2 & =\frac{2(1+cos(\theta))}{4} =\frac{1+cos(\theta)}{2}.
\end{split}
\]
Therefore $|h(e^{i\theta})|=1$ only when $e^{i\theta} = 1$. By continuity, there exists $c \in (0,1)$ such that
\[
\sup_{z \in \overline{\D} \setminus B_\varepsilon(1)} |h(z)| \leq c.
\]
Since
\[
\tilde{g}= h\circ \Tilde{f},
\]
it follows that
\[
|\tilde{g}| \leq c < 1,
\]
on $A^c$. Observe that
\[
I_{H^\infty(\D)}(1-f) = I_{H^\infty(\D)}(1-g).
\]
Therefore, it is enough to prove that $I_{H^\infty(\D)}(1-g)$ is an $M$-ideal in $H^\infty(\D)$.
To this end, we will construct one approximate identity in $I_{H^\infty(\D)}(1-g)$. For each $n \geq 1$, set
\[
f_n = 1-g^{n}.
\]
Since
\[
1-g^n = (1-g) (1 + g + \cdots + g^{n-1}),
\]
it follows that $f_n \in I_{H^\infty(\D)}(1-g)$. Moreover
\[
(1-g)f_n-(1-g) = -(1-g)g^n.
\]
By using the property of radial limits, we conclude
\[
|(1-g)g^{n}| \leq
\begin{cases}
\frac{\varepsilon}{2} & \text{on } A
\\
2c^{n} & \text{on } A^c.
\end{cases}
\]
By the maximum modules principle, we find that $\{f_n\}_{n \geq 1}$ is a bounded approximate identity in $I_{H^\infty(\D)}(1-g)$, and hence $I_{H^\infty(\D)}(1-g)$ is an $M$-ideal in $H^\infty(\D)$.
\end{proof}

Now we turn to the final example in this section. The class of functions given in the above theorem has the potential for the existence of singly generated $M$-ideal in $H^\infty(\D)$ that may not belong to $Z^\infty(\D)$. Indeed, the following example shows an $M$-ideal in $H^\infty(\D)$ generated by an outer function not belonging to the class $Z^\infty(\D)$.   
 
 
 

\begin{example}\label{examp: ideal not in Z}
Define an integrable function $k$ on $(- \pi, \pi]$ by
\[
k (\theta) =
\begin{cases}
-(\theta^2+1) & \text{if } -\pi <\theta < 0
\\
- \theta & \text{if }  0\leq \theta \leq \pi,
\end{cases}
\]
and consider the corresponding outer function
\[
f(z)=exp\left(\frac{1}{2\pi}\int_{-\pi}^\pi S(z,\theta) k(\theta)d\theta \right)\qquad (z \in \D).
\]
Now, observe that
\[
\begin{split}
1 \in \text{ess-range}|\tilde{f}|,
\end{split}
\]
and hence there exists $\alpha \in \T$ such that 
\[
1\in Z_\T(\alpha - f).
\]
On the other hand, if $\theta \raro 0$ from the left, then
\[
k(\theta) \longrightarrow  -1,
\]
and hence
\[
|\tilde{f}|  \longrightarrow k(\theta) = \frac{1}{e}.
\]
Therefore, $\alpha- f$ is not continuous at $1$ (or on $Z_\T(\alpha - f)$) but, in view of the above theorem, $I_{H^\infty(\D)}(\alpha - f)$ remains an $M$-ideal in $H^\infty(\D)$. 
\end{example}

The examples and results shown in this section indicate that the structure of $M$-ideals in $H^\infty(\D)$ is intricate, even when considering $M$-ideals that are singly generated. 
 

\section{Finitely generated $M$-ideals}\label{sec; finitely gen}

In the preceding sections, we examined representations of ideals that are singly generated by functions from $Z^\infty(\D)$. Specifically, we showed in Theorem \ref{main1} that the ideal $I_{H^\infty(\D)}(f)$ is an $M$-ideal for some function $f \in Z^\infty(\D)$ if and only if $f$ is an outer function. In this section, we consider $M$-ideals in $H^\infty(\D)$ that are finitely generated by functions from $Z^\infty(\D)$. Given $m$ functions $\{f_1, \ldots, f_m\} \subseteq H^\infty(\D)$, denote by $I_{H^\infty(\D)}(f_1, \ldots, f_m)$ the ideal generated by $\{f_1, \ldots, f_m\}$.

Recall the inner-outer factorizations of $H^\infty(\D)$-functions (see \eqref{eqn IO fac}): Given a nonzero function $f \in H^\infty(\D)$, there exist an inner function $f_I$ and an outer function $f_O$ such that
\[
f = f_I f_O,
\]
on $\D$. Moreover, this factorization is unique up to a constant of modulus one.

\begin{theorem}\label{fg}
Let $\{f_1,\ldots, f_m\} \subseteq Z^\infty(\D)$. If $I_{H^\infty(\D)}(f_1, \ldots, f_m)$ is an $M$-ideal in $H^\infty(\D)$, then there exists an outer function $f \in Z^\infty(\D)$ such that
\[
I_{H^\infty(\D)}(f_1, \ldots, f_m) = I_{H^\infty(\D)}(f).
\]
\end{theorem}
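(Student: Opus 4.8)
The plan is to manufacture a single outer generator whose boundary modulus dominates all the given ones. Concretely, I would set $\rho = \sum_{i=1}^m |\tilde{f_i}|$ on $\T$ (assuming, as we may, that the $f_i$ are not all zero) and define the outer function
\[
f(z) = \exp\Big(\frac{1}{2\pi}\int_{-\pi}^{\pi} S(z,\theta)\log\rho(e^{i\theta})\, d\theta\Big) \qquad (z\in\D).
\]
Since each nonzero $f_i$ has $\log|\tilde{f_i}|\in L^1(\T)$, and $\rho \geq |\tilde{f_i}|$ while $\rho$ is bounded above, we get $\log\rho\in L^1(\T)$; hence $f$ is a well-defined outer function with $|\tilde f| = \rho$ and $\|f\|\leq\sum_i\|f_i\|$. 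I would first record two pointwise comparisons on $\T$ that drive the two inclusions: $|\tilde{f_i}|\leq|\tilde f|$ for each $i$, and, by Cauchy--Schwarz, $|\tilde f|^2\leq m\sum_{i=1}^m|\tilde{f_i}|^2$.

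The inclusion $I_{H^\infty(\D)}(f_1,\dots,f_m)\subseteq I_{H^\infty(\D)}(f)$ follows from a divisibility observation that uses neither the $M$-ideal hypothesis nor $Z^\infty(\D)$. Since $f$ and each outer factor $(f_i)_O$ are outer with $|\widetilde{(f_i)_O}|/|\tilde f| = |\tilde{f_i}|/\rho\leq 1$, the ratio $(f_i)_O/f$ is an outer function of boundary modulus at most $1$, hence lies in $H^\infty(\D)$ with norm $\leq 1$; multiplying by the inner factor $(f_i)_I$ shows $h_i := f_i/f\in H^\infty(\D)$. Thus $f_i = f h_i\in I_{H^\infty(\D)}(f)$ for every $i$, and as $I_{H^\infty(\D)}(f)$ is a closed ideal it contains the ideal generated by the $f_i$.

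For the reverse inclusion it suffices to prove $f\in I := I_{H^\infty(\D)}(f_1,\dots,f_m)$, and this is the step where the $M$-ideal assumption is essential. Writing $I = J_P$ for a $p$-set $P$ via Theorem \ref{hd} and replacing $P$ by $P\cap\partial_S H^{\infty}(\D)$ (using the theorem identifying $J_P$ with $J_{P\cap\partial_S H^{\infty}(\D)}$), I may assume $P\subseteq\partial_S H^{\infty}(\D) = \tau(\clm(L^{\infty}(\T)))$. Fix $\vp\in P$ and write $\vp = \tau(\psi)$ with $\psi\in\clm(L^{\infty}(\T))$; because $L^{\infty}(\T)$ is a commutative $C^*$-algebra, $\psi$ is a positive $*$-homomorphism, so $\psi(|h|^2) = |\psi(h)|^2$. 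From $f_i\in I = J_P$ we have $\psi(\tilde{f_i}) = \hat{f_i}(\vp) = 0$, whence the comparison $|\tilde f|^2\leq m\sum_i|\tilde{f_i}|^2$ gives
\[
|\psi(\tilde f)|^2 = \psi(|\tilde f|^2)\leq m\sum_{i=1}^m\psi(|\tilde{f_i}|^2) = m\sum_{i=1}^m|\psi(\tilde{f_i})|^2 = 0.
\]
Therefore $\hat f(\vp) = \psi(\tilde f) = 0$ for all $\vp\in P$, i.e. $f\in J_P = I$, completing $I_{H^\infty(\D)}(f)\subseteq I$ and hence the equality of ideals.

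It remains to check $f\in Z^\infty(\D)$, and I expect this continuity statement to be the main obstacle. First, $|\tilde f| = \rho<\varepsilon$ forces every $|\tilde{f_i}|<\varepsilon$, so $Z_\T(f)\subseteq\bigcap_i Z_\T(f_i)$. Next, fix $z_0\in Z_\T(f)$. Since each $f_i\in Z^\infty(\D)$ extends continuously to $z_0$ with value $0$, approximating radial limits near $z_0$ by interior values shows $|\tilde{f_i}(e^{i\theta})|\to 0$, and hence $\rho(e^{i\theta})\to 0$ (a.e.) as $e^{i\theta}\to z_0$, so $\log\rho\to-\infty$ there. Splitting the Poisson integral $\log|f(z)| = \frac{1}{2\pi}\int_{-\pi}^{\pi} P(z,\theta)\log\rho\, d\theta$ into a small arc about $\theta_0$ and its complement, and exploiting that $P(z,\cdot)$ is an approximate identity as $z\to z_0$ while $\log\rho$ is bounded above and integrable, I would conclude $\log|f(z)|\to-\infty$, i.e. $f(z)\to 0$ as $z\to z_0$. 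This provides the continuous extension of $f$ to $Z_\T(f)$ and so $f\in Z^\infty(\D)$. The delicate point is bounding the complementary part of the Poisson integral uniformly as the arc shrinks; the a.e.\ smallness of $\rho$ near $z_0$, inherited from $f_i\in Z^\infty(\D)$, is precisely what makes this control possible.
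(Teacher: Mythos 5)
Your proposal is correct, and it takes a genuinely different route from the paper's. The paper first uses the analytic-prime property (Theorem \ref{prop1}) to replace each $f_j$ by its outer factor, proves the case $m=2$, and inducts: when $Z_\T(f_1)\cap Z_\T(f_2)=\emptyset$ it combines the approximate units constructed for Theorem \ref{main1} to show the ideal is all of $H^\infty(\D)$, and when the intersection is nonempty it shows $\vp_p+\psi_q-\vp_p\psi_q$ is a bounded approximate unit, identifies $I_{H^\infty(\D)}(f_1,f_2)$ with the space of functions vanishing continuously on $Z_\T(f_1)\cap Z_\T(f_2)$, and finally invokes Fatou's theorem to produce an outer generator lying in $A(\D)$. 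You instead build the generator in one stroke as the outer function $f$ with $|\tilde f|=\sum_i|\tilde{f_i}|$, get $I_{H^\infty(\D)}(f_1,\ldots,f_m)\subseteq I_{H^\infty(\D)}(f)$ by outer-function divisibility alone (no $M$-ideal hypothesis), and get the reverse inclusion by showing $f$ itself lies in the ideal: you write the ideal as $J_Q$ with $Q\subseteq\partial_S H^\infty(\D)$ (Theorem \ref{hd} together with the last theorem of Section \ref{sec p sets}), note via \eqref{eqn: tau M H} that each $\vp\in Q$ comes from a character $\psi$ of the commutative $C^*$-algebra $L^\infty(\T)$, hence a $*$-homomorphism, and conclude $|\hat f(\vp)|^2=\psi(|\tilde f|^2)\le m\sum_i\psi(|\tilde{f_i}|^2)=m\sum_i|\psi(\tilde{f_i})|^2=0$. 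Your route avoids the induction and the case analysis on zero sets, produces an explicit generator valid for all $m$ simultaneously, and isolates exactly where the $M$-ideal hypothesis enters (only in proving $f\in J_Q$); the paper's route buys the sharper structural byproducts that the ideal equals $\cli(f)$, that the generator can be taken in the disc algebra $A(\D)$, and that disjoint zero sets force the ideal to be all of $H^\infty(\D)$. Two small points to make explicit in a final write-up: first discard any $f_i\equiv 0$, so that $\log\rho\in L^1(\T)$ and so that, using $|\tilde{f_i}|>0$ a.e.\ for the remaining generators, the inclusion $Z_\T(f)\subseteq Z_\T(f_i)$ holds as claimed; and in the concluding Poisson-kernel estimate, with $I_\delta=\{e^{i\theta}: |e^{i\theta}-z_0|<\delta\}$ the arc on which $\rho\le m\varepsilon$ a.e., record the bound $\limsup_{z\to z_0}\log|f(z)|\le\tfrac12\log(m\varepsilon)$, which follows because $\frac{1}{2\pi}\int_{I_\delta}P(z,\theta)\,d\theta\to 1$ while the complementary integral is dominated by $\sup_{\theta\in I_\delta^c}P(z,\theta)\cdot\|\log\rho\|_{L^1}\to 0$ as $z\to z_0$.
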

\begin{proof}
Suppose $I_{H^\infty(\D)}(f_1, \ldots, f_m)$ is an $M$-ideal in $H^\infty(\D)$. Let
\[
f_j = f_{Ij} f_{Oj},
\]
is the inner-outer factorization, where $f_{I,j}$ is the inner factor and $f_{O,j}$ is the outer factor of $f_j$ for all $j=1, \ldots, m$. In view of
\[
f_{Ij} f_{Oj} \in I_{H^\infty(\D)}(f_1, \ldots, f_m) \qquad (j=1, \ldots, m),
\]
we know, by Theorem \ref{prop1}, that
\[
f_{Oj} \in I_{H^\infty(\D)}(f_1, \ldots, f_m) \qquad (j=1, \ldots, m),
\]
and hence
\[
I_{H^\infty(\D)}(f_{O1}, \ldots, f_{Om}) \subseteq I_{H^\infty(\D)}(f_1, \ldots, f_m).
\]
By using the inner-outer factorizations of $f_i$'s, the other set inclusion becomes trivial. Then
\[
I_{H^\infty(\D)}(f_{O1}, \ldots, f_{Om}) = I_{H^\infty(\D)}(f_1, \ldots, f_m).
\]
Therefore, without any loss of generality, we may assume that $f_j$'s are outer functions in $H^\infty(\D)$. Our goal is to prove that
\[
I_{H^\infty(\D)}(f_1, \ldots, f_m) = I_{H^\infty(\D)}(f),
\]
for some outer function $f \in Z^\infty(\D)$. We prove the assertion for $m=2$. The complete proof follows similarly by induction on $m$. Therefore, our revised goal is to prove the following fact: Assuming that $I_{H^\infty(\D)}(f_1, f_2)$ is an $M$-ideal in $H^\infty(\D)$ for some $f_1, f_2 \in Z^\infty(\D)$, there exists an outer function $f \in Z^\infty(\D)$ such that
\[
I_{H^\infty(\D)}(f_1, f_2) = I_{H^\infty(\D)}(f).
\]
To this end, first, assume that
\[
Z({f_1}) \cap Z({f_2}) = \emptyset.
\]
Since $f_j \in Z^\infty(\D)$ is outer, by Theorem \ref{main1}, it follows that $I_{H^\infty(\D)}(f_j)$ is an $M$-ideal, $j=1,2$, and hence there exist bounded approximate units, say (by an abuse of notation) $\{\vp_m\}_{m\geq 1}$ and $\{\psi_m\}_{m\geq 1}$ in $I_{H^\infty(\D)}(f_1)$ and $I_{H^\infty(\D)}(f_2)$, respectively, as constructed in the proof of Theorem \ref{main1}. We highlight the following key properties: For any $\varepsilon>0$, there exists a neighborhood $U_1\subseteq \T$ of $Z_{\T}({f_1})$ such that
\begin{equation}\label{eqn: phi 1}
|{\widetilde{\vp_m}} - 1| <\varepsilon,
\end{equation}
on $U_1^c$, and
\begin{equation}\label{eqn: phi 2}
|{\widetilde{\vp_m}}|<\varepsilon,
\end{equation}
on $U_1$ for sufficiently large $m$. Similarly, there exists a neighborhood $U_2\subseteq \T$ of $Z_{\T}({f_2})$ such that $|{\widetilde{\vp_m}} - 1| <\varepsilon$ on $U_1^c$, and $|{\widetilde{\vp_m}}|<\varepsilon$ on $U_2$ for sufficiently large $m$. Since $Z_{\T}(f_1)$ and $Z_{\T}(f_2)$ are compact, without any loss of generality, we may assume that
\[
U_1\cap U_2 =\emptyset.
\]
Therefore, for sufficiently large $p, q \geq 1$, we have
\[
\Big|\frac{\widetilde{\vp}_{p}+\widetilde{\psi}_q}{2} - 1 \Big| < \varepsilon,
\]
on $U_1^c\cup U_2^c = \T$. That is
\[
1- \varepsilon < \Big|\frac{\widetilde{\vp}_{p}+\widetilde{\psi}_q}{2}\Big|,
\]
on $\T$. But
\[
\frac{\widetilde{\vp}_{p}+\widetilde{\psi}_q}{2} \in I_{H^\infty(\D)} (f_1, f_2),
\]
and hence, by Theorem \ref{prop1}, we conclude
\[
I_{H^\infty(\D)} (f_1, f_2) = I_{H^\infty(\D)} (1).
\]
Now we assume that
\[
Z_{\T}(f_1) \cap Z_{\T}(f_2) \neq \emptyset.
\]
For each $p,q \geq 1$, define
\[
\zeta_{p,q} = \vp_p + \psi_q - \vp_p \psi_q.
\]
Then
\[
\zeta_{p,q} \in I_{H^\infty(\D)}(f_1, f_2) \qquad (p,q \geq 1).
\]
We claim that $\{\zeta_{p,q}\}_{p,q \geq 1}$ is a bounded approximate unit in $I_{H^\infty(\D)}(f_1, f_2)$. It is clear that $\{\zeta_{p,q}\}_{p,q \geq 1}$ is a bounded set. It is now enough to prove that
\[
\lim_{p,q} \zeta_{p,q} f_j = f_j,
\]
for all $j=1,2$ (with respect to the lexicographic ordering). Fix $\varepsilon > 0$. Since $\{\vp_m\}_{m\geq 1}$ is a bounded approximate identity in $I_{H^\infty(\D)}(f_1)$, there exists $p_0 \geq 1$ such that
\[
\|f_1- \vp_pf_1\|_\infty \leq \varepsilon \qquad (p \geq p_0).
\]
We compute
\begin{align*}
\|\zeta_{p,q} f_1 - f_1\| & = \| \vp_p f_1 + \psi_q f_1 - \vp_p \psi_q f_1 - f_1\|
\\
& = \|\vp_p f_1 - f_1 + \psi_q f_1 - \vp_p \psi_q f_1 \|
\\
& \leq \|\vp_p f_1 - f_1\| + \|\psi_q\| \|f_1 - \vp_p f_1\|
\\
& \leq \varepsilon(1+ \|\psi_q\|)
\\
& \leq \varepsilon (1+ M),
\end{align*}
for all $p \geq p_0$, where (recall that $\{\psi_q\}_{q \geq 1}$ is a bounded sequence)
\[
M = \sup_{q} \|\psi_q\|.
\]
Assume, without any lose of generality, that $\|\vp_p\|_\infty \leq M$ for all $p \geq 1$. Then, a similar computation as above implies the existence of a number $q_0 \geq1$ such that
\[
\|\zeta_{p,q} f_2 - f_2\| \leq \varepsilon (1+ M),
\]
for all $q \geq q_0$. Hence
\[
\|\zeta_{p,q} f_j - f_j\| \leq \varepsilon (1+ M),
\]
for all $j=1,2$, and $p \geq p_0$ and $q \geq q_0$. Then $\{\zeta_{p,q}\}_{p,q \geq 1}$ is a bounded approximate identity in, and consequently $I_{H^\infty(\D)}(f_1, f_2)$ is an $M$-ideal. It remains to prove that $I_{H^\infty(\D)}(f_1, f_2) = I_{H^\infty(\D)}(f)$  for some outer function $f \in H^\infty(\D)$. First, let us define
\[
\cli = \{h\in H^\infty(\D): Z_{\T}(f_1) \cap Z_{\T}(f_2) \subseteq Z_{\T}(h) \text{ and } h|_{Z_{\T}(f_1) \cap Z_{\T}(f_2)} \text{ is continuous}\}.
\]
We claim that $I_{H^\infty(\D)} (f_1, f_2) = \cli$. Let $h\in \cli$. We want to show that
\[
\lim_{p,q} \widetilde{\zeta_{p,q}} \tilde{h} = \tilde{h}.
\]
Let $\varepsilon>0$. Since $h \in C(Z(f_1) \cap Z(f_2))$, we can find an open set $U\subset \T$ such that
\[
Z(f_1) \cap Z(f_2) \subset U,
\]
and
\[
|\tilde{h}|<\frac{\varepsilon}{4},
\]
on $U$. As in \eqref{eqn: phi 1} and \eqref{eqn: phi 2}, there exist open sets $U_1, U_2 \subseteq \T$ such that $Z(f_j) \subseteq U_j$, $j=1,2$, and
\[
U_1\cap U_2\subseteq U.
\]
Moreover, we have the following properties:
\[
\frac{\varepsilon}{2\|\tilde{h}\|} >
\begin{cases}
|\widetilde{\vp_p}-1|  & \mbox{on } U_1^c
\\
|\widetilde{\vp_p}| & \mbox{on } U_1,
\end{cases}
\]
and
\[
\frac{\varepsilon}{2\|\tilde{h}\|} >
\begin{cases}
|\widetilde{\psi_q}-1| & \mbox{on } U_2^c
\\
|\widetilde{\psi_q}| & \mbox{on } U_2,
\end{cases}
\]
for sufficiently large $p$ and $q$. Also
\[
\|\widetilde{\zeta_{p,q}} \tilde{h} - \tilde{h}\| = \|( \widetilde{\vp_p}\tilde{h}+\widetilde{\psi_q}\tilde{h}-\widetilde{\vp_p}\widetilde{\psi_q}\tilde{h})-\tilde{h}\|,
\]
where, for sufficiently large $p$ and $q$, we have
\begin{align*}
\|( \widetilde{\vp_p}\tilde{h}+\widetilde{\psi_q}\tilde{h}-\widetilde{\vp_p}\widetilde{\psi_q}\tilde{h})-\tilde{h}\|
&
\leq\begin{cases}
4\|\tilde{h}\| & \text{ on } U_1\cap U_2\\
\|\widetilde{\vp_p}\|\|\tilde{h}-\widetilde{\psi_q}\tilde{h}\|+\|\widetilde{\psi_q}\tilde{h}-\tilde{h}\| & \text{ on } U_1\cap U_2^c\\
\|\widetilde{\psi_q}\|\|\tilde{h}-\widetilde{\vp_p}\tilde{h}\|+\|\widetilde{\vp_p}\tilde{h}-\tilde{h}\|& \text{ on } U_1^c\cap U_2\\
\|\widetilde{\psi_q}\|\|\tilde{h}-\widetilde{\vp_p}\tilde{h}\|+\|\widetilde{\vp_p}\tilde{h}-\tilde{h}\|& \text{ on } U_1^c\cap U_2^c\\
\end{cases}   \\
&\leq\begin{cases}
\varepsilon & \text{ on } U_1\cap U_2\\
\frac{\varepsilon}{2\|h\|} \cdot \|\tilde{h}\|+\frac{\varepsilon}{2\|\tilde{h}\|} \cdot \|\tilde{h}\|& \text{ on } U_1\cap U_2^c\\
 \frac{\varepsilon}{2\|\tilde{h}\|} \cdot \|\tilde{h}\|+\frac{\varepsilon}{2\|\tilde{h}\|} \cdot \|\tilde{h}\|& \text{ on } U_1^c\cap U_2\\
\frac{\varepsilon}{2\|\tilde{h}\|} \cdot \|\tilde{h}\|+\frac{\varepsilon}{2\|\tilde{h}\|} \cdot \|\tilde{h}\|& \text{ on } U_1^c\cap U_2^c
\end{cases}
\\
& < \varepsilon,
\end{align*}
and hence
\[
h\in I_{H^\infty(\D)} (f_1, f_2).
\]
On the other hand, it is clear that every function in $I(f_1, f_2)$ vanishes and is continuous on $Z({f_1}) \cap Z({f_2})$. Therefore
\[
I_{H^\infty(\D)} (f_1, f_2) = \cli,
\]
which completes the proof of the claim. Finally, since $f_1$ and $f_2$ are continuous on $Z({f_1}) \cap Z({f_2})$ and since $Z({f_1}) \cap Z({f_2})$ is a closed subset of $\T$ with Lebesgue measure $0$, by a theorem of Fatou, there exists an outer function $f\in A(\D)$ such that $Z_f = Z(f_1) \cap Z(f_2)$. But Corollary \ref{vanishing set} implies
\[
\cli(f)= I(f_1, f_2),
\]
which completes the proof of the result.
\end{proof}

It is perhaps noteworthy to mention that the proof of the above theorem remains valid even when an infinite number of generators are adopted. However, in order to execute the proof in its present form, one needs to have the crucial finite intersection property of compact sets.

It is customary to ask about expanding the representations of $M$-ideals in $H^\infty(\D)$ beyond those that are finitely generated or generated by functions from $Z^\infty(\D)$. However, as mentioned in Section \ref{sec colossal}, the set of all $M$-ideals in $H^\infty(\D)$ is vast, and the complexity seems to be akin to that of the maximal ideal spaces of $H^\infty(\D)$. Strengthening these results is crucial for advancing the theory of $M$-ideals and the theory of bounded analytic functions, as it could potentially lead to the development of new approaches in these fields and their connected ones.

We wrap up this paper with a more specific question. In view of Theorem \ref{prop1}, we know that all $M$-ideals are analytic primes. We also note that the classification of closed prime ideals in $H^\infty(\D)$ is unknown (however, see \cite{Bur, Hart, Iz, II, RM2, RM1}). An intriguing question thus emerges concerning the classification of analytic primes in $H^\infty(\D)$ or a more general uniform algebra.

\vspace{0.2in}

\noindent\textbf{Conflict of interest:} The authors have no conflicts of interest to declare.

\vspace{0.2in}

\noindent\textbf{Data availability statement:} Data sharing is not applicable to this article as no datasets were generated or analyzed during the current study.

\vspace{0.2in}

\noindent\textbf{Acknowledgement:}
The research of the second named author is supported in part by TARE (TAR/2022/000063) by SERB, Department of Science \& Technology (DST), Government of India. The reserach of the third author was supported in part by ISI visiting scientist fellowship and JC Bose National Fellowship of Professor Debashish Goswami (ISI Kolkata).

\end{document}